\documentclass[11pt]{article}

\usepackage[utf8]{inputenc}
\usepackage[T1]{fontenc}
\usepackage{graphicx}
\usepackage{url}
\usepackage{hyperref}
\usepackage{amsfonts}
\usepackage{amsmath}
\usepackage{amssymb}
\usepackage{amsthm}
\usepackage{appendix}
\usepackage{booktabs}
\usepackage{listings}
\usepackage{xcolor}
\usepackage{enumitem}
\usepackage[margin=3.1cm]{geometry}
\usepackage{caption}
\hypersetup{colorlinks=true, linkcolor=blue!50!black, citecolor=green!40!black,
            urlcolor=blue!60!black}

\theoremstyle{plain}
\newtheorem{theorem}{Theorem}[section]
\newtheorem{proposition}[theorem]{Proposition}
\newtheorem{lemma}[theorem]{Lemma}
\newtheorem{corollary}[theorem]{Corollary}
\theoremstyle{definition}
\newtheorem{definition}[theorem]{Definition}
\newtheorem{assumption}[theorem]{Assumption}

\newtheorem{algorithm}[theorem]{Algorithm}
\theoremstyle{remark}
\newtheorem{remark}[theorem]{Remark}

\newcommand{\Rbb}{\mathbb{R}}

\newcommand{\Om}{\Omega}
\newcommand{\Th}{\mathcal{T}_h}
\newcommand{\Vh}{V_h}
\newcommand{\dive}{\operatorname{div}}
\newcommand{\grad}{\nabla}
\newcommand{\Hdiv}{\boldsymbol{H}(\dive,\Om)}
\newcommand{\RT}{\boldsymbol{RT}}
\newcommand{\BDM}{\boldsymbol{BDM}}
\newcommand{\sig}{\boldsymbol{\sigma}}
\newcommand{\tauv}{\boldsymbol{\tau}}
\newcommand{\pv}{\boldsymbol{p}}

\newcommand{\nv}{\boldsymbol{n}}
\newcommand{\xv}{\boldsymbol{x}}
\newcommand{\norm}[1]{\lVert #1 \rVert}
\newcommand{\enorm}[1]{\vert\!\vert\!\vert #1 \vert\!\vert\!\vert}
\newcommand{\ip}[2]{\left( #1 , #2 \right)}
\newcommand{\Vfl}{\boldsymbol{V}}
\DeclareMathOperator{\osc}{osc}

\definecolor{lightgray}{gray}{0.95}
\lstdefinestyle{py}{
  language=Python,
  basicstyle=\ttfamily\footnotesize,
  keywordstyle=\color{blue!60!black},
  commentstyle=\color{green!40!black}\itshape,
  stringstyle=\color{red!60!black},
  backgroundcolor=\color{lightgray},
  frame=lines, framesep=2mm,
  numbers=left, numberstyle=\tiny\color{gray},
  breaklines=true, showstringspaces=false,
  rangeprefix=\#\ , includerangemarker=false,
}

\newif\ifrevisions
\revisionsfalse
\definecolor{dorado}{RGB}{218,165,32}
\definecolor{mygreen}{RGB}{30, 200, 30}
\ifrevisions
  \newcommand{\franz}[1]{\textcolor{dorado}{#1}}
  \newcommand{\zuqi}[1]{\textcolor{red}{#1}}
  \newcommand{\michel}[1]{\textcolor{blue}{#1}}
  \newcommand{\claudio}[1]{\textcolor{mygreen}{#1}}
\else
  \newcommand{\franz}[1]{#1}
  \newcommand{\zuqi}[1]{#1}
  \newcommand{\michel}[1]{#1}
  \newcommand{\claudio}[1]{#1}
\fi

\title{A posteriori error estimation for coarse or distorted meshes}

\author{
Franz Chouly\thanks{Centro de Matem\'atica (CMAT), Universidad de la Rep\'ublica,
  IRL CNRS 2030 IFUMI, 11400 Montevideo, Uruguay,
  \url{fchouly@cmat.edu.uy}}
\and Michel Duprez\thanks{Universit\'e de Strasbourg, CNRS, Inria, ICube, UMR 7357,
  \'equipe MIMESIS, F-67000 Strasbourg, France,
  \url{michel.duprez@inria.fr}}
\and Claudio Lobos\thanks{Departamento de Inform\'atica, Universidad T\'ecnica
  Federico Santa Mar\'ia, Avenida Vicu\~na Mackenna 3939, Santiago, Chile,
  \url{claudio.lobos@usm.cl}}
\and Zuqi Tang\thanks{Univ.\ Lille, Arts et M\'etiers Institute of Technology,
  Centrale Lille, Junia, ULR 2697 -- L2EP, F-59000 Lille, France,
  \url{zuqi.tang@univ-lille.fr}}}

\date{}

\begin{document}

\maketitle

\begin{abstract}
%
Finite element simulations carried out on meshes issued from real-world
geometries --- patient-specific anatomies, scanned industrial parts, fractured
media --- almost never satisfy the shape-regularity assumptions under which
classical a posteriori error analysis is written. Practitioners are therefore
left without a quantitative answer to a very legitimate question: \emph{how
wrong is my simulation when my mesh is coarse and contains badly shaped
elements?}

\franz{In this work, we study to which extent} equilibrated flux estimators
\franz{can }provide such an answer. \franz{Indeed,} they deliver
a guaranteed and fully computable upper bound of the energy-norm discretization
error, with a constant equal to one; and, crucially for the present study, its
derivation uses no shape-regularity, quasi-uniformity or angle condition. The
only geometric ingredient is the Poincar\'e constant on a single cell, which
Payne and Weinberger showed to be at most $h_K/\pi$ on \emph{any}
convex cell $K$, however distorted (\franz{$h_K$ being the cell size}).
Mesh quality affects the local efficiency --- the sharpness of the
estimator --- but never its reliability, so that the failure mode under
distortion is pessimism rather than false confidence.

\franz{We study the case of conforming \michel{Lagrange finite element}
methods\michel{: the analysis is written for general cells in two and three
space dimensions, and the numerical experiments are carried out} with
simplicial \michel{elements} in two dimensions.} We detail two flux reconstructions --- a global
$H(\dive)$ mixed one and the patchwise one (in the spirit of works from Braess and Sch\"oberl or
Ern and Vohral\'ik) --- and we map the geometric quality measures used by the
meshing community onto the constants that do, and do not, appear in the
analysis. The estimator is
then assessed numerically on families of coarse, locally damaged, anisotropic
and globally distorted meshes, against the residual, Zienkiewicz--Zhu\michel{-type
recovery} and
Bank--Weiser estimators, and a duality argument extends the guarantee to a
quantity of interest.

Across the $49$ computations reported here --- including meshes of a
letter glyph produced by two independent meshers, one of which ships with a
zero-area cell --- the guaranteed bound never failed, while the
\michel{recovery} estimator reported \emph{less} error than was committed in
every run on a mesh of the unit square. The Bank--Weiser estimator is sharper
on every well-shaped mesh, but a single needle cell out of two thousand costs
it a factor of $2.6$, and it falls below the true error on the L-shaped domain,
where nothing in its output signals the failure. The two reconstructions of the
guaranteed bound are found to degrade under \emph{different} distortions: the
global one under global anisotropy, the patchwise one under an accumulation of
flattened cells within a single patch.
\end{abstract}

\medskip
\noindent\textbf{Keywords.}
a posteriori error estimation; 
distorted meshes; 
equilibrated fluxes; 
finite element method;
guaranteed error bounds;
mesh quality.
%

\medskip
\noindent\textbf{Mathematics Subject Classification.}
65N15, 65N30, 65N50.



\section{Introduction}
\label{sec:intro}


In an idealised numerical analysis paper, the mesh is a parameter of the method:
it is refined at will, it is shape-regular, and asymptotic statements in \zuqi{mesh size} $h$ are
meaningful. In a large part of \michel{engineering and biomedical practice}, the
situation is reversed: the mesh is a \emph{datum}. It is produced by an automatic
mesher from a segmented medical image, from a CAD model with slivers, or from a
geometry containing thin features and cracks; it is as coarse as the computing
budget allows; and it contains a small (or not so small) number of elements that
any quality criterion would reject. Remeshing may be impossible (the geometry is
not available any more), or undesirable (the mesh carries material data, or is
the output of a registration procedure as in, \emph{e.g.}, \cite{bucki2010,lobos2013}).


The question addressed here is therefore not \emph{how to build a better mesh},
but rather: \emph{given this mesh and this solution, what is the guaranteed
accuracy of the computed quantity?} Any answer that involves an unspecified
generic constant $C$ --- as is the case for the classical explicit residual
estimator --- is of little practical use, and any answer whose validity is
conditioned on a shape-regularity parameter is precisely useless in the regime
we target, since that parameter blows up. What is wanted is a number that is
computable from the data at hand, that is an upper bound with no hidden
constant, and whose derivation does not quietly assume the mesh to be good.


\paragraph{A posteriori error estimation.}
A posteriori error estimation started with
Babu\v{s}ka and Rheinboldt \cite{babuskarheinboldt1978};
the literature is now vast, and we refer to the monographs of Ainsworth and Oden
\cite{ainsworthoden2000}, Verf\"urth \cite{verfurth2013}, Repin
\cite{repin2008}, to the survey of Nochetto, Siebert and Veeser
\cite{nochetto2009} for the adaptive side, and to the lecture notes of
Vohral\'ik \cite{vohralik_notes}, which we follow closely in
Section~\ref{sec:eqflux}. 
  
\franz{Equilibrated \michel{flux}  
  estimators are based on ideas from
        Prager--Synge \cite{pragersynge1947}
and early contributions date back to the pioneering work of          Lad\`eveze
        \cite{ladeveze1983,ladeveze2004}}\michel{; later contributions include} Destuynder--M\'etivet
        \cite{destuynder1999},
        Luce--Wohlmuth \cite{luce2004}, Braess--Sch\"oberl
        \cite{braess2008,braess2009}, Ern--Vohral\'ik
        \cite{ernvohralik2015,ernvohralik2020},
        Carstensen--Merdon \cite{carstensen2013,carstensen2014},
        Becker--Capatina--Luce \cite{becker2016}\michel{, all of which} give a guaranteed bound
        with constant one. Their common root is convex duality and the
        primal--dual gap: see Repin \cite{repin2000,repin2008}, Han
        \cite{han2005}, and, more recently and in a much broader nonsmooth
        setting, Bartels and Kaltenbach \cite{bartels2023}. We come back to this
        reading in Remark~\ref{rem:duality}.
Fully computable bounds of residual type also exist, at the price of a careful
tracking of the interpolation constants: see Veeser and Verf\"urth
\cite{veeser2009} and Carstensen and Merdon
\cite{carstensen2013aspects,carstensen2014}. Those constants still depend on the
element shapes, which is exactly what we want to avoid here.

\paragraph{Finite elements on bad meshes.}
On the a priori side, it is classical since Babu\v{s}ka and Aziz
\cite{babuskaaziz1976} that the \emph{minimum} angle condition can be replaced by
a \emph{maximum} angle condition in 2D, and by suitable variants in 3D
\cite{krizek1992}; anisotropic interpolation theory
\cite{apel1999,formaggia2001,picasso2003} pushes this further. Duprez, Lleras and
Lozinski \cite{duprez2019damage} showed that a \emph{locally damaged} mesh --- a
few arbitrarily distorted cells in an otherwise regular mesh --- does not spoil
the convergence rate, nor irreparably the conditioning. 

\paragraph{Mesh quality measures.}
The meshing community quantifies element quality through geometric or algebraic
measures: aspect ratio, minimum/maximum dihedral angle, (scaled) Jacobian,
condition number of the Jacobian, mean ratio, and their many variants; see the
monograph of Frey and George \cite{freygeorge2008}, the algebraic framework of
Knupp \cite{knupp2001} and the discussion of Shewchuk \cite{shewchuk2002} on which of
those measures actually control interpolation error and conditioning. One
contribution of the present work is to connect these practical measures with the
constants that appear (and, more importantly, with those that do \emph{not}
appear) in the equilibrated flux analysis.

\subsection{Contributions and outline}

\begin{enumerate}[nosep]
  \item We revisit the equilibrated flux framework with an explicit tracking of
        the diffusion parameter $\mu$ and of \emph{every} geometric assumption
        used in the derivation of the upper bound
        (Sections~\ref{sec:setting}--\ref{sec:eqflux}). We isolate the single
        place where geometry enters --- the Poincar\'e constant on a cell,
        Lemma~\ref{lem:poincare} --- and we explain how to keep the bound
        guaranteed on non-convex cells.
  \item We review the geometric quality measures used in meshing and map them
        onto the constants of the analysis (Section~\ref{sec:quality}),
        distinguishing between what affects the \emph{guarantee} (almost
        nothing) and what affects the \emph{efficiency} (a lot). 
  \item We describe two flux reconstructions
        (Sections~\ref{sec:mixed} and \ref{sec:patchwise}): a global mixed 
        \michel{one, in which the flux is sought in a Raviart--Thomas or
        Brezzi--Douglas--Marini subspace of $\Hdiv$ over the whole mesh,}
        expensive but trivial to implement, and the patchwise
        one of Braess--Sch\"oberl / Ern--Vohral\'ik, cheap and parallel, and we
        discuss the practical pitfalls on distorted cells.
        Both are implemented, and Section~\ref{sec:numerics_pw} shows
        that they are not interchangeable under distortion: they lose sharpness
        under \emph{different} geometric defects.
  \item We compare with the residual, Zienkiewicz--Zhu\michel{-type recovery} and Bank--Weiser
        estimators (Section~\ref{sec:comparison}) and quantify how each of them
        degrades with distortion. The comparison with Bank--Weiser
        (Section~\ref{sec:numerics_bw}) is the substantive one: it is the
        sharper estimator on any mesh one would choose, and the more fragile on
        any mesh one would not choose.
  \item We extend the guaranteed bound to a quantity of interest via a duality
        argument (Section~\ref{sec:goal}), and we verify numerically
        that the resulting product bound is also insensitive to an isolated
        degenerate cell (Section~\ref{sec:numerics_goal}).
  \item We report numerical experiments (Section~\ref{sec:numerics}) on
        families of coarse, damaged, anisotropic and globally distorted meshes,
        and on the real meshes mentioned above. All the code is available and
        every table and figure in the paper is regenerated by it.
\end{enumerate}

\paragraph{What is, and what is not, new.}
None of the theory of Section~\ref{sec:eqflux} is new: the hypercircle identity
is due to Prager and Synge \cite{pragersynge1947}, the relaxation to cellwise
equilibration and the patchwise reconstruction are due to Vohral\'ik and
coworkers \cite{vohralik_notes,ernvohralik2015} and to Braess and Sch\"oberl
\cite{braess2008}. What we claim is a change of \emph{reading}. The absence of
any shape-regularity hypothesis in the upper bound is, in that literature, a
technical remark; here it is the object of the study, and we track it
deliberately --- isolating the one geometric ingredient that does appear
(Lemma~\ref{lem:poincare}), stating what breaks when it fails
(Remark~\ref{rem:nonconvex}), and measuring, rather than asserting, how much
sharpness distortion actually costs. The mapping between the quality measures
that meshing software reports and the constants that the analysis contains
(Section~\ref{sec:quality}) is, to our knowledge, not written down anywhere.
The experimental observations of Section~\ref{sec:numerics} --- that
the two reconstructions of the same guaranteed bound degrade under different
defects, that Bank--Weiser is sensitive to a collapsed edge and not to a
collapsed area, and that no single quality measure of
Definition~\ref{def:quality} predicts the local effectivity --- are, likewise,
new.

\paragraph{Scope.}
We restrict ourselves to the Poisson problem with a piecewise constant
coefficient, to conforming Lagrange elements, and --- in the experiments --- to
two space dimensions 
\franz{and low order piecewise linear finite element approximation\michel{.}}
\franz{These choices \michel{are} motivated by the following.}
The phenomenon we want to
isolate is geometric, and every additional feature of the model would make it
harder to attribute an effect to the mesh alone. The consequences of relaxing
each of these restrictions are discussed in Section~\ref{sec:conclusion}. We
also do not address the algebraic error: all linear systems below are solved by
a direct method.

\section{Notation}
\label{sec:setting}

Let $\Om \subset \Rbb^d$, $d \in \{2,3\}$, be a bounded Lipschitz polytopal
domain with boundary $\Gamma := \partial\Om$. For $\omega \subseteq \Om$ we
write $\ip{\cdot}{\cdot}_\omega$ for the $L^2(\omega)$ scalar product (as well as
for its vector-valued counterpart) and $\norm{\cdot}_\omega$ for the associated
norm; the subscript is dropped when $\omega = \Om$. We use the standard Sobolev
spaces $H^1(\Om)$, $V := H^1_0(\Om)$, and
\[
  \Hdiv := \{ \tauv \in [L^2(\Om)]^d \ : \ \dive \tauv \in L^2(\Om) \}.
\]
For $\tauv \in \Hdiv$ and $v \in V$, the Green formula
\begin{equation}\label{eq:green}
  \ip{\tauv}{\grad v}_\Om + \ip{\dive \tauv}{v}_\Om = 0
\end{equation}
holds (the boundary term vanishes since $v$ has zero trace).

\section{Model problem}
\label{sec:model}

Let $\mu > 0$ be a physical parameter, assumed piecewise constant on the mesh
introduced below, and let $f \in L^2(\Om)$. We consider the Poisson problem with
homogeneous Dirichlet boundary conditions:
\begin{equation}\label{eq:strong}
  - \franz{\mathrm{div}\ (\mu \grad u )}= f \quad \text{in } \Om, \qquad u = 0 \quad \text{on } \Gamma .
\end{equation}
Define the bilinear and linear forms
\begin{equation}\label{eq:forms}
  a(v,w) := \ip{\mu\grad v}{\grad w}_\Om, \qquad
  \ell(v) := \ip{f}{v}_\Om, \qquad v,w \in V,
\end{equation}
so that the weak formulation reads: find $u \in V$ such that
\begin{equation}\label{eq:weak}
  a(u,v) = \ell(v)\zuqi{,} \qquad \forall v \in V .
\end{equation}
By Lax--Milgram, \eqref{eq:weak} is well posed. We measure the error in the
\emph{energy norm}
\begin{equation}\label{eq:enorm}
  \enorm{v} := \norm{\mu^{1/2}\grad v}_\Om ,
  \qquad
  \enorm{v}_\omega := \norm{\mu^{1/2}\grad v}_\omega ,
\end{equation}
and we associate with it the dual norm on fluxes,
$\norm{\mu^{-1/2}\tauv}_\omega$. The \emph{exact flux} is
\begin{equation}\label{eq:exactflux}
  \sig := -\mu \grad u \in \Hdiv, \qquad \dive \sig = f \ \text{ in } \Om .
\end{equation}


\section{The mesh}
\label{sec:mesh}

Let $\Th$ be a partition of $\overline{\Om}$ into closed cells $K$ with
$\overline{\Om} = \bigcup_{K \in \Th} K$ and $\mathring{K} \cap \mathring{K'} =
\emptyset$ for $K \neq K'$. We assume only that $\Th$ is
\begin{assumption}[Admissible mesh]\label{ass:mesh}
\leavevmode
\begin{enumerate}[label=(M\arabic*),nosep]
  \item\label{it:M1} \emph{matching}: the intersection of two distinct cells is
        either empty, or a common vertex, edge or face (no hanging node)\footnote{\claudio{In the meshing community this is known as a manifold mesh.}}.
  \item\label{it:M2} each cell $K$ is the image of a fixed reference cell
        $\widehat{K}$ (simplex, quadrilateral/hexahedron, prism, \dots) by a
        smooth, invertible map $F_K : \widehat{K} \to K$ with
        $\det(\mathbb{J}_K) > 0$ everywhere on $\widehat{K}$, where
        $\mathbb{J}_K := \mathrm{D}F_K$;
  \item\label{it:M3} $\mu$ is constant on each $K$, with value $\mu_K$.
\end{enumerate}
\end{assumption}
We stress what is \emph{not} assumed: no shape regularity
($h_K / \rho_K \leq \gamma$, with $\rho_K$ the inradius), no quasi-uniformity
($\max_K h_K / \min_K h_K \leq \gamma$), no minimum nor maximum angle condition,
no bound on the number of cells sharing a vertex, no convexity of the cells. The
mesh may be a simplicial one, a tensor-product one, or a mixed one.

We write $h_K := \operatorname{diam}(K)$, $h := \max_{K\in\Th} h_K$, and
$\mathcal{F}_h$ for the set of faces, split into interior faces
$\mathcal{F}_h^{i}$ and boundary faces $\mathcal{F}_h^{b}$. The set of vertices
is $\mathcal{V}_h = \mathcal{V}_h^{i} \cup \mathcal{V}_h^{b}$ (interior /
boundary). For a vertex $\mathrm{a} \in \mathcal{V}_h$, $\omega_\mathrm{a}$
denotes the patch of cells sharing $\mathrm{a}$ and $\psi_\mathrm{a}$ the
associated $\michel{\mathbb{P}_1}$ hat function ($\operatorname{supp}\psi_\mathrm{a} =
\omega_\mathrm{a}$, $\sum_{\mathrm{a}} \psi_\mathrm{a} \equiv 1$). For a cell
$K$, $\omega_K := \bigcup \{ \omega_\mathrm{a} : \mathrm{a} \in K \}$.

\section{The finite element method}
\label{sec:fem}

Let $k \geq 1$ and let
\begin{equation}\label{eq:Vh}
  \Vh := \{ v_h \in V \ : \ v_h \circ F_K \in \mathbb{P}_k(\widehat{K})
        \ \ \forall K \in \Th \} \subset V
\end{equation}
be the conforming Lagrange finite element space ($\mathbb{Q}_k$ on
tensor-product cells); we refer to \cite{braess2007,ernguermond2021} for the
general theory. The discrete problem reads: find $u_h \in \Vh$ such that
\begin{equation}\label{eq:fem}
  a(u_h, v_h) = \ell(v_h)\zuqi{,} \qquad \forall v_h \in \Vh .
\end{equation}
Conformity $\Vh \subset V$ gives Galerkin orthogonality
\begin{equation}\label{eq:galerkin}
  a(u - u_h, v_h) = 0\zuqi{,} \qquad \forall v_h \in \Vh ,
\end{equation}
which we will use twice: to obtain the Prager--Synge identity, and --- crucially
--- to guarantee the solvability of the local Neumann problems of
Section~\ref{sec:patchwise}.

\begin{remark}[Conformity]
Because $\Vh \subset V$, no \emph{potential} reconstruction is needed and the
framework of \cite{vohralik_notes,ernvohralik2015} simplifies to a flux
reconstruction only. For nonconforming, DG or mixed discretizations, the
estimator gains a nonconformity term; see \cite{ernvohralik2015}.
\end{remark}

\section{Measuring the quality of a mesh}
\label{sec:quality}

This section collects the quality measures that a mesher produces, and confronts
them with the constants of the a posteriori analysis. Throughout,
$\mathbb{J}_K(\widehat\xv) = \mathrm{D}F_K(\widehat\xv) \in \Rbb^{d\times d}$
denotes the Jacobian matrix of the geometric map at
$\widehat\xv \in \widehat{K}$, with singular values
\[\varsigma_1(K) \geq \dots \geq \varsigma_d(K) > 0.\]

\subsection{Geometric and algebraic quality measures}

\begin{definition}[Usual quality measures]\label{def:quality}
For a cell $K$ we consider
\begin{enumerate}[label=(Q\arabic*),nosep]
  \item\label{it:Q1} the \emph{shape-regularity ratio}
        $\gamma_K := h_K / \rho_K$, $\rho_K$ being the diameter of the largest
        ball inscribed in $K$;
  \item\label{it:Q2} the \emph{aspect ratio}
        $\alpha_K := \varsigma_1(K) / \varsigma_d(K)$, i.e.\ the condition number
        of $\mathbb{J}_K$ in the spectral norm \michel{(for triangles we take
        $\widehat{K}$ equilateral here, so that $\alpha_K = 1$ characterises
        the equilateral cell and the value does not depend on the numbering
        of the vertices)};
  \item\label{it:Q3} the \emph{scaled Jacobian}
        \zuqi{
        $s_K := \min_{\widehat\xv}
        \Big[ \det(\mathbb{J}_K(\widehat\xv)) \big/ \prod_{i=1}^d
        \norm{(\mathbb{J}_K(\widehat\xv))_{:,i}}_2 \Big] \in [-1,1]$}
        equal to $1$ \michel{when the columns of $\mathbb{J}_K$ are
        orthogonal} and tending to $0$
        for a flat (sliver, cap, needle) element, and negative for an inverted
        one\michel{, i.e.\ one whose vertices are traversed in the wrong
        orientation (
        \zuqi{ 
        $\det(\mathbb{J}_K) < 0$}), which \ref{it:M2} excludes. For a
        triangle the bracket is the sine of the angle at the vertex onto which
        $F_K$ maps the origin of $\widehat{K}$; to make the measure
        independent of the numbering of the vertices we take, as mesh-quality
        software does, its minimum over the three corners,
        $s_K = \min_i \sin\theta_i(K)$, $\theta_i(K)$ being the angles of $K$.
        This equals $1/\sqrt2 \approx 0.71$ on the right triangles of a
        structured mesh (the value $s_{\min}$ reported for family \ref{it:F1}
        in Section~\ref{sec:numerics}), $\sqrt3/2$ on the equilateral
        triangle, and is of order $t$ for both the needle and the cap of
        family \ref{it:F3}}; 
  \item\label{it:Q4} the \emph{Frobenius condition number}
        $\kappa_K := \tfrac{1}{d}\,\norm{\mathbb{J}_K}_F
        \norm{\mathbb{J}_K^{-1}}_F \geq 1$ and the associated \emph{mean ratio}
        $1/\kappa_K$ \cite{knupp2001};
  \item\label{it:Q5} the extremal angles $\theta_{\min}(K)$,
        $\theta_{\max}(K)$ (dihedral angles in 3D).
\end{enumerate}
\end{definition}

These measures are not equivalent, and the differences matter. In 2D, a
triangle family with $\theta_{\max} \to \pi$ (\emph{cap}) and one with
$\theta_{\min}\to 0$ but $\theta_{\max}$ bounded away from $\pi$
(\emph{needle}) both have $\gamma_K \to \infty$, but only the first one destroys
the $H^1$ interpolation error \cite{babuskaaziz1976,shewchuk2002}. This is a
first hint that the classical shape-regularity ratio \ref{it:Q1} is a crude
proxy.

\subsection{Where does geometry enter the analysis?}
\label{sec:whereGeometry}

Let us anticipate on Section~\ref{sec:eqflux} and list the constants involved.

\begin{enumerate}[label=(C\arabic*),leftmargin=*]
\item \textbf{Poincar\'e constant on a cell.} The upper bound uses
      \[
        \norm{\varphi - \overline{\varphi}_K}_K \leq C_{P,K}\, h_K
        \norm{\grad \varphi}_K , \qquad \varphi \in H^1(K),
      \]
      where $\overline{\varphi}_K$ is the mean value of $\varphi$ over $K$. By
      Payne--Weinberger \cite{payneweinberger1960}, $C_{P,K} \leq 1/\pi$ for
      \emph{any convex} $K$, regardless of its shape --- a needle, a sliver or a
      cap triangle all admit $C_{P,K}=1/\pi$, and no better uniform value is
      available since $1/\pi$ is attained in the limit of a thin slab. This is \emph{the} reason why the
      guaranteed bound survives distortion. Convexity is however essential: for
      a non-convex cell the constant \michel{need not be} $1/\pi$ (see
      Remark~\ref{rem:nonconvex}).
\item \textbf{Cauchy--Schwarz.} Used several times, constant \zuqi{one}, no geometry.
\item \textbf{Local equilibration solvability.} Involves only Galerkin
      orthogonality \eqref{eq:galerkin} and the partition of unity, no geometry.
\item \textbf{Efficiency (lower bound).} The bound
      $\eta_K \lesssim \enorm{u-u_h}_{\omega_K} + \osc_K$ relies on
      bubble-function / discrete stability arguments whose constants \emph{do}
      degenerate with $\gamma_K$, with $\alpha_K$, and with the number of cells
      in a patch \cite{ernvohralik2015,braess2009}.
\item \textbf{Conditioning and a priori error.} Not the subject here, but recall
      that the condition number of the stiffness matrix and the interpolation
      constants degrade with $\alpha_K$ and $\theta_{\max}$
      \cite{shewchuk2002,duprez2019damage}.
\end{enumerate}

The message is summarised in Table~\ref{tab:sensitivity}.

\begin{table}[ht]
\centering
\small
\begin{tabular}{lccc}
\toprule
 & guaranteed & efficiency & cost \\
 & upper bound & (lower bound) & \\
\midrule
Explicit residual        & no (unknown $C$) & degenerates & cheap \\
Fully computable residual \cite{veeser2009,carstensen2014}
                         & yes, $C$ shape-dependent & degenerates & cheap \\
Zienkiewicz--Zhu \cite{zz1987}
                         & no & asympt.\ exact & very cheap \\
                         &    & if regular    &            \\
Bank--Weiser \cite{bankweiser1985,bulle2023}
                         & under saturation & degenerates & moderate \\
\textbf{Equilibrated flux}
                         & \textbf{yes, $C=1$,} & \textbf{degenerates} & \textbf{moderate} \\
                         & \textbf{geometry-free} & & \\
\bottomrule
\end{tabular}
\caption{Sensitivity of the main estimator families to the mesh quality. Only
the \emph{sharpness} of the equilibrated flux estimator is affected by
distortion; its \emph{reliability} is not.}
\label{tab:sensitivity}
\end{table}

\subsection{A quality field to be correlated with the estimator}

For the numerical section we define, for each cell, the vector of quality
indicators
\begin{equation}\label{eq:qualityvector}
  \mathbf{q}_K := \big( \gamma_K,\ \alpha_K,\ s_K,\ \kappa_K,\
                        \theta_{\min}(K),\ \theta_{\max}(K) \big),
\end{equation}
and we will study the empirical correlation between $\mathbf{q}_K$ and the local
effectivity index
\zuqi{
\begin{equation}\label{eq:localeff}
  I_K := \frac{\eta_K}{\enorm{u-u_h}_{\omega_K}} ,
  \qquad \eta_K :=  \eta_K^{\mathrm{F}}+\eta_K^{\mathrm{R}},
\end{equation}
the local indicators $\eta_K^{\mathrm{F}}$, $\eta_K^{\mathrm{R}}$ being defined
in \eqref{eq:etaF}--\eqref{eq:etaR} below. 
}

The claim to be tested is that the
global effectivity index \eqref{eq:Ieff} stays $O(1)$ under distortion for the
equilibrated flux estimator, whereas its residual and Zienkiewicz--Zhu
counterparts drift --- and, in the case of the \michel{latter}, \michel{drifts} \emph{below}
one, which is the dangerous direction.

\section{A posteriori error estimation}
\label{sec:eqflux}

We now come to the a posteriori error estimates. The guiding thread is the
equilibrated flux methodology: it is the only one, among those recalled in
Section~\ref{sec:comparison}, whose upper bound is both fully computable and
free of any assumption on the geometry of the cells.

\subsection{The Prager--Synge identity}
\label{sec:prager}

\begin{definition}[Admissible flux]\label{def:admissible}
A vector field $\pv$ is \emph{admissible} for \eqref{eq:strong} if
\begin{equation}\label{eq:AC}
  \pv \in \Hdiv \quad \text{and} \quad \dive \pv = f \ \text{ in } \Om .
  \tag{AC}
\end{equation}
\end{definition}

\begin{theorem}[Prager--Synge \cite{pragersynge1947}]\label{thm:prager}
Let $u \in V$ solve \eqref{eq:weak}, let $\pv$ satisfy \eqref{eq:AC} and let
$v \in V$ be arbitrary. Then
\begin{equation}\label{eq:prager}
  \norm{\mu^{-1/2}\left(\pv + \mu\grad v\right)}^2
  =
  \norm{\mu^{-1/2}\left(\pv - \sig\right)}^2 + \enorm{u-v}^2 .
\end{equation}
\end{theorem}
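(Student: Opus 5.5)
The plan is to expand the left-hand side of \eqref{eq:prager} around the exact flux and show that the cross term vanishes. Using $\sig = -\mu\grad u$ from \eqref{eq:exactflux}, I write
\[
  \pv + \mu\grad v = (\pv - \sig) + (\sig + \mu\grad v) = (\pv - \sig) - \mu\grad(u - v).
\]
Multiplying by $\mu^{-1/2}$ and expanding the square of the $L^2(\Om)$ norm gives
\[
  \norm{\mu^{-1/2}(\pv - \sig)}^2 + \norm{\mu^{1/2}\grad(u-v)}^2 - 2\ip{\pv - \sig}{\grad(u-v)}_\Om .
\]
Here I use that $\mu^{-1/2}\cdot\mu^{1/2} = 1$ pointwise, and $\mu > 0$ is piecewise constant by \ref{it:M3}. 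The second term is exactly $\enorm{u-v}^2$ by \eqref{eq:enorm}.

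It remains to show that the cross term vanishes, and this is the only substantive step. Set $\tauv := \pv - \sig$. Since $\pv$ satisfies \eqref{eq:AC} and $\sig \in \Hdiv$ with $\dive\sig = f$, we have $\tauv \in \Hdiv$ and $\dive \tauv = 0$. Since $u, v \in V$, also $w := u - v \in V = H^1_0(\Om)$. The Green formula \eqref{eq:green} then gives
\[
  \ip{\tauv}{\grad w}_\Om = -\ip{\dive\tauv}{w}_\Om = 0 .
\]
This kills the cross term and yields \eqref{eq:prager}.

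The one point I would make explicit is that $\sig \in \Hdiv$ with $\dive \sig = f$ holds in the weak sense, as asserted in \eqref{eq:exactflux}. To check it, take $v\in C_c^\infty(\Om)$ in \eqref{eq:weak}: this gives $\ip{\sig}{\grad v}_\Om = -\ip{f}{v}_\Om$, which is precisely the distributional statement $\dive\sig = f \in L^2(\Om)$. Equivalently, the cross term can be handled without appealing to \eqref{eq:exactflux}. Apply \eqref{eq:green} to $\pv$ alone to get $\ip{\pv}{\grad w}_\Om = -\ip{f}{w}_\Om$, and use \eqref{eq:weak} with test function $w$ to get $\ip{\sig}{\grad w}_\Om = -a(u,w) = -\ell(w) = -\ip{f}{w}_\Om$. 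The difference of these two identities is zero.

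No geometric property of the mesh is used anywhere in this argument, consistent with the paper's emphasis.
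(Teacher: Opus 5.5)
Your proof is correct and is essentially the paper's: it uses the same splitting $\pv+\mu\grad v=(\pv-\sig)-\mu\grad(u-v)$, and the cross term is killed by the Green formula. The paper's argument is exactly your ``equivalent'' variant, namely Green applied to $\pv$ alone and \eqref{eq:weak} tested with $w\in V$ for $\sig$; this spares it the separate check, needed in your primary route, that $\sig\in\Hdiv$ with $\dive\sig=f$.
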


\begin{proof}
Write $\pv + \mu\grad v = (\pv - \sig) + \mu\grad(v-u)$, using
$\sig = -\mu\grad u$. Expanding the square,
\[
  \norm{\mu^{-1/2}(\pv + \mu\grad v)}^2
  = \norm{\mu^{-1/2}(\pv-\sig)}^2 + \enorm{u-v}^2
    + 2 \ip{\pv - \sig}{\grad(v-u)} .
\]
Set $w := v-u \in V$. By \eqref{eq:green} and \eqref{eq:AC},
$\ip{\pv}{\grad w} = -\ip{\dive\pv}{w} = -\ip{f}{w}$, while by \eqref{eq:weak},
$\ip{-\sig}{\grad w} = \michel{\ip{\mu\grad u}{\grad w}} = a(u,w) = \ip{f}{w}$. The two
contributions cancel, which is the \emph{orthogonality property} at the heart of
the identity.
\end{proof}

\begin{corollary}[Hypercircle bound]\label{cor:hypercircle}
Taking $v = u_h$ in \eqref{eq:prager} and dropping the first term on the right,
\begin{equation}\label{eq:APEB}
  \enorm{u-u_h} \ \leq \
  \inf_{\pv \text{ satisfying } \eqref{eq:AC}}
  \norm{\mu^{-1/2}\left(\pv + \mu\grad u_h\right)} .
\end{equation}
\end{corollary}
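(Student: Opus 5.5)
The plan is to specialise the Prager--Synge identity of Theorem~\ref{thm:prager} and then minimise over the admissible fluxes. I would fix an arbitrary $\pv$ satisfying \eqref{eq:AC} and take $v = u_h$ in \eqref{eq:prager}. This choice is legitimate because conformity gives $u_h \in \Vh \subset V$, so $u_h$ is an allowed test function in Theorem~\ref{thm:prager}. The identity then reads
\[
  \norm{\mu^{-1/2}(\pv + \mu\grad u_h)}^2
  = \norm{\mu^{-1/2}(\pv - \sig)}^2 + \enorm{u-u_h}^2 .
\]

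Next I would drop the first term on the right-hand side, which is a squared norm and hence nonnegative. It is finite because $\pv \in \Hdiv \subset [L^2(\Om)]^d$, $\sig \in [L^2(\Om)]^d$, and $\mu$ is piecewise constant and positive, so $\mu^{-1/2}$ is bounded. Dropping it gives
\[
  \enorm{u-u_h}^2 \leq \norm{\mu^{-1/2}(\pv + \mu\grad u_h)}^2 .
\]
Taking square roots, both sides being nonnegative, yields $\enorm{u-u_h} \leq \norm{\mu^{-1/2}(\pv + \mu\grad u_h)}$ for every admissible $\pv$.

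Since the left-hand side does not depend on $\pv$, it is a lower bound for the whole set of right-hand sides, and therefore also for their infimum. This is \eqref{eq:APEB}.

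There is no genuine obstacle here. The only point to check is that the admissible set is nonempty, so that the infimum is finite. The exact flux $\sig$ from \eqref{eq:exactflux} lies in $\Hdiv$ and satisfies $\dive\sig = f$, so it is admissible. Substituting $\pv = \sig$ shows that the infimum is attained there with value exactly $\enorm{u-u_h}$, so the bound \eqref{eq:APEB} is in fact an equality. No geometric property of the mesh enters at any step.
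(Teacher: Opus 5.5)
Your proof is correct and follows the paper's route exactly. You specialise the Prager--Synge identity \eqref{eq:prager} at $v = u_h \in V$, discard the nonnegative term $\norm{\mu^{-1/2}(\pv-\sig)}^2$, take square roots, and pass to the infimum over fluxes satisfying \eqref{eq:AC}. Your extra observation is also correct: $\pv=\sig$ is admissible and kills that first term, so the infimum in \eqref{eq:APEB} is attained and the bound is an equality — the paper does not state this and does not need it.
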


Three features of \eqref{eq:APEB} deserve emphasis, and they are the reason for
this work:
\begin{enumerate}[nosep]
  \item the multiplicative constant is \zuqi{one}: this is a true, quantitative bound
        on the error, not an asymptotic statement;
  \item \emph{every} admissible $\pv$ gives a valid bound, so the estimator is a
        \emph{certificate} that can be checked a posteriori;
  \item nothing in the proof involves the mesh --- in fact no mesh is needed at
        all, which suggests that the same machinery applies to meshless or
        immersed methods.
\end{enumerate}

The difficulty is of course that \eqref{eq:AC} is an equality of functions in
$L^2(\Om)$, and building a discrete field that satisfies it \emph{exactly} for a
general $f$ is out of reach in practice. The next subsection removes this
obstruction.

Before doing so, we record the convex-duality reading of \eqref{eq:prager},
which explains \emph{why} the constant is one.

\begin{proposition}[The estimator is a primal--dual gap]\label{prop:gap}
Define the primal and dual (complementary) energies
\begin{equation}\label{eq:energies}
  \mathcal{E}(v) := \tfrac12 a(v,v) - \ell(v), \quad v \in V,
  \qquad
  \mathcal{E}^*(\pv) := -\tfrac12\norm{\mu^{-1/2}\pv}^2, \quad
  \pv \text{ satisfying } \eqref{eq:AC} .
\end{equation}
Then $\min_{v\in V}\mathcal{E}(v) = \mathcal{E}(u)
= \mathcal{E}^*(\sig) = \max_{\pv}\mathcal{E}^*(\pv)$ --- there is no duality
gap --- and, for every $v\in V$ and every admissible $\pv$,
\begin{equation}\label{eq:gap}
  \mathcal{E}(v) - \mathcal{E}^*(\pv)
  = \tfrac12\enorm{u-v}^2 + \tfrac12\norm{\mu^{-1/2}(\pv-\sig)}^2
  = \tfrac12\norm{\mu^{-1/2}(\pv+\mu\grad v)}^2 .
\end{equation}
\end{proposition}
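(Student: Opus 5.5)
The plan is to reduce everything to the Prager--Synge identity (Theorem~\ref{thm:prager}) plus one elementary computation of $\mathcal{E}(v)-\mathcal{E}^*(\pv)$. I would first prove the gap identity \eqref{eq:gap} for arbitrary $v\in V$ and admissible $\pv$. The no-duality-gap statement and the two extremal characterisations then follow as corollaries, because the right-hand side of \eqref{eq:gap} is nonnegative.

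For the second equality in \eqref{eq:gap}, I would expand the square
\[
\tfrac12\norm{\mu^{-1/2}(\pv+\mu\grad v)}^2=\tfrac12\norm{\mu^{-1/2}\pv}^2+\ip{\pv}{\grad v}+\tfrac12 a(v,v).
\]
By the Green formula \eqref{eq:green} and \eqref{eq:AC}, $\ip{\pv}{\grad v}=-\ip{\dive\pv}{v}=-\ell(v)$. The right-hand side is therefore exactly $\tfrac12 a(v,v)-\ell(v)+\tfrac12\norm{\mu^{-1/2}\pv}^2=\mathcal{E}(v)-\mathcal{E}^*(\pv)$. For the first equality in \eqref{eq:gap}, I would multiply \eqref{eq:prager} by $\tfrac12$, which rewrites the same quantity as $\tfrac12\enorm{u-v}^2+\tfrac12\norm{\mu^{-1/2}(\pv-\sig)}^2$. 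This step is the only place where the weak formulation \eqref{eq:weak} enters, and it enters through Theorem~\ref{thm:prager}.

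For the extremal statements, I would note first that $\sig$ is admissible by \eqref{eq:exactflux}. Taking $v=u$ and $\pv=\sig$ in \eqref{eq:gap} makes both terms vanish, so $\mathcal{E}(u)=\mathcal{E}^*(\sig)$. Next, for arbitrary $v$ and $\pv$, \eqref{eq:gap} gives $\mathcal{E}(v)\ge\mathcal{E}^*(\pv)$. Choosing $\pv=\sig$ yields $\mathcal{E}(v)\ge\mathcal{E}^*(\sig)=\mathcal{E}(u)$, so $u$ minimises $\mathcal{E}$. Symmetrically, choosing $v=u$ yields $\mathcal{E}^*(\pv)\le\mathcal{E}(u)=\mathcal{E}^*(\sig)$, so $\sig$ maximises $\mathcal{E}^*$. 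Hence the min and max are attained and the chain of equalities holds.

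I do not expect a genuine obstacle. The only point requiring care is sign bookkeeping: $\mathcal{E}^*$ carries a minus sign, and the convention $\sig=-\mu\grad u$ interacts with the sign in the Green formula. I would check the cross-term computation above against the one in the proof of Theorem~\ref{thm:prager} to make sure the signs are consistent.
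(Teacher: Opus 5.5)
Your proof is correct. It differs from the paper's only in which of the two equalities in \eqref{eq:gap} you compute by hand and which you take from Theorem~\ref{thm:prager}. The paper centres both energies at the exact pair. It uses $a(u,v)=\ell(v)$ to write $\mathcal{E}(v)=\tfrac12\enorm{u-v}^2-\tfrac12\norm{\mu^{1/2}\grad u}^2$, and the Green formula with \eqref{eq:weak} to get $\ip{\mu^{-1}\pv}{\sig}=\ip{f}{u}=\norm{\mu^{1/2}\grad u}^2$, which rewrites $-\mathcal{E}^*(\pv)$ as $\tfrac12\norm{\mu^{-1/2}(\pv-\sig)}^2+\tfrac12\norm{\mu^{1/2}\grad u}^2$. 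Adding the two gives the first equality, and the second is quoted from Prager--Synge. You do the reverse: you expand $\tfrac12\norm{\mu^{-1/2}(\pv+\mu\grad v)}^2$, identify it with $\mathcal{E}(v)-\mathcal{E}^*(\pv)$ using only \eqref{eq:green} and \eqref{eq:AC}, and then quote Prager--Synge for the first equality.

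Your direct step is shorter, and it shows that the identity between the gap and the squared constitutive residual involves neither $u$ nor \eqref{eq:weak}. It holds for every $v\in V$ and every admissible $\pv$ by integration by parts alone, and weak duality follows from it at once, which is exactly the reading of Remark~\ref{rem:duality}. The paper's direct step instead exhibits the Pythagorean splitting around $(u,\sig)$ from the energies themselves. Taken together, the two computations would reprove Theorem~\ref{thm:prager} rather than rely on it. The deduction of the no-gap statement and of the two extremal characterisations from the nonnegativity of \eqref{eq:gap} is the same in both.

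One small correction to your bookkeeping remark: \eqref{eq:weak} also enters through the admissibility of $\sig$, since $\dive\sig=f$ in \eqref{eq:exactflux} is the weak formulation read in strong form.
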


\begin{proof}
Since $a(u,v)=\ell(v)$, one has
$\mathcal{E}(v) = \tfrac12 a(v-u,v-u) - \tfrac12 a(u,u)
= \tfrac12\enorm{u-v}^2 - \tfrac12\norm{\mu^{1/2}\grad u}^2$. On the other hand,
for $\pv$ admissible, \eqref{eq:green} and \eqref{eq:weak} give
$\ip{\mu^{-1}\pv}{\sig} = -\ip{\pv}{\grad u} = \ip{\dive\pv}{u} = \ip{f}{u}
= \norm{\mu^{1/2}\grad u}^2$, whence
\[
  \tfrac12\norm{\mu^{-1/2}(\pv-\sig)}^2
  = \tfrac12\norm{\mu^{-1/2}\pv}^2 - \norm{\mu^{1/2}\grad u}^2
    + \tfrac12\norm{\mu^{-1/2}\sig}^2
  = -\mathcal{E}^*(\pv) - \tfrac12\norm{\mu^{1/2}\grad u}^2 ,
\]
using $\norm{\mu^{-1/2}\sig} = \norm{\mu^{1/2}\grad u}$. Adding the two
displays gives the first equality in \eqref{eq:gap}; the second one is
Theorem~\ref{thm:prager}. Taking $v=u$ and $\pv=\sig$ shows that the gap
vanishes, and non-negativity of \eqref{eq:gap} gives weak duality.
\end{proof}

\begin{remark}[Duality reading of the hypercircle]\label{rem:duality}
Proposition~\ref{prop:gap} says that the squared estimator
$\norm{\mu^{-1/2}(\sig_h+\mu\grad u_h)}^2$ is nothing but \emph{twice the
primal--dual gap} evaluated at the computed pair $(u_h,\sig_h)$: it is a
by-product of weak duality, which holds for any admissible pair and involves no
discretization whatsoever. This is why the constant is one, and this is the
viewpoint --- \emph{a posteriori estimation by duality} --- underlying the
functional error estimates of Repin \cite{repin2000,repin2008} and Han
\cite{han2005}; Bartels and Kaltenbach \cite{bartels2023} have recently
extended it to general convex minimisation, including nonsmooth and degenerate
problems, where the quadratic structure used above is unavailable.
\end{remark}

\subsection{The abstract framework of Vohral\'ik: relaxing the admissibility
            condition}
\label{sec:vohralik}

Following \cite[Chap.~7]{vohralik_notes} and \cite{ernvohralik2015} we only
require equilibration \emph{in the mean on each cell}.

\begin{definition}[Equilibrated flux reconstruction]\label{def:eqflux}
A field $\sig_h$ is an \emph{equilibrated flux reconstruction} if
\begin{equation}\label{eq:EF}
  \sig_h \in \Hdiv
  \quad \text{and} \quad
  \ip{\dive \sig_h - f}{1}_K = 0 \qquad \forall K \in \Th .
  \tag{EF}
\end{equation}
\end{definition}

\begin{lemma}[Poincar\'e on a cell]\label{lem:poincare}
Let $K$ be a \emph{convex} cell and $\varphi \in H^1(K)$ with mean value
$\overline{\varphi}_K$. Then
\begin{equation}\label{eq:poincare}
  \norm{\varphi - \overline{\varphi}_K}_K
  \leq \frac{h_K}{\pi} \norm{\grad\varphi}_K .
\end{equation}
\end{lemma}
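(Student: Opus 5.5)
This is the Payne--Weinberger inequality, and the plan is to follow the classical slicing argument, in the corrected form of Bebendorf. Everything is invariant under translation and scaling, so we may assume $h_K=1$. By density of $C^\infty(\overline{K})$ in $H^1(K)$ (valid since a convex cell is Lipschitz), it suffices to take $\varphi$ smooth. Set $\psi:=\varphi-\overline{\varphi}_K$, so that $\int_K \psi = 0$. The claim then reads $\norm{\psi}_K^2 \le \pi^{-2}\norm{\grad\psi}_K^2$.

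The core step is a \emph{bisection argument}. For any $\varepsilon>0$, we partition $K$ into finitely many convex pieces $K_j$. Each piece carries $\int_{K_j}\psi = 0$ and is thin in all but one direction, meaning it lies within an $\varepsilon$-neighbourhood of a segment of length at most $1$. The construction goes as follows. Given a convex set with $\int\psi=0$, every direction in the plane (respectively, every pair of orthogonal directions in 3D) admits a hyperplane with that normal splitting the set into two convex halves, each with zero mean of $\psi$. By continuity and the intermediate value theorem, rotating the normal by $\pi$ swaps the halves, so we can also choose the hyperplane to control the shape of the halves. Iterating this produces pieces that are $\varepsilon$-thin in $d-1$ directions. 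Since $h_{K_j}\le h_K=1$ and the zero-mean conditions are additive, it suffices to prove
\[
\norm{\psi}_{K_j}^2 \le \Big(\frac1\pi+o(1)\Big)^2\norm{\grad\psi}_{K_j}^2
\]
for each piece and sum.

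On a thin piece, we parametrise by the long axis $x\in[0,L]$, $L\le1$, with cross-sectional measure $\rho(x)$. Convexity of $K_j$ and the Brunn--Minkowski inequality make $\rho^{1/(d-1)}$ concave. Up to $o(1)$ errors from the thinness and the smoothness of $\psi$, the problem reduces to the weighted one-dimensional inequality
\[
\int_0^L \rho\, g^2\,dx \le \frac{L^2}{\pi^2}\int_0^L \rho\,(g')^2\,dx,
\qquad \int_0^L \rho\, g\,dx = 0,
\]
for $\rho$ with $\rho^{1/(d-1)}$ concave, and in particular log-concave. This weighted inequality is the Payne--Weinberger lemma. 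We would prove it via the Sturm--Liouville eigenproblem $-(\rho g')' = \lambda\rho g$ with Neumann conditions. For the first nonconstant eigenfunction $g$, the substitution $g=w\,\rho^{-1/2}$ (or alternatively a Prüfer-angle argument) together with $(\log\rho)''\le 0$ shows $\lambda\ge\pi^2/L^2$. Equivalently, on the nodal intervals of $g$ we apply a comparison showing that a log-concave weight cannot lower the first Neumann eigenvalue below that of the constant weight.

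We expect the main obstacle to be the rigorous control of the $o(1)$ errors in the slicing step. The cells $K_j$ need not be uniformly thin in the needed sense, and degenerate pieces near the boundary must be handled. This is exactly where Payne--Weinberger's original 2D argument needed repair, for which Bebendorf is the standard reference. For the paper's purposes the lemma is a classical result, so we would present the reduction and the one-dimensional weighted eigenvalue bound, and cite \cite{payneweinberger1960} for the remaining technical details. We would also note that sharpness follows from the thin slab $[0,1]\times[0,\delta]^{d-1}$ with $\varphi=\cos(\pi x_1)$.
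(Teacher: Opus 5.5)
The paper proves this lemma by a bare citation of \cite{payneweinberger1960}, so your closing citation already matches it. The problem is that both steps you say you would write out are misstated. First, the splitting claim is false. For a \emph{prescribed} normal $\nv$, the function $F(t):=\int_{K\cap\{\xv\cdot\nv<t\}}\psi$ vanishes at both ends of its range, but nothing forces an interior zero. Take $\psi=x_1-\overline{x_1}$, with $\overline{x_1}$ the mean of $x_1$ over $K$, and $\nv=e_1$. Then $F'(t)=(t-\overline{x_1})\rho(t)$, so $F<0$ strictly between the two supporting hyperplanes, and no hyperplane with that normal splits $\psi$ into two zero-mean halves. The intermediate-value argument you then invoke applies only to a one-parameter family of hyperplanes that is \emph{rotated} (all through a fixed point, or all bisecting the volume), the two halves being exchanged after a half-turn. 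The direction is therefore an output of the argument, not an input, and thinness has to come from the choice of the family. In 2D volume bisection does this, since a planar convex set of width $w$ and diameter $D$ has area at least $wD/2$. In 3D, small volume only forces thinness in one direction (a slab has small volume), so pieces thin in $d-1$ directions require a genuinely different cutting scheme. Your phrase ``every pair of orthogonal directions'' does not describe one.

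Second, the one-dimensional bound does not follow from the substitution you describe. Put $\phi:=\tfrac12\log\rho$ and $w:=\sqrt\rho\,g$. Then $\int_0^L\rho g'^2=\int_0^L w'^2+\int_0^L(\phi''+\phi'^2)w^2-[\phi'w^2]_0^L$. This is a Robin problem in which $\phi''\le0$ \emph{lowers} the energy, so log-concavity enters with the wrong sign. The working version differentiates the eigenvalue equation first. The function $v:=g'$ vanishes at both endpoints and satisfies $(\rho v')'+\rho(\log\rho)''v+\lambda\rho v=0$. Test with $v$ and set $z:=\sqrt\rho\,v$, so that $z(0)=z(L)=0$; this gives
\[
  \lambda\int_0^L z^2=\int_0^L z'^2+\int_0^L(\phi'^2-\phi'')z^2
  \ \ge\ \int_0^L z'^2\ \ge\ \frac{\pi^2}{L^2}\int_0^L z^2 .
\]
Here $\phi''\le0$ helps, and the last step is the Dirichlet Wirtinger inequality. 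The Pr\"ufer-angle alternative you mention also works.
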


\begin{proof}
This is the Payne--Weinberger inequality \cite{payneweinberger1960}; the
constant $h_K/\pi$ is optimal (attained in the limit of a thin
rectangle/cylinder) and depends on $K$ only through its diameter.
\end{proof}

\begin{remark}[Non-convex cells]\label{rem:nonconvex}
Inequality \eqref{eq:poincare} may fail on a non-convex cell. Three remedies keep
the bound guaranteed: (i) split each non-convex cell into convex sub-cells for
the purpose of evaluating $\eta_K^{\mathrm{R}}$ only, and use $h_{K'}/\pi$ on
each sub-cell $K'$ --- this only \emph{improves} the estimate since
$h_{K'}\le h_K$; (ii) use the constant $C_{P,K}h_K$ with $C_{P,K}$ computed from
a local eigenvalue problem (a small, purely local computation); (iii) \michel{when
$f$ is a piecewise polynomial of degree at most $p$,} make the
term vanish by using a reconstruction with $\dive\sig_h = \Pi_p f$\michel{, see
Proposition~\ref{prop:oscillation}; for a general $f$ the data oscillation
\eqref{eq:osc} remains and still involves the Poincar\'e constant of the
cell}. \michel{In our numerical experiments all cells are triangles, hence
convex, and Lemma~\ref{lem:poincare} applies directly; the reconstructions
used there satisfy $\dive\sig_h = \Pi_p f$ exactly, so that
$\eta_K^{\mathrm{R}}$ reduces to the data oscillation \eqref{eq:osc}.}
\end{remark}

We now define the two families of local indicators, for each $K\in\Th$:
\zuqi{
\begin{align}
  \eta_K^{\mathrm{R}}
    &:= \frac{h_K}{\pi}\, \mu_K^{-1/2}
        \norm{f - \dive \sig_h}_K
    && \text{(residual, or equilibration defect, indicator)} \label{eq:etaR}\\
 \eta_K^{\mathrm{F}}
    &:= \norm{\mu^{-1/2}\left(\sig_h + \mu \grad u_h\right)}_K
    && \text{(flux, or constitutive relation, indicator)}.
    \label{eq:etaF}
\end{align}
}
Both are computable once $u_h$ and $\sig_h$ are at hand: they involve no unknown
constant and no reference to the exact solution.

\begin{theorem}[Guaranteed upper bound]\label{thm:upper}
Let $u$ solve \eqref{eq:weak}, $u_h$ solve \eqref{eq:fem}, and let $\sig_h$
satisfy \eqref{eq:EF}. Assume the cells are convex (see
Remark~\ref{rem:nonconvex} otherwise). Then
\begin{equation}\label{eq:upper}
  \enorm{u-u_h}^2
  \ \leq \
  \sum_{K\in\Th}
  \left( \eta_K^{\mathrm{R}} + \eta_K^{\mathrm{F}} \right)^2
  =: \eta^2 .
\end{equation}
\end{theorem}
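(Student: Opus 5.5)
The plan is to use the dual characterisation of the energy norm,
\[
  \enorm{u-u_h} = \sup_{v\in V,\ \enorm{v}=1} a(u-u_h,v),
\]
and to bound $a(u-u_h,v)$ cellwise for a fixed $v\in V$ with $\enorm{v}=1$. First, by \eqref{eq:weak}, $a(u-u_h,v) = \ip{f}{v} - \ip{\mu\grad u_h}{\grad v}$. Since $\sig_h\in\Hdiv$, the Green formula \eqref{eq:green} gives $\ip{\sig_h}{\grad v} + \ip{\dive\sig_h}{v}=0$. Adding this zero produces the splitting
\[
  a(u-u_h,v) = \ip{f-\dive\sig_h}{v} - \ip{\sig_h+\mu\grad u_h}{\grad v}
  = \sum_{K\in\Th}\Big[\ip{f-\dive\sig_h}{v}_K - \ip{\sig_h+\mu\grad u_h}{\grad v}_K\Big].
\]
Galerkin orthogonality is not needed here; only conformity $u_h\in V$ and $\sig_h\in\Hdiv$ are used.

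Second, the flux term is handled on each $K$ by Cauchy--Schwarz with the weights $\mu^{\mp1/2}$, which is (C2). This gives $|\ip{\sig_h+\mu\grad u_h}{\grad v}_K| \le \eta_K^{\mathrm F}\,\enorm{v}_K$.

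Third, the residual term is the only place where the cell geometry enters. By \eqref{eq:EF}, $f-\dive\sig_h$ has zero mean on $K$, so $v$ can be replaced by $v-\overline{v}_K$. Then Cauchy--Schwarz and Lemma~\ref{lem:poincare} apply on the convex cell $K$. Since $\mu=\mu_K$ is constant on $K$ by \ref{it:M3}, $\norm{\grad v}_K = \mu_K^{-1/2}\enorm{v}_K$, and we obtain
\[
  |\ip{f-\dive\sig_h}{v}_K| \le \norm{f-\dive\sig_h}_K\,\frac{h_K}{\pi}\norm{\grad v}_K = \eta_K^{\mathrm R}\,\enorm{v}_K .
\]

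Finally, we sum over the cells and apply the discrete Cauchy--Schwarz inequality:
\[
  a(u-u_h,v)\le\sum_K(\eta_K^{\mathrm R}+\eta_K^{\mathrm F})\enorm{v}_K
  \le \eta\,\Big(\sum_K\enorm{v}_K^2\Big)^{1/2} = \eta .
\]
Taking the supremum over $v$ and squaring yields \eqref{eq:upper}.

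There is no real obstacle in this argument. The one point that needs care is the third step. The mean-zero property from \eqref{eq:EF} must be used to subtract $\overline{v}_K$, because $v$ itself has no mean-zero property on $K$. The cellwise constancy of $\mu$ is also essential, since it is what converts $\norm{\grad v}_K$ into $\enorm{v}_K$ with exactly the factor $\mu_K^{-1/2}$ appearing in \eqref{eq:etaR}. Throughout, no geometric constant other than $h_K/\pi$ appears.
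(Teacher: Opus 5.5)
Your proposal is correct and follows the paper's proof essentially step for step: the dual characterisation \eqref{eq:dualchar} of the energy norm, the Green-formula splitting into a residual and a flux term, cellwise Cauchy--Schwarz for the flux term, subtraction of the cell mean via \eqref{eq:EF} followed by Lemma~\ref{lem:poincare} and the cellwise constancy of $\mu$ for the residual term, and a final discrete Cauchy--Schwarz inequality. Your side remark is also accurate: Galerkin orthogonality plays no role, and only $u-u_h\in V$ and $\sig_h\in\Hdiv$ are used, as the paper itself notes after the proof.
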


\begin{proof}
Since $u - u_h \in V$, and by definition of the energy norm,
\begin{equation}\label{eq:dualchar}
  \enorm{u-u_h}
  = \sup_{\varphi \in V,\ \enorm{\varphi}=1} a(u-u_h,\varphi) .
\end{equation}
Fix such a $\varphi$. Using \eqref{eq:weak},
\[
  a(u-u_h,\varphi) = \ip{f}{\varphi} - \ip{\mu\grad u_h}{\grad\varphi}.
\]
Insert $\pm \dive\sig_h$ and use the Green formula \eqref{eq:green} for
$\sig_h \in \Hdiv$ and $\varphi \in V$, i.e.\
$\ip{\dive\sig_h}{\varphi} = -\ip{\sig_h}{\grad\varphi}$:
\begin{equation}\label{eq:splitting}
  a(u-u_h,\varphi)
  = \underbrace{\ip{f - \dive\sig_h}{\varphi}}_{=: T_{\mathrm{R}}}
    \ \underbrace{-\ip{\sig_h + \mu\grad u_h}{\grad\varphi}}_{=: T_{\mathrm{F}}} .
\end{equation}
For $T_{\mathrm{F}}$, a cellwise Cauchy--Schwarz inequality gives
\[
  T_{\mathrm{F}}
  \leq \sum_{K\in\Th}
       \norm{\mu^{-1/2}(\sig_h + \mu\grad u_h)}_K \,
       \norm{\mu^{1/2}\grad\varphi}_K
  = \sum_{K\in\Th} \eta_K^{\mathrm{F}}\, \enorm{\varphi}_K .
\]
For $T_{\mathrm{R}}$, split over the cells and use \eqref{eq:EF} to subtract the
cell mean of $\varphi$ for free:
\[
  T_{\mathrm{R}}
  = \sum_{K\in\Th} \ip{f - \dive\sig_h}{\varphi}_K
  = \sum_{K\in\Th} \ip{f - \dive\sig_h}{\varphi - \overline{\varphi}_K}_K .
\]
Cauchy--Schwarz and Lemma~\ref{lem:poincare} then yield
\[
  \begin{aligned}
  T_{\mathrm{R}}
  &\leq \sum_{K\in\Th}
       \norm{f-\dive\sig_h}_K \, \frac{h_K}{\pi}\norm{\grad\varphi}_K \\
  &= \sum_{K\in\Th}
       \Big[ \tfrac{h_K}{\pi}\mu_K^{-1/2}\norm{f-\dive\sig_h}_K \Big]
       \Big[ \mu_K^{1/2}\norm{\grad\varphi}_K \Big]
  = \sum_{K\in\Th} \eta_K^{\mathrm{R}}\, \enorm{\varphi}_K ,
  \end{aligned}
\]
where we inserted $1 = \mu_K^{-1/2}\mu_K^{1/2}$, licit because $\mu$ is constant
on $K$ by \ref{it:M3}. Summing the two bounds and applying a last, discrete
Cauchy--Schwarz inequality,
\[
  a(u-u_h,\varphi)
  \leq \sum_{K\in\Th}(\eta_K^{\mathrm{R}}+\eta_K^{\mathrm{F}})\enorm{\varphi}_K
  \leq \Big( \sum_{K\in\Th}(\eta_K^{\mathrm{R}}+\eta_K^{\mathrm{F}})^2\Big)^{1/2}
       \Big( \sum_{K\in\Th}\enorm{\varphi}_K^2\Big)^{1/2} ,
\]
and the last factor equals $\enorm{\varphi}=1$. Taking the supremum over
$\varphi$ in \eqref{eq:dualchar} concludes.
\end{proof}

\begin{remark}[What was \emph{not} used]
The proof used: the conformity $\Vh\subset V$ (through $u-u_h \in V$), the
$H(\dive)$-conformity of $\sig_h$ (through the Green formula), the cellwise
equilibration \eqref{eq:EF}, and the Poincar\'e inequality on convex cells. No
shape regularity, no minimum/maximum angle, no quasi-uniformity, no bound on the
patch cardinality, no asymptotic regime in $h$. In particular
\eqref{eq:upper} is valid on the coarsest and most distorted mesh, and even for
$k=1$ on a single element.
\end{remark}

\begin{remark}[Non-homogeneous Dirichlet data]\label{rem:nonhomog}
Suppose $u = g$ on $\Gamma$ with $g$ in the trace space of $H^1(\Om)$, and let
$u_h \in \Vh$ satisfy \eqref{eq:fem} for all $v_h \in \Vh \cap H^1_0(\Om)$ with
its boundary values fixed to some $g_h$. Theorem~\ref{thm:upper} goes through
verbatim \emph{provided} $u - u_h \in H^1_0(\Om)$, i.e.\ provided
$g_h = g$ exactly on $\Gamma$ --- the only place where the boundary condition
entered the proof was through $u - u_h \in V$ and the vanishing of the boundary
term in \eqref{eq:green}. If, as is usual, $g_h$ is merely an interpolant of
$g$, then $u - u_h \notin H^1_0(\Om)$ and an additional term appears, measuring
the distance from $g_h$ to $g$ in $H^{1/2}(\Gamma)$; bounding it computably
requires an argument of a different nature, and the constant it brings does
depend on the shape of the boundary cells. This is why the main experiments of
Section~\ref{sec:numerics} are built on test cases with homogeneous Dirichlet
data. The L-shaped test case \ref{it:T2}, whose exact solution
$r^{2/3}\sin(2\theta/3)$ has non-polynomial boundary values, therefore falls
outside the theorem; we report it separately in
Section~\ref{sec:numerics_lshape} as an experiment on what happens when a
hypothesis is dropped, not as evidence for the bound. This is a genuine gap in
the theory presented here, not an oversight; see \cite{ernvohralik2015} for the
treatment of general boundary data.
\end{remark}

\begin{proposition}[Data oscillation form]\label{prop:oscillation}
Assume the reconstruction satisfies the stronger property
$\dive\sig_h = \Pi_p f$, where $\Pi_p$ denotes the $L^2$-orthogonal projection
onto the space of piecewise polynomials of degree at most $p$ (broken, i.e.\
cellwise). Then $\eta_K^{\mathrm{R}}$ reduces to the \emph{data oscillation}
\begin{equation}\label{eq:osc}
  \osc_K(f) := \frac{h_K}{\pi}\,\mu_K^{-1/2}\norm{f - \Pi_p f}_K ,
\end{equation}
which is exactly computable, involves neither $u_h$ nor $\sig_h$, and is of
order $h_K^{p+2}$ on each cell where $f \in H^{p+1}(K)$ --- \michel{at least}
one order better than $\eta_K^{\mathrm{F}}$\michel{, which is of order $h^{k}$,
as soon as $p \geq k-1$}. It vanishes identically when $f$ is itself a
piecewise polynomial of degree $\le p$.
\end{proposition}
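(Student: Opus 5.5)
The plan is to substitute the hypothesis $\dive\sig_h = \Pi_p f$ into the definition \eqref{eq:etaR} and then read off each claimed property in turn. The order is: the identity, computability, the rate, and finally the comparison with $\eta_K^{\mathrm{F}}$.

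First, the hypothesis gives $f - \dive\sig_h = f - \Pi_p f$ on each $K$, so $\eta_K^{\mathrm{R}} = \osc_K(f)$ verbatim. I would also check that \eqref{eq:EF} holds automatically, so that Theorem~\ref{thm:upper} still applies. Since constants belong to the broken polynomial space of degree $p \ge 0$, orthogonality gives $\ip{f-\Pi_p f}{1}_K = 0$. The projection is cellwise, so $\Pi_p f|_K$ depends only on $f|_K$ and on the cell. Hence $\osc_K(f)$ is computable from $f$, $h_K$ and $\mu_K$ alone, and involves neither $u_h$ nor $\sig_h$. If $f|_K \in \mathbb{P}_p$, then $\Pi_p f = f$ on $K$ and the term vanishes.

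For the order $h_K^{p+2}$, I would combine the explicit factor $h_K/\pi$ with the approximation estimate
\[
\norm{f-\Pi_p f}_K \le C\, h_K^{p+1} |f|_{H^{p+1}(K)},
\]
which follows from the Bramble--Hilbert lemma, since $\Pi_p$ is the best $L^2(K)$ approximation in $\mathbb{P}_p$. This is where I expect the main obstacle. On a convex cell one can avoid any shape dependence of $C$ by taking an averaged Taylor polynomial, or by iterating Lemma~\ref{lem:poincare} on derivatives, which gives a constant depending only on $p$ and $d$ through $h_K$. The caveat is that for curved (nonaffine) cells, $\Pi_p$ should be understood in physical coordinates. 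The statement is asymptotic and is not used for the guarantee, so I would state it with a generic constant and remark that convexity makes it shape-independent.

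Finally, for the comparison I would recall that $\eta_K^{\mathrm{F}}$ behaves like the energy error, which is $O(h^k)$ for degree-$k$ Lagrange elements with a smooth solution (upper and lower bound, cf.~(C4)). The condition $p \ge k-1$ then gives $p+2 \ge k+1$, which is at least one order better.
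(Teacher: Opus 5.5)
Your proposal is correct and follows the route the paper takes implicitly: the paper gives no separate proof, treating the identity as the direct substitution of $\dive\sig_h = \Pi_p f$ into \eqref{eq:etaR}, and the rate as the factor $h_K/\pi$ times the standard $L^2$-projection estimate, exactly as you do. Your extra remark is also correct: on convex cells, applying Lemma~\ref{lem:poincare} repeatedly to mean-corrected derivatives gives a shape-independent bound $\norm{f-\Pi_p f}_K \le (h_K/\pi)^{p+1}\norm{\mathrm{D}^{p+1}f}_K$, which suits the paper's theme better than a generic Bramble--Hilbert constant.
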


\begin{remark}[When may the residual term be dropped?]\label{rem:dropR}
Only under the hypothesis of Proposition~\ref{prop:oscillation}, and even then
it should be \emph{computed} rather than dropped, since it is cheap. For a
reconstruction that merely satisfies \eqref{eq:EF}, $\eta_K^{\mathrm{R}}$ is of
the same order as $\eta_K^{\mathrm{F}}$ and neglecting it destroys the
guarantee. Classical presentations are often silent on this distinction; the
FEniCS script of Section~\ref{sec:code} evaluates both terms.
\end{remark}

\subsection{Efficiency, and where the mesh quality strikes back}
\label{sec:efficiency}

Reliability is the good news; efficiency is where geometry returns. We state the
known result informally, as we shall not use it quantitatively.

\begin{theorem}[Local efficiency \michel{\cite{braess2009,ernvohralik2015,ernvohralik2020,caizhang2012}}]
\label{thm:efficiency}
For the patchwise reconstruction of Section~\ref{sec:patchwise} on a simplicial
mesh, there holds
\begin{equation}\label{eq:efficiency}
  \eta_K^{\mathrm{F}}
  \ \leq \ C_{\mathrm{st}}\big(\gamma_{\omega_K}\big)\,
  \Big( \enorm{u-u_h}_{\omega_K} + \osc_{\omega_K}(f) \Big),
  \qquad
  \osc_{\omega_K}(f)^2 := \sum_{K'\subset\omega_K}\osc_{K'}(f)^2 ,
\end{equation}
where $C_{\mathrm{st}}$ depends on the shape-regularity
$\gamma_{\omega_K} := \max_{K'\subset\omega_K}\gamma_{K'}$ of the patch and on
its cardinality, but neither on the polynomial degree $k$\michel{, which is the
content of \cite{braess2009,ernvohralik2020},} nor
on the diffusion contrast\michel{, which for conforming elements is the result of
Cai and Zhang \cite{caizhang2012}, obtained without any quasi-monotonicity
assumption on $\mu$}.
\end{theorem}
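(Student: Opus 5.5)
The plan is to follow the Braess--Schöberl / Ern--Vohralík route. I would first localise $\eta_K^{\mathrm{F}}$ to the vertex patches, then prove a stability estimate on each patch, and finally bound the patch residual by the error.

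\textbf{Step 1 (localisation).} The patchwise reconstruction of Section~\ref{sec:patchwise} writes $\sig_h=\sum_{\mathrm{a}}\sig_h^{\mathrm{a}}$. Each local flux $\sig_h^{\mathrm{a}}$ is a discrete minimiser over a broken Raviart--Thomas space on $\omega_\mathrm{a}$. It satisfies the constraints $\dive\sig_h^{\mathrm{a}}=\Pi_p(\psi_\mathrm{a}f-\mu\grad u_h\cdot\grad\psi_\mathrm{a})$ and has zero normal trace on $\partial\omega_\mathrm{a}$ (away from $\Gamma$). It is the minimiser of $\norm{\mu^{-1/2}(\sig_h^{\mathrm{a}}+\psi_\mathrm{a}\mu\grad u_h)}_{\omega_\mathrm{a}}$. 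Since $\sum_{\mathrm{a}}\psi_\mathrm{a}=1$ and a cell has $d+1$ vertices, the triangle inequality gives
\[
\eta_K^{\mathrm{F}}\le\sum_{\mathrm{a}\in K}\norm{\mu^{-1/2}(\sig_h^{\mathrm{a}}+\psi_\mathrm{a}\mu\grad u_h)}_{\omega_\mathrm{a}} .
\]
It therefore suffices to bound each patch term by $C_{\mathrm{st}}(\enorm{u-u_h}_{\omega_\mathrm{a}}+\osc_{\omega_\mathrm{a}}(f))$.

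\textbf{Step 2 (discrete-continuous stability, the main obstacle).} I would invoke the polynomial-degree-robust stability of \cite{braess2009,ernvohralik2020}. The discrete constrained minimum is bounded, up to $C_{\mathrm{st}}$, by the continuous constrained minimum over all $H(\dive)$ fields on the patch with the same divergence and the same boundary normal trace. By duality, that continuous minimum equals
\[
\sup\big\{(\psi_\mathrm{a}f,v)-(\mu\grad u_h,\grad(\psi_\mathrm{a}v))\big\},
\]
where the supremum runs over $v\in H^1_*(\omega_\mathrm{a})$ (mean-free for interior vertices, zero on $\Gamma$ otherwise) with $\enorm{v}_{\omega_\mathrm{a}}=1$. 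The solvability (Galerkin orthogonality with test function $\psi_\mathrm{a}$) ensures this supremum is finite.

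The stability constant is the genuinely geometric input. It comes from a sequence of cellwise $H(\dive)$ liftings on the patch, each built by Piola transport of a reference extension with polynomial-degree-robust bounds. Their norms depend on $\norm{\mathbb{J}_K}\norm{\mathbb{J}_K^{-1}}$, hence on $\gamma_{K'}$. They also depend on the number of cells swept around $\mathrm{a}$. This is why $C_{\mathrm{st}}$ depends on $\gamma_{\omega_K}$ and on the patch cardinality. For this step I would cite the proof rather than redo it.

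\textbf{Step 3 (bounding the dual residual).} By \eqref{eq:weak}, the functional in Step 2 equals
\[
a(u-u_h,\psi_\mathrm{a}v)+(f-\Pi_pf,\psi_\mathrm{a}v)\quad\text{up to the projection}.
\]
The first term is at most $\enorm{u-u_h}_{\omega_\mathrm{a}}\enorm{\psi_\mathrm{a}v}_{\omega_\mathrm{a}}$. Expanding $\grad(\psi_\mathrm{a}v)=\psi_\mathrm{a}\grad v+v\grad\psi_\mathrm{a}$ and using $\norm{\grad\psi_\mathrm{a}}_\infty\lesssim\rho_K^{-1}$ together with a patch Poincaré/Friedrichs inequality bounds this by $C(\gamma_{\omega_\mathrm{a}})$. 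The oscillation term is handled by subtracting cell means and applying Lemma~\ref{lem:poincare} cellwise, which gives $\osc_{\omega_\mathrm{a}}(f)$.

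Robustness in the contrast $\mu$ requires replacing this naive weighting by the weighted patch argument of Cai and Zhang \cite{caizhang2012}. I would import that argument. Summing the bounds over the vertices of $K$ yields \eqref{eq:efficiency}.
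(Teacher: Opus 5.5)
The paper gives no proof of this statement. It is quoted informally from Braess--Pillwein--Sch\"oberl, Ern--Vohral\'ik and Cai--Zhang, and your skeleton (localise to vertex patches, discrete--continuous stability, bound the dual residual) is the skeleton of those works. Checked against the statement as written, however, your sketch breaks at three places.

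\emph{First}, Step~2 needs $p\ge k$, which you never state. The construction of Section~\ref{sec:patchwise} is written for general $p$, and it is run with $p=0$ in Section~\ref{sec:numerics_pw}. The degree-robust stability lemma compares discrete and continuous minima only for a target in the broken $\RT_p$ space, and $\psi_\mathrm{a}\mu\grad u_h$ is a broken $\mathbb{P}_k$ field. Without $p\ge k$ the reduction of Step~1 is lost. For $k=1$, $p=0$, the field $\mu_K^{-1}\sig_h^{\mathrm{a}}|_K$ lies in $\RT_0(K)=\{\boldsymbol{\alpha}+\beta\xv\}$, whose Jacobians are multiples of the identity. By contrast, $\psi_\mathrm{a}\grad u_h|_K$ has the rank-one Jacobian $\grad u_h|_K\otimes\grad\psi_\mathrm{a}$. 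Hence $\norm{\mu^{-1/2}(\sig_h^{\mathrm{a}}+\psi_\mathrm{a}\mu\grad u_h)}_K$ is bounded below by a shape-dependent multiple of $\mu_K^{1/2}\norm{\grad u_h}_K$. For smooth $u$ this is a factor $h^{-1}$ larger than the local error, so no single patch term is efficient.

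\emph{Second}, Step~3 does not produce $\osc_{\omega_K}(f)$. The dual of the constrained minimum is $\ip{\Pi_p g_\mathrm{a}}{v}_{\omega_\mathrm{a}}-\ip{\psi_\mathrm{a}\mu\grad u_h}{\grad v}_{\omega_\mathrm{a}} = a(u-u_h,\psi_\mathrm{a}v)-\ip{g_\mathrm{a}-\Pi_p g_\mathrm{a}}{v}_{\omega_\mathrm{a}}$; your functional, written with $\psi_\mathrm{a}f$, has lost the projection. For $p\ge k-1$ one has $g_\mathrm{a}-\Pi_p g_\mathrm{a}=\psi_\mathrm{a}f-\Pi_p(\psi_\mathrm{a}f)$. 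Subtracting cell means therefore gives $\frac{h_{K'}}{\pi}\mu_{K'}^{-1/2}\norm{\psi_\mathrm{a}f-\Pi_p(\psi_\mathrm{a}f)}_{K'}$, and $\osc_{K'}(f)$ does not control this. Any $f\in\mathbb{P}_p(K')$ of exact degree $p$ has $\osc_{K'}(f)=0$, while $\psi_\mathrm{a}f\notin\mathbb{P}_p(K')$. What does hold is $\norm{\psi_\mathrm{a}f-\Pi_p(\psi_\mathrm{a}f)}_{K'}\le\norm{f-\Pi_{p-1}f}_{K'}$ for $p\ge 1$. So you have two options:
\begin{itemize}
\item state the bound with this $\Pi_{p-1}$-oscillation; or
\item split $\psi_\mathrm{a}f=\psi_\mathrm{a}(f-\Pi_p f)+\psi_\mathrm{a}\Pi_p f$ and bound the second contribution, $h_{K'}\mu_{K'}^{-1/2}\norm{\Pi_p f-\Pi_{p-1}f}_{K'}\le h_{K'}\mu_{K'}^{-1/2}\norm{\Pi_p f+\dive(\mu\grad u_h)}_{K'}$, by the error through an element-bubble argument, whose constant depends on $p$.
\end{itemize}

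\emph{Third}, you cannot obtain contrast-independence by ``importing'' Cai--Zhang into Step~3. Your estimate $\enorm{\psi_\mathrm{a}v}_{\omega_\mathrm{a}}\lesssim\enorm{v}_{\omega_\mathrm{a}}$ is, even after the optimal constant shift that compatibility permits, a $\mu$-weighted Poincar\'e inequality on the patch. That inequality holds with a contrast-independent constant only under quasi-monotonicity, which is precisely the assumption Cai and Zhang dispense with. Likewise, the stability lemma you cite in Step~2 is an unweighted statement, and moving it to the $\mu^{-1}$-weighted norm costs the contrast across the patch. To my understanding, Cai and Zhang replace this route rather than repair it. They bound the weighted local minimum by an explicitly constructed admissible field, hence by a coefficient-weighted residual estimator. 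They then conclude with the contrast-robust bubble-function lower bound of that estimator, an argument that is not degree-robust. Your sketch therefore yields, by two different proofs, either degree-robustness with a contrast-dependent constant, or contrast-robustness at fixed degree, but not one constant independent of both. The two clauses of the statement, which the paper itself attributes to different references, have to be stated and proved separately.
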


Combining Theorems~\ref{thm:upper} and \ref{thm:efficiency}, the effectivity
index
\begin{equation}\label{eq:Ieff}
  I := \frac{\eta}{\enorm{u-u_h}}
\end{equation}
satisfies $1 \leq I \leq C_{\mathrm{st}}(\gamma)$ up to oscillation: the
\emph{lower} bound $I\geq 1$ is unconditional, whereas the \emph{upper} bound
degrades with the mesh quality. Distortion can therefore make the estimator
pessimistic, never unreliable. From an engineering standpoint this is exactly
the right failure mode --- the user is never told that the solution is more
accurate than it is. Quantifying the growth of $I$ with the quality vector
\eqref{eq:qualityvector} is one of the objectives of Section~\ref{sec:numerics}.

\begin{remark}[Locally damaged meshes]
Following \cite{duprez2019damage}, if only a small subset
$\Th^{\mathrm{dam}} \subset \Th$ of cells is distorted, one expects
$C_{\mathrm{st}}$ to degrade only on the patches touching
$\Th^{\mathrm{dam}}$, so that the global effectivity index remains $O(1)$: the
few bad patches contribute a bounded amount to the sum. The relevant parameter
should then be the \emph{diameter of the damaged patch}, not the quality of an
isolated cell. This is a conjecture we probe numerically in
Section~\ref{sec:numerics_damaged}.
\end{remark}

\subsection{A first (expensive) estimator: global mixed
            \texorpdfstring{$H(\dive)$}{H(div)} formulation}
\label{sec:mixed}

We now describe two ways of producing a $\sig_h$ satisfying \eqref{eq:EF}. The
first, presented here, is trivial to implement and expensive; the second,
Section~\ref{sec:patchwise}, is the method of choice.
Fix $p \geq 0$ and let
\begin{equation}\label{eq:mixedspaces}
  Q_h := \{ q_h \in L^2(\Om) : q_h|_K \in \mathbb{P}_p(K)
            \ \ \forall K\in\Th \} ,
  \ 
  \Vfl_h \subset \Hdiv \ \text{ with } \ \dive\Vfl_h = Q_h .
\end{equation}
Admissible choices on a simplicial mesh are the Raviart--Thomas pair
$\Vfl_h = \RT_p(\Th)$ \cite{raviartthomas1977} and the Brezzi--Douglas--Marini
pair $\Vfl_h = \BDM_{p+1}(\Th)$ \cite{bdm1985}. The degree shift matters:
$\dive\RT_q = \mathbb{P}_q$ because $\RT_q = [\mathbb{P}_q]^d \oplus
\xv\,\widetilde{\mathbb{P}}_q$, whereas $\BDM_q = [\mathbb{P}_q]^d$ only gives
$\dive\BDM_q = \mathbb{P}_{q-1}$; pairing $\BDM_p$ with $\mathbb{P}_p$ would be
unstable. Both pairs above are inf-sup stable
\cite{boffibrezzifortin2013}. We then solve the dual mixed formulation of
\eqref{eq:strong}: find
$(\sig_h, \lambda_h) \in \Vfl_h \times Q_h$ such that
\begin{equation}\label{eq:mixedpb}
\left\{
\begin{aligned}
  \ip{\mu^{-1}\sig_h}{\tauv_h}_\Om - \ip{\lambda_h}{\dive\tauv_h}_\Om
    &= 0 && \forall \tauv_h \in \Vfl_h, \\
  \ip{\dive\sig_h}{q_h}_\Om &= \ip{f}{q_h}_\Om && \forall q_h \in Q_h .
\end{aligned}
\right.
\end{equation}
Since $\dive\Vfl_h = Q_h$, the second equation is equivalent to
$\dive\sig_h = \Pi_p f$; hence \eqref{eq:EF} holds \emph{exactly}, up to
round-off, and Proposition~\ref{prop:oscillation} applies. The first equation is
the weak form of the constitutive relation $\mu^{-1}\sig + \grad\lambda = 0$
--- the Dirichlet condition $u=0$ being natural here, it is enforced weakly and
no essential constraint is imposed on $\Vfl_h$. Thus $\lambda_h$ approximates
$u$ and $\sig_h$ approximates the exact flux $\sig = -\mu\grad u$ directly in
$\Hdiv$; see \cite{carstensen2013} for the analysis of the resulting estimator.

\begin{remark}[Pros and cons]
Pros: three lines of \texttt{FEniCS}/\texttt{FreeFEM}, no local solver to write,
and it produces an \emph{exactly} equilibrated flux. Cons: it requires the
solution of a \emph{global} saddle-point problem, typically more expensive than
the primal problem it is supposed to certify, which defeats the purpose in a
production run. It is however perfect as a reference implementation, and this is
how we use it in Section~\ref{sec:numerics}.
\end{remark}

\begin{remark}[Do not simply project the discrete flux]\label{rem:noproject}
A tempting shortcut is to take $\sig_h := \Pi_{\Vfl_h}(-\mu\grad u_h)$, the
$L^2$-projection of the discrete flux onto $\RT_p$. The result does belong to
$\Hdiv$, so the Green formula used in Theorem~\ref{thm:upper} is still valid,
and the bound \eqref{eq:upper} still \emph{holds}. But nothing forces
\eqref{eq:EF}, so $\eta_K^{\mathrm{R}}$ is no longer a data oscillation: it is a
genuine term of the same order as $\eta_K^{\mathrm{F}}$, and the bound, though
valid, becomes useless in practice. Such a projection is essentially a
Zienkiewicz--Zhu recovery in disguise (Section~\ref{sec:comparison}); the whole
point of an \emph{equilibrated} reconstruction is precisely to make the residual
term collapse to \eqref{eq:osc}.
\end{remark}

\subsection{Patchwise reconstruction (Braess--Sch\"oberl / Ern--Vohral\'ik)}
\label{sec:patchwise}

The efficient construction localises \eqref{eq:mixedpb} on vertex patches, using
the partition of unity $\sum_{\mathrm{a}\in\mathcal{V}_h}\psi_\mathrm{a}=1$. The
idea is to build separately, on each patch, an approximation of the
\emph{localised exact flux} $\psi_\mathrm{a}\sig$, and to sum them up. The
divergence of that target is, by the Leibniz rule and \eqref{eq:exactflux},
\begin{equation}\label{eq:divlocal}
  \dive\left(\psi_\mathrm{a}\sig\right)
  = \psi_\mathrm{a}\dive\sig + \grad\psi_\mathrm{a}\cdot\sig
  = \psi_\mathrm{a} f - \mu\,\grad u\cdot\grad\psi_\mathrm{a} ,
\end{equation}
which dictates the local right-hand side below --- note the \emph{minus} sign,
which is what makes the local Neumann problem compatible.

For $\mathrm{a}\in\mathcal{V}_h$, set
\begin{equation}\label{eq:localspaces}
\begin{aligned}
  \Vfl_h^{\mathrm{a}} &:= \{ \tauv_h \in \RT_p(\omega_\mathrm{a}) :
      \tauv_h\cdot\nv = 0 \text{ on } \partial\omega_\mathrm{a}
      \setminus \Gamma \}, \\
  Q_h^{\mathrm{a}} &:= \{ q_h \in L^2(\omega_\mathrm{a}) :
      q_h|_K \in \mathbb{P}_p(K) \ \forall K \subset \omega_\mathrm{a},
      \text{ and } \ip{q_h}{1}_{\omega_\mathrm{a}} = 0 \ \text{ if }
      \mathrm{a}\in\mathcal{V}_h^{i}\},
\end{aligned}
\end{equation}
so that $\dive\Vfl_h^{\mathrm{a}} = Q_h^{\mathrm{a}}$. The zero-mean constraint
on $Q_h^{\mathrm{a}}$, and the homogeneous normal trace on the whole of
$\partial\omega_\mathrm{a}$ that goes with it, reflect the pure Neumann nature
of the local problem at an interior vertex; at a boundary vertex the normal
trace is left free on $\partial\omega_\mathrm{a}\cap\Gamma$ and no mean-value
constraint is needed. Solve, for each $\mathrm{a}$, the local mixed problem:
find $(\sig_h^{\mathrm{a}}, r_h^{\mathrm{a}}) \in
\Vfl_h^{\mathrm{a}}\times Q_h^{\mathrm{a}}$ such that
\begin{equation}\label{eq:localpb}
\left\{
\begin{aligned}
  \ip{\mu^{-1}\sig_h^{\mathrm{a}}}{\tauv_h}_{\omega_\mathrm{a}}
  - \ip{r_h^{\mathrm{a}}}{\dive\tauv_h}_{\omega_\mathrm{a}}
    &= -\ip{\psi_\mathrm{a}\grad u_h}{\tauv_h}_{\omega_\mathrm{a}}
    && \forall \tauv_h \in \Vfl_h^{\mathrm{a}}, \\[2pt]
  \ip{\dive\sig_h^{\mathrm{a}}}{q_h}_{\omega_\mathrm{a}}
    &= \ip{g_\mathrm{a}}{q_h}_{\omega_\mathrm{a}}
    && \forall q_h \in Q_h^{\mathrm{a}},
\end{aligned}
\right.
\end{equation}
with the local right-hand side suggested by \eqref{eq:divlocal},
\begin{equation}\label{eq:ga}
  g_\mathrm{a} := \psi_\mathrm{a} f
                 - \mu\,\grad u_h \cdot \grad\psi_\mathrm{a} .
\end{equation}
Note that the factor $\mu^{-1}$ in the first equation of \eqref{eq:localpb}
cancels the factor $\mu$ carried by the target $-\psi_\mathrm{a}\mu\grad u_h$,
which is why $\mu$ does not appear on its right-hand side. Equivalently, and
this is the form to keep in mind, $\sig_h^{\mathrm{a}}$ is the constrained
minimiser
\begin{equation}\label{eq:localmin}
  \sig_h^{\mathrm{a}}
  = \arg\min_{\substack{\tauv_h \in \Vfl_h^{\mathrm{a}} \\
      \dive\tauv_h = \Pi_p g_\mathrm{a}}}
    \norm{\mu^{-1/2}\left(\tauv_h + \psi_\mathrm{a}\,\mu\grad u_h\right)}
      _{\omega_\mathrm{a}} ,
\end{equation}
i.e.\ the equilibrated field closest, in the $\mu^{-1}$-weighted $L^2$ norm, to
the localised discrete flux $-\psi_\mathrm{a}\mu\grad u_h$; here $r_h^{\mathrm{a}}$
is the Lagrange multiplier of the divergence constraint. Finally set
\begin{equation}\label{eq:assembly}
  \sig_h := \sum_{\mathrm{a}\in\mathcal{V}_h} \sig_h^{\mathrm{a}} ,
\end{equation}
each $\sig_h^{\mathrm{a}}$ being extended by zero outside $\omega_\mathrm{a}$.

\begin{proposition}[The construction is well posed and equilibrated]
\label{prop:patchwise}
Let $k\geq 1$, so that $\psi_\mathrm{a}\in\Vh$ (see \cite{ernvohralik2015}
for the choice of $p$ as a function of $k$). Then:
\begin{enumerate}[label=(\roman*),nosep]
  \item For $\mathrm{a}\in\mathcal{V}_h^{i}$, problem \eqref{eq:localpb} is well
        posed. Indeed, the pure Neumann compatibility condition
        $\ip{g_\mathrm{a}}{1}_{\omega_\mathrm{a}} = 0$ required for
        $\Pi_p g_\mathrm{a}$ to belong to $\dive\Vfl_h^{\mathrm{a}}
        = Q_h^{\mathrm{a}}$ reads
        \[
          \ip{g_\mathrm{a}}{1}_{\omega_\mathrm{a}}
          = \ip{f}{\psi_\mathrm{a}}_\Om
            - \ip{\mu\grad u_h}{\grad \psi_\mathrm{a}}_\Om
          = \ell(\psi_\mathrm{a}) - a(u_h,\psi_\mathrm{a}) = 0 ,
        \]
        which is exactly the discrete problem \eqref{eq:fem} tested with the hat
        function $\psi_\mathrm{a}\in\Vh$. This is where the \emph{minus} sign in
        \eqref{eq:ga} is essential: with a plus sign one would obtain
        $2\,\ell(\psi_\mathrm{a})\neq 0$ and the local problem would have no
        solution.
  \item Each $\sig_h^{\mathrm{a}}$ has zero normal trace on
        $\partial\omega_\mathrm{a}\setminus\Gamma$, so its extension by zero has
        no normal jump across $\partial\omega_\mathrm{a}$ and belongs to
        $\Hdiv$; hence so does the finite sum $\sig_h$.
  \item $\dive\sig_h^{\mathrm{a}} = \Pi_p g_\mathrm{a}$ on $\omega_\mathrm{a}$.
        For a boundary vertex this is immediate since $Q_h^{\mathrm{a}}$ is the
        full broken polynomial space. For an interior vertex, the second
        equation of \eqref{eq:localpb} only gives that
        $\dive\sig_h^{\mathrm{a}} - \Pi_p g_\mathrm{a}$ is $L^2$-orthogonal to
        all zero-mean $q_h$, i.e.\ is a constant $c$ on $\omega_\mathrm{a}$; but
        $\ip{\dive\sig_h^{\mathrm{a}}}{1}_{\omega_\mathrm{a}}
        = \int_{\partial\omega_\mathrm{a}}\sig_h^{\mathrm{a}}\cdot\nv = 0
        = \ip{\Pi_p g_\mathrm{a}}{1}_{\omega_\mathrm{a}}$ by (i), so $c=0$.
  \item Consequently, using $\sum_\mathrm{a}\psi_\mathrm{a}\equiv 1$ and hence
        $\sum_\mathrm{a}\grad\psi_\mathrm{a}\equiv \boldsymbol{0}$,
        \[
          \dive\sig_h = \sum_\mathrm{a} \Pi_p g_\mathrm{a}
          = \Pi_p\Big( f\sum_\mathrm{a}\psi_\mathrm{a}
            - \mu\grad u_h\cdot\sum_\mathrm{a}\grad\psi_\mathrm{a}\Big)
          = \Pi_p f .
        \]
        Therefore \eqref{eq:EF} holds and Proposition~\ref{prop:oscillation}
        applies: the residual indicator reduces to the data oscillation
        \eqref{eq:osc}.
\end{enumerate}
\end{proposition}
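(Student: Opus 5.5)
The proof follows the four items of Proposition~\ref{prop:patchwise} in order. The first task is well-posedness of the local saddle-point problem \eqref{eq:localpb}. Since $\dive\Vfl_h^{\mathrm{a}} = Q_h^{\mathrm{a}}$ (the Raviart--Thomas commuting property on the patch with the zero-flux boundary condition and, at an interior vertex, the zero-mean constraint), the discrete inf-sup condition holds on the fixed finite-dimensional patch. The form $\ip{\mu^{-1}\cdot}{\cdot}$ is coercive on the whole space, hence on the kernel. Brezzi's theory then gives existence and uniqueness; only an abstract positive inf-sup constant is needed, not a shape-robust one, so no geometry enters.

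The nontrivial part of (i) is consistency of the divergence datum. I would compute $\ip{g_\mathrm{a}}{1}_{\omega_\mathrm{a}}$ directly. Because $\operatorname{supp}\psi_\mathrm{a}=\omega_\mathrm{a}$, the integrals extend to $\Om$, and \eqref{eq:ga} gives $\ell(\psi_\mathrm{a})-a(u_h,\psi_\mathrm{a})$. This vanishes by \eqref{eq:fem}, because $\psi_\mathrm{a}\in\Vh$ for $k\ge1$ and $\psi_\mathrm{a}\in V$ at an interior vertex. The same identity holds for $\Pi_p g_\mathrm{a}$, since constants lie in the broken $\mathbb{P}_p$ space. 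Hence $\Pi_p g_\mathrm{a}\in Q_h^{\mathrm{a}}$.

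For (ii) I would use the standard characterisation of $\Hdiv$ for piecewise smooth fields: a field is in $\Hdiv$ iff its normal components are continuous across faces. Inside $\omega_\mathrm{a}$ this holds because the field is in $\RT_p$. On $\partial\omega_\mathrm{a}\setminus\Gamma$ the normal trace is zero, matching the zero extension. A finite sum of $\Hdiv$ fields stays in $\Hdiv$.

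For (iii), at a boundary vertex I take $q_h=\dive\sig_h^{\mathrm{a}}-\Pi_p g_\mathrm{a}\in Q_h^{\mathrm{a}}$ in the second equation. At an interior vertex, orthogonality to zero-mean broken polynomials forces $\dive\sig_h^{\mathrm{a}}-\Pi_p g_\mathrm{a}$ to be a constant $c$ on the patch. The divergence theorem together with the zero normal trace gives zero mean for $\dive\sig_h^{\mathrm{a}}$, and (i) gives zero mean for $\Pi_p g_\mathrm{a}$, so $c=0$.

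For (iv) I sum over vertices. On each cell $K$ only the vertices of $K$ contribute, and $\Pi_p$ is linear and cellwise, so $\dive\sig_h=\Pi_p\big(f\sum\psi_\mathrm{a}-\mu\grad u_h\cdot\sum\grad\psi_\mathrm{a}\big)$. The partition of unity gives $\sum\psi_\mathrm{a}=1$ and $\sum\grad\psi_\mathrm{a}=\boldsymbol 0$, hence $\dive\sig_h=\Pi_p f$. Then \eqref{eq:EF} holds because constants lie in the range of $\Pi_p$.

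The main obstacle I expect is the inf-sup/surjectivity claim $\dive\Vfl_h^{\mathrm{a}}=Q_h^{\mathrm{a}}$ at interior vertices with zero normal flux. I would justify it by the exactness of the discrete de Rham complex with boundary conditions on a patch, or cite \cite{ernvohralik2015}. On non-simplicial cells one must also check that the mapped $\RT$ spaces keep this property.
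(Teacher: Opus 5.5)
Your proposal is correct and follows the paper's argument item by item. The compatibility comes from the discrete problem tested with $\psi_\mathrm{a}$, the $\Hdiv$-conformity from zero extension, item (iii) from the constant-$c$ argument closed by the divergence theorem and (i), and item (iv) from the partition of unity. Your one addition is to get unique solvability of \eqref{eq:localpb} from Brezzi's theory and to use compatibility only to place $\Pi_p g_\mathrm{a}$ in $\dive\Vfl_h^{\mathrm{a}}$; this is slightly more precise than the paper's item (i). With the zero-mean test space the saddle-point system is uniquely solvable regardless, and what compatibility secures is the equilibration $\dive\sig_h^{\mathrm{a}} = \Pi_p g_\mathrm{a}$, equivalently a non-empty feasible set in \eqref{eq:localmin}.
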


The whole procedure is summarised as follows.

\begin{algorithm}[Guaranteed error bound, patchwise version]\label{alg:main}
\leavevmode
\begin{enumerate}[label=\arabic*.,nosep,leftmargin=*]
  \item Solve the primal problem \eqref{eq:fem} for $u_h$.
  \item For each vertex $\mathrm{a}\in\mathcal{V}_h$ (in parallel): assemble
        $g_\mathrm{a}$ by \eqref{eq:ga} and solve the local mixed problem
        \eqref{eq:localpb} for $\sig_h^{\mathrm{a}}$.
  \item Assemble $\sig_h := \sum_\mathrm{a}\sig_h^{\mathrm{a}}$ by
        \eqref{eq:assembly}.
  \item \emph{Certificate check} (cheap, optional but recommended): verify that
        $\max_K |\ip{\dive\sig_h - f}{1}_K|$ is at round-off level.
  \item For each cell $K$, evaluate $\eta_K^{\mathrm{F}}$ by \eqref{eq:etaF} and
        $\eta_K^{\mathrm{R}} = \osc_K(f)$ by \eqref{eq:osc}.
  \item Return $\eta = \big(\sum_K(\eta_K^{\mathrm{R}}+\eta_K^{\mathrm{F}})^2
        \big)^{1/2}$. By Theorem~\ref{thm:upper},
        $\enorm{u-u_h}\leq\eta$, whatever the mesh.
\end{enumerate}
\end{algorithm}

\begin{remark}[Cost and parallelism]
Step 2 requires $\#\mathcal{V}_h$ independent mixed solves, each of a size
depending only on $p$ and on the number of cells in the patch, and each
independent of the others: the step is embarrassingly parallel and its
complexity is linear in $\#\mathcal{V}_h$, against a global saddle-point solve
for the reconstruction of Section~\ref{sec:mixed}. On the meshes of
Section~\ref{sec:numerics} with $k=1$ and $p=1$, each local system has at most
$43$ unknowns, so the arithmetic cost of step 2 is a few hundred flops per
vertex. We report that algebraic size rather than a wall-clock comparison: our
driver loops over the patches in \texttt{Python}, and its running time measures
the interpreter, not the algorithm (Section~\ref{sec:numerics_pw}).
\end{remark}

\begin{remark}[The correction term of the classical constructions]
\label{rem:ziguigui}
An alternative and historically earlier route (Destuynder--M\'etivet
\cite{destuynder1999}, Lad\`eveze \cite{ladeveze2004}) is to build a first field
$\widetilde{\pv}_h := \sum_\mathrm{a}\pv_h^{\mathrm{a}}$ that equilibrates only
approximately, and then to add a cellwise correction $\delta\pv_K$, obtained by
solving a local Neumann problem on $K$, so that
$\pv_h|_K := \widetilde{\pv}_h|_K + \delta\pv_K$ is exactly admissible. The
subtlety, easily overlooked, is that \emph{the correction must be computed
exactly} for the bound to remain guaranteed; if it is computed in a finite
dimensional space, it is not exactly admissible, and one is back to the relaxed
setting of Definition~\ref{def:eqflux}. Presentations that assert that the
correction is ``negligible'' implicitly assume a fine and shape-regular mesh
--- which is precisely the assumption we do not make. The formulation \eqref{eq:localpb}
sidesteps the issue altogether: the constraint $\dive\tauv_h = \Pi_p
g_\mathrm{a}$ is imposed \emph{inside} the discrete space, and the residual
defect is exactly the computable oscillation of
Proposition~\ref{prop:oscillation}.
\end{remark}

\begin{remark}[Distorted cells: implementation pitfalls]
On strongly distorted or nearly degenerate cells the local matrices in
\eqref{eq:localpb} become badly conditioned. Practical remedies: (i) scale the
$\RT$ basis by the face measures; (ii) solve \eqref{eq:localmin} in the
constrained least-squares form with an orthonormalised basis of the divergence
constraint; (iii) use a direct solver with pivoting on each patch --- the
matrices are tiny. Note that a badly conditioned \emph{local} problem does not
compromise the guarantee: any computed $\sig_h$ that lies in $\Hdiv$ and passes
the check $\ip{\dive\sig_h - f}{1}_K = 0$ (verifiable a posteriori, to round-off)
gives a valid bound, however suboptimal. Note also that the approximation
properties of the $\RT_p$ interpolant on strongly anisotropic tetrahedra are
known and better than the isotropic theory suggests \cite{acosta2011}, which is
encouraging for the \michel{anisotropic} mesh \michel{family} \ref{it:F5} below.
\end{remark}

\section{Comparison with other estimators}
\label{sec:comparison}

\subsection{Explicit residual}

The classical explicit residual estimator reads, on a simplicial mesh,
\begin{equation}\label{eq:residual}
  \eta_K^{\mathrm{res}} :=
  \Big( C_1^2\, h_K^2\, \mu_K^{-1}\norm{f + \zuqi{\mathrm{div}\ (\mu \grad u\michel{_h} )}}_K^2
      + C_2^2 \sum_{F \subset \partial K \cap \mathcal{F}_h^i}
        h_F\, \mu_F^{-1}\norm{[\![ \mu\grad u_h\cdot\nv ]\!]}_F^2
  \Big)^{1/2},
\end{equation}
where \zuqi{$[\![\cdot]\!]_F$} is the jump across the face $F$, $h_F:=\operatorname{diam}
F$, $\mu_F$ the larger of the two adjacent values of $\mu$, and $C_1,C_2$ are
fixed normalisation constants. One has $\enorm{u-u_h} \leq C \big(\sum_K
(\eta_K^{\mathrm{res}})^2 \big)^{1/2}$ with $C$ depending on the
shape-regularity of $\Th$ through the
constants of the Cl\'ement \cite{clement1975} / Scott--Zhang
\cite{scottzhang1990} quasi-interpolation; see
\cite{bernardi2001,verfurth2013} for a careful discussion. Two defects, both
fatal in our context: $C$ is not computable, and $C \to \infty$ as the elements
degenerate (for a triangle, $C$ grows like $(\sin\theta_{\max})^{-1}$ or worse).
Fully computable variants \cite{veeser2009,carstensen2014} replace $C$ by an
explicit expression, which however still involves the element shapes. We
nevertheless include $\eta^{\mathrm{res}}$ in the comparison, with the usual
engineering choice $C=1$, precisely to exhibit its unreliability on distorted
meshes.

\subsection{\michel{Recovery estimators of Zienkiewicz--Zhu type}}

With $G_h : [L^2]^d \to [\Vh]^d$ a gradient recovery operator (patch averaging,
superconvergent patch recovery, \dots),
\begin{equation}\label{eq:zz}
  \eta_K^{\mathrm{ZZ}} := \norm{\mu^{1/2}\left(G_h(\grad u_h)
      - \grad u_h\right)}_K .
\end{equation}
Cheap, popular, asymptotically exact on structured shape-regular meshes thanks to
superconvergence \cite{zz1992a,zz1992b} --- and with no guarantee whatsoever
otherwise. \michel{The variant implemented below takes for $G_h$ the global
$L^2$-projection of $\grad u_h$ onto the continuous $[\mathbb{P}_1]^d$ space,
the simplest member of the family; it is not the superconvergent patch
recovery of \cite{zz1992a,zz1992b}, and the superscript ``ZZ'' used throughout
is a shorthand for the recovery principle, not for that particular smoothing.} Superconvergence is exactly the property that a distorted mesh
destroys, so $\eta^{\mathrm{ZZ}}$ is expected to be the least robust of the
\michel{four}; anisotropic variants exist \cite{picasso2003}.

\subsection{Bank--Weiser}

This is the closest competitor of the equilibrated flux estimator, and the one
we shall follow quantitatively throughout Section~\ref{sec:numerics}: like the
patchwise reconstruction of Section~\ref{sec:patchwise} it solves small local
problems, and it is the estimator with the reputation of being the sharpest.

The Bank--Weiser estimator \cite{bankweiser1985} solves one local
Neumann-type problem per cell. Let
\begin{equation}\label{eq:bwspace}
  V_K^{\mathrm{BW}} := \{ v \in \mathbb{P}_{k+1}(K) :
                          v(\mathrm{a}) = 0 \ \ \forall \mathrm{a}
                          \in \mathcal{V}_h \cap K \} ,
\end{equation}
which for $k=1$ is the three-dimensional span of the quadratic edge bubbles,
i.e.\ $\mathbb{P}_2(K)$ modulo $\mathbb{P}_1(K)$ in the hierarchical sense;
see \cite{bulle2023} for the systematic construction at arbitrary $k$ and its
\texttt{FEniCS} implementation. Find $e_K^{\mathrm{BW}} \in V_K^{\mathrm{BW}}$
with
\begin{equation}\label{eq:bw}
  \ip{\mu\grad e_K^{\mathrm{BW}}}{\grad v}_K
  = \ip{f + \dive(\mu\grad u_h)}{v}_K
    - \tfrac12\sum_{F\subset\partial K\cap\mathcal{F}_h^i}
      \ip{[\![\mu\grad u_h\cdot\nv]\!]}{v}_F
  \quad \forall v \in V_K^{\mathrm{BW}} ,
\end{equation}
and set $\eta_K^{\mathrm{BW}} := \enorm{e_K^{\mathrm{BW}}}_K$. The right-hand
side is the residual functional $\ell(v) - a(u_h,v)$ written cellwise, each
facet integral being split half and half between its two cells. Four remarks,
which are what the numerical comparison will put figures on.

\begin{enumerate}[label=(\alph*),leftmargin=*,nosep]
  \item \emph{No boundary condition} is imposed on $V_K^{\mathrm{BW}}$: since
        the space contains no constant, the local bilinear form is already
        positive definite on it, and the pure Neumann problem
        \eqref{eq:bw} is solvable without any compatibility condition. This is
        what makes Bank--Weiser cheap, and it is also the reason it needs no
        equilibration step --- and gets no guarantee in exchange.
  \item \emph{Reliability holds only under a saturation assumption} --- that
        the $\mathbb{P}_{k+1}$ solution be strictly closer to $u$ than the
        $\mathbb{P}_k$ one by a fixed factor. The resulting constant is not
        one, it depends on the shape regularity, and, unlike the equilibration
        defect of step~4 of Algorithm~\ref{alg:main}, the assumption cannot be
        checked a posteriori: nothing in the computed output tells the user
        whether the bound holds.
  \item \emph{The $1/2$ split of the facet term is the classical, unweighted
        choice}, and it is not robust with respect to the diffusion
        coefficient: the jump $[\![\mu\grad u_h\cdot\nv]\!]$ carries the
        \emph{larger} of the two values of $\mu$, while the local form on the
        low-coefficient side weighs it against the \emph{smaller} one.
        Coefficient-weighted splits repair this; we use the classical version,
        and Section~\ref{sec:numerics_bw} shows what it costs.
  \item \emph{Cost}: \eqref{eq:bw} is symmetric positive definite and of
        dimension $\dim\mathbb{P}_{k+1}(K) - \#\{\text{vertices}\}$ --- three
        for $k=1$ in 2D --- against a saddle-point problem over a whole vertex
        patch for the equilibrated reconstruction. Per local solve,
        Bank--Weiser is the cheaper of the two.
\end{enumerate}

\subsection{Summary}

Table~\ref{tab:sensitivity} summarises the situation. The point we want to make
is \emph{not} that the equilibrated flux estimator is the sharpest. It is not:
on the \michel{finest} structured \michel{mesh} of Table~\ref{tab:conv} Bank--Weiser overestimates by
$0.4\%$ where the equilibrated flux overestimates by $0.8\%$, and on the
under-resolved boundary layer it is sharper still ($1.74$ against $2.40$). The
point is that it is the only one of the four whose validity does not evaporate
when the mesh is bad --- and, as Section~\ref{sec:numerics_bw} shows, the
sharpness advantage of Bank--Weiser is exactly what distortion takes away.

\section{Goal-oriented estimation}
\label{sec:goal}

Engineers rarely care about the energy norm; they care about a quantity
of interest $J(u)$ --- a displacement at a point, a flux through a surface, an
average stress. Let $J \in V'$ be a bounded linear functional and let $z \in V$
solve the dual problem
\begin{equation}\label{eq:dual}
  a(v,z) = J(v) \qquad \forall v \in V,
\end{equation}
approximated by $z_h \in \Vh$ with $a(v_h,z_h) = J(v_h)$ for all
$v_h \in \Vh$.

\begin{proposition}[Guaranteed goal-oriented bound]\label{prop:goal}
There holds
\begin{equation}\label{eq:goalbound}
  \left| J(u) - J(u_h) \right|
  = \left| a(u-u_h, z-z_h) \right|
  \leq \enorm{u-u_h}\,\enorm{z-z_h}
  \leq \eta^{\mathrm{prim}}\,\eta^{\mathrm{dual}},
\end{equation}
where $\eta^{\mathrm{prim}}$ and $\eta^{\mathrm{dual}}$ are the
equilibrated flux estimators \eqref{eq:upper} of the primal and of the dual
problem.
\end{proposition}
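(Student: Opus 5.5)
The plan has three links: an error representation, Cauchy--Schwarz, and Theorem~\ref{thm:upper} applied twice.

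\textbf{Error representation.} Since $u-u_h\in V$, I test the dual problem \eqref{eq:dual} with $v=u-u_h$. Linearity of $J$ gives $J(u)-J(u_h)=J(u-u_h)=a(u-u_h,z)$. Galerkin orthogonality \eqref{eq:galerkin} for the primal problem, tested with $v_h=z_h\in\Vh$, gives $a(u-u_h,z_h)=0$. Subtracting yields $J(u)-J(u_h)=a(u-u_h,z-z_h)$, which is the first equality in \eqref{eq:goalbound}. This step needs only conformity, as for the energy bound.

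\textbf{Cauchy--Schwarz.} The form $a$ is symmetric and positive semidefinite, and $\mu>0$. Cauchy--Schwarz in the $\mu$-weighted $L^2$ product then gives
\[
|a(u-u_h,z-z_h)|=\left|\ip{\mu^{1/2}\grad(u-u_h)}{\mu^{1/2}\grad(z-z_h)}\right|\le\enorm{u-u_h}\,\enorm{z-z_h}.
\]

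\textbf{Applying Theorem~\ref{thm:upper} twice.} For the primal factor, Theorem~\ref{thm:upper} applies as stated, with an equilibrated $\sig_h$ for $f$. For the dual factor I write the dual problem as a Poisson problem in the form \eqref{eq:weak} with datum $J$ in place of $\ell$; $z_h$ is its Galerkin approximation. I then apply Theorem~\ref{thm:upper} with a dual reconstruction $\sig_h^{\mathrm{dual}}$ satisfying \eqref{eq:EF} for the dual source. Both estimators are nonnegative, so the two bounds multiply and give the last inequality.

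\textbf{Main obstacle.} The delicate point is that Theorem~\ref{thm:upper} is stated for a datum $f\in L^2(\Om)$, whereas $J$ is only in $V'$. For the proposition I will assume, as is implicit in calling $\eta^{\mathrm{dual}}$ ``the equilibrated flux estimator \eqref{eq:upper}'', that $J(v)=\ip{j}{v}$ for some $j\in L^2(\Om)$. The proof of Theorem~\ref{thm:upper} then transfers verbatim with $f$ replaced by $j$. Indeed, it used only the dual characterisation \eqref{eq:dualchar}, the Green formula, cellwise equilibration, and Lemma~\ref{lem:poincare}.

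For a more general $J$, for instance a flux functional or one of the form $\ip{\boldsymbol{j}}{\grad v}$, I would redo the splitting \eqref{eq:splitting} with the appropriate equilibration condition. For $\ip{\boldsymbol{j}}{\grad v}$ this means requiring $\dive(\sig_h^{\mathrm{dual}}-\boldsymbol{j})$ to have zero cell means. I would flag this extension rather than prove it here.
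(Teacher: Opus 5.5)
Your proof is correct and follows the paper's argument: you represent the goal error through the dual problem tested with $u-u_h$ together with Galerkin orthogonality at $v_h=z_h$, apply Cauchy--Schwarz in the energy inner product, and then apply Theorem~\ref{thm:upper} to the primal and the dual problem. Your caveat that this last step needs $J(v)=\ip{j}{v}$ with $j\in L^2(\Om)$, so that the dual problem falls under Theorem~\ref{thm:upper}, is a precision the paper leaves implicit; its example $J(v)=|D|^{-1}\int_D v$ satisfies it.
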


\begin{proof}
$J(u)-J(u_h) = a(u-u_h,z)$ by \eqref{eq:dual}, and $a(u-u_h,z_h)=0$ by
Galerkin orthogonality \eqref{eq:galerkin}, whence
$J(u)-J(u_h)=a(u-u_h,z-z_h)$. Conclude with the Cauchy--Schwarz inequality in
the energy norm, then Theorem~\ref{thm:upper} applied twice.
\end{proof}

Again the constant is one and no mesh assumption is used. Since $a$ is
symmetric, the dual problem is solved with the \emph{same} stiffness matrix as
the primal one --- already factorised --- so the extra cost is one additional
right-hand side plus one additional flux reconstruction, that is, roughly a
doubling of the estimation cost.

Two caveats are in order. First, \eqref{eq:goalbound} bounds
$|J(u)-J(u_h)|$ but gives no information on its \emph{sign}, whereas practice
often calls for a signed correction; sharper, non-product bounds that also
provide a lower bound are available \cite{creuse2017}. Second, the product form
is pessimistic by construction, since it discards the cancellation between the
primal and the dual error: the effectivity index of \eqref{eq:goalbound} grows
like the ratio of the two error distributions. Section~\ref{sec:numerics_goal}
quantifies this, and shows that the resulting index is nevertheless $O(1)$ and,
like the energy bound itself, insensitive to an isolated degenerate cell.


\section{Numerical experiments}
\label{sec:numerics}

All computations are performed with \texttt{FEniCSx} (\texttt{dolfinx}~0.9.0) in
serial. All the scripts, meshes and raw result files are openly
available at
\begin{center}
\url{https://github.com/michelduprez/A-posteriori-error-estimation-for-coarse-or-distorted-meshes}
\end{center}
Every table and figure below is regenerated from that repository by a
single driver, whose \texttt{README} documents the role of each file. Related
open implementations are \texttt{dolfin-tape} \cite{blechta_dolfintape} and the
Bank--Weiser code of \cite{bulle2023}.

\subsection{Protocol}

\paragraph{Test cases.}
In each case the exact solution is known in closed form, so that the true error
$\enorm{u-u_h}$ --- and hence the effectivity index \eqref{eq:Ieff} --- can be
computed and the guarantee $\eta\geq\enorm{u-u_h}$ can be \emph{checked}, not
merely asserted.
\begin{enumerate}[label=(T\arabic*),nosep]
  \item\label{it:T1} \emph{Smooth}: $\Om=(0,1)^2$,
        $u(\xv)=\sin(\pi x_1)\sin(\pi x_2)$,
        $\mu\equiv 1$, $f = 2\mu\pi^2 u$, homogeneous Dirichlet data. Reference
        case: the error is smooth and equidistributed, so any deviation of $I$
        from its structured-mesh value is attributable to the mesh alone.
  \item\label{it:T2} \emph{Singular}: L-shaped domain
        $\Om = (-1,1)^2\setminus([0,1]\times[-1,0])$, $f=0$, and
        $u = r^{2/3}\sin(2\theta/3)$, with $\theta$ measured counter-clockwise
        from the positive $x_1$-axis and running over $(0,3\pi/2)$. Here
        $u\in H^{1+2/3-\epsilon}(\Om)$ only, and the Dirichlet data are
        non-homogeneous on part of $\partial\Om$, so that
        Theorem~\ref{thm:upper} does \emph{not} apply verbatim
        (Remark~\ref{rem:nonhomog}). We run the case nevertheless, with
        $g_h$ the Lagrange interpolant of $g$, precisely in order to see
        whether, and by how much, the bound fails; this is
        Section~\ref{sec:numerics_lshape}. \michel{It} tests the estimator in the presence
        of a corner singularity, where the error is strongly localised.
  \item\label{it:T3} \emph{Boundary layer}: $\Om=(0,1)^2$ and
        \[
          u(\xv) = \big(1-e^{-x_1/\varepsilon}\big)
                   \big(1-e^{-(1-x_1)/\varepsilon}\big)\, x_2(1-x_2),
          \qquad \michel{\varepsilon = 10^{-2},}
        \]
        which vanishes on $\partial\Om$ and exhibits layers of width
        $\varepsilon$ near $x_1=0$ and $x_1=1$; $f$ is obtained by
        differentiation. Natural companion of the anisotropic mesh family
        \ref{it:F5}.
  \item\label{it:T4} \emph{Heterogeneous}: $\mu = 1$ for $x_1 < 1/2$ and
        $\mu = 10^4$ for $x_1 > 1/2$, with the interface aligned with the mesh
        as required by \ref{it:M3}. The interface is placed exactly where
        $\partial u/\partial x_1 = 0$ for $u = \sin(\pi x_1)\sin(\pi x_2)$, so
        that the normal flux $\mu\,\partial u/\partial x_1$ is continuous
        across it and this $u$ \emph{is} the exact solution of the transmission
        problem, with $f = 2\mu\pi^2 u \in L^2(\Om)$. Checks the
        $\mu$-robustness announced in Theorem~\ref{thm:upper}.
\end{enumerate}

\paragraph{Mesh families.}
The synthetic meshes \ref{it:F1}--\ref{it:F5} triangulate the unit square, so
that the closed-form solutions above apply and the effectivity index can be
\emph{measured} rather than estimated; the real meshes \ref{it:F6} and
the L-shaped meshes \ref{it:F7} do not, and are treated separately.
\begin{enumerate}[label=(F\arabic*),nosep]
  \item\label{it:F1} uniform structured meshes, $N=8,\dots,128$ (reference);
  \item\label{it:F2} randomly perturbed meshes: every interior vertex is moved
        by $\delta h\,\boldsymbol{\xi}$, $\boldsymbol{\xi}$ uniform in the unit
        disc, $\delta\in\{0.10,0.20,0.30\}$. Since both endpoints of an edge
        move, the mesh stays untangled as long as $2\delta < 1/\sqrt2$, i.e.\
        $\delta < 0.354$; the code verifies a posteriori that no cell has been
        inverted;
  \item\label{it:F3} \emph{locally damaged} meshes: \emph{one single} interior
        vertex is displaced, all other cells being left untouched, in the
        spirit of \cite{duprez2019damage}. Two variants, controlled by a
        parameter $t\in\{10^{-1},\dots,10^{-4}\}$:
        \emph{needle}, where the vertex is moved a fraction $1-t$ of the way
        toward a neighbour (so $\theta_{\min}\to 0$ while $\theta_{\max}$ stays
        bounded away from $\pi$, and the maximum angle condition still holds);
        and \emph{cap}, where it is moved onto the opposite edge of one of its
        cells (so $\theta_{\max}\to\pi$ and the maximum angle condition fails).
        In both cases $s_K \sim t$;
  \item\label{it:F4} \emph{extent} of the damage: $m\in\{1,4,16,64\}$ vertices
        are cap-damaged at fixed $t=10^{-3}$, either \emph{scattered} over the
        whole domain or grouped in a single contiguous \emph{block}. At equal
        $m$ and equal local quality, the two layouts differ only by the
        diameter of the damaged region, which is precisely the parameter the
        conjecture of Section~\ref{sec:efficiency} points to. Damaged vertices
        are taken on the even sublattice so that no two of them are
        neighbours;
  \item\label{it:F5} globally anisotropic structured meshes with $N\times M$
        cells, $M \in \{N, N/4, N/8, N/16\}$, hence aspect ratios up to
        $\alpha \approx 16$;
  \item\label{it:F6} the \emph{real} meshes shipped with the project, treated
        separately in Section~\ref{sec:numerics_real}: a triangulation of
        $(-1,1)^2$ and four triangulations of a letter glyph, produced by two
        independent meshers;
  \item\label{it:F7} structured triangulations of the L-shaped domain
        of \ref{it:T2}, with $2N\times 2N$ squares of side $1/N$ over the
        enclosing square $(-1,1)^2$, each split into two right triangles and
        the cells of the missing quadrant removed, $N = 4,\dots,32$. These are
        shape-regular meshes: they carry the singularity of \ref{it:T2}, not a
        geometric defect.
\end{enumerate}
Figure~\ref{fig:meshes} shows one representative of each synthetic family.

\begin{figure}[htbp]
\centering
\includegraphics[width=\textwidth]{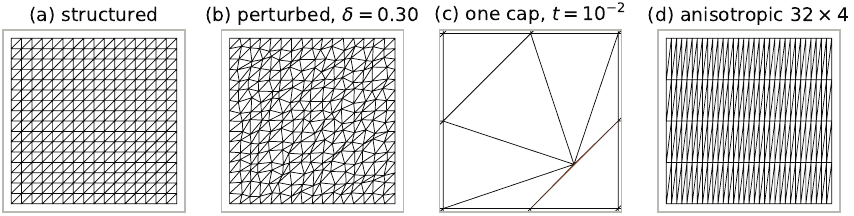}
\caption{The synthetic mesh families. (a) the structured reference
\ref{it:F1}; (b) the random perturbation \ref{it:F2} at the largest admissible
amplitude; (c) a zoom on the single cap-damaged vertex of \ref{it:F3}, the
flattened cell being highlighted --- every other cell of that mesh is the
structured one of panel (a); (d) the anisotropic family \ref{it:F5}.}
\label{fig:meshes}
\end{figure}

\paragraph{Quantities reported.}
For each pair (test case, mesh) we report the true energy error
$\enorm{u-u_h}$, the four estimators, and their global effectivity indices
\eqref{eq:Ieff}. The equilibrated flux is computed with the
$\BDM_1\times\mathbb{P}_0$ pair of Section~\ref{sec:mixed}; the residual
estimator uses the engineering calibration $C_1=C_2=1$, and $G_h$ in
\eqref{eq:zz} is the global $L^2$-projection onto the continuous
$[\mathbb{P}_1]^2$ space. In addition we report
\begin{itemize}[nosep]
  \item the \emph{certificate} \[\max_K |\ip{\dive\sig_h - f}{1}_K| /
        \michel{\max\big\{\ip{|f|}{1}_K,\ \ip{|\dive\sig_h|}{1}_K,\
        h_K^{-1}\ip{|\sig_h|}{1}_K\big\}},\] which must be at round-off level
        (step 4 of Algorithm~\ref{alg:main})\michel{; the denominator is the
        size of the terms that must cancel, and does not degenerate when
        $f = 0$ as in test case \ref{it:T2}}. \michel{It never exceeded
        $7.2\cdot 10^{-10}$ over the whole campaign, and $2.3\cdot 10^{-9}$
        if the dual problems of Section~\ref{sec:numerics_goal} are
        included};
  \item the worst \emph{cellwise} ratio $\eta_K / \enorm{u-u_h}_K$ over the
        badly shaped cells ($s_K < 0.1$), written $I_K^{\mathrm{bad}}$. Note
        that Theorem~\ref{thm:efficiency} bounds $\eta_K$ by the error on the
        \emph{patch} $\omega_K$, so a cellwise ratio slightly above one is
        expected and is not a violation of anything; the quantity is reported
        as a proxy for the local sharpness on the damaged cells.
\end{itemize}

\subsection{How it is computed: a reference implementation}
\label{sec:code}

The listing below computes the equilibrated flux by the global mixed
formulation \eqref{eq:mixedpb} and evaluates \emph{both} indicators
\zuqi{\eqref{eq:etaR} and \eqref{eq:etaF}}, including the residual term that is often
(wrongly) omitted, as well as the parameter $\mu$. It is a complete, runnable
\texttt{FEniCSx} script; the output it produces is quoted after the listing.

\begin{lstlisting}[style=py, linerange=LISTING-ENDLISTING,
caption={Guaranteed error bound
via a global $\BDM_1\times\mathbb{P}_0$ flux reconstruction. This is
\texttt{code/fenicsx/minimal\_example.py} included verbatim, so the listing
and the script that produced the output below cannot drift apart.},
label={lst:fenicsx}]
"""
Self-contained minimal example: guaranteed a posteriori error bound for the
Poisson problem, by a global mixed H(div) flux reconstruction.

This is the script reproduced in the listing in the paper.
Run inside a dolfinx container (tested with dolfinx 0.9.0), in serial:

    python3 minimal_example.py

The paper includes everything below the LISTING marker, so that the listing and
the script that produced its output cannot drift apart.
"""

# LISTING
from mpi4py import MPI
import numpy as np
import ufl
import basix.ufl
from dolfinx import fem, mesh as dmesh
from dolfinx.fem.petsc import LinearProblem

LU = {"ksp_type": "preonly", "pc_type": "lu",
      "pc_factor_mat_solver_type": "mumps"}
QD = {"quadrature_degree": 8}

# ------------------------------------------------------------ data ----
N = 64
mu = 1.0                            # physical parameter, tracked
msh = dmesh.create_unit_square(MPI.COMM_WORLD, N, N,
                              dmesh.CellType.triangle)

x = ufl.SpatialCoordinate(msh)
u_ex = ufl.sin(ufl.pi * x[0]) * ufl.sin(ufl.pi * x[1])
f = -ufl.div(mu * ufl.grad(u_ex))          # = 2 mu pi^2 u_ex

# -------------------------------------------------- primal problem ----
V = fem.functionspace(msh, ("Lagrange", 1))
msh.topology.create_connectivity(1, 2)
bdofs = fem.locate_dofs_topological(
    V, 1, dmesh.exterior_facet_indices(msh.topology))
bc = fem.dirichletbc(fem.Constant(msh, 0.0), bdofs, V)

u, v = ufl.TrialFunction(V), ufl.TestFunction(V)
uh = LinearProblem(ufl.inner(mu * ufl.grad(u), ufl.grad(v)) * ufl.dx(metadata=QD),
                   ufl.inner(f, v) * ufl.dx(metadata=QD),
                   bcs=[bc], petsc_options=LU).solve()

# ------------------------- equilibrated flux, global mixed ----
# Inf-sup stable pair BDM_{p+1} x P_p: div sigma_h = Pi_p f exactly.
p = 0
W = fem.functionspace(msh, basix.ufl.mixed_element([
    basix.ufl.element("BDM", msh.basix_cell(), p + 1),
    basix.ufl.element("DG", msh.basix_cell(), p)]))

(sig, lam) = ufl.TrialFunctions(W)
(tau, q) = ufl.TestFunctions(W)

#  ( mu^{-1} sigma_h , tau ) - ( lambda_h , div tau ) = 0
#  ( div sigma_h , q )                                = ( f , q )
# u = 0 is *natural* here: no essential boundary condition.
a = ((1.0 / mu) * ufl.inner(sig, tau) - lam * ufl.div(tau)
     + ufl.div(sig) * q) * ufl.dx(metadata=QD)
# f integrated with the same rule as in the certificate below: tested with
# q = 1_K, the second equation is then exactly the equilibration (EF).
L = ufl.inner(f, q) * ufl.dx(metadata=QD)
w = LinearProblem(a, L, bcs=[], petsc_options=LU).solve()
sig_h = w.sub(0)

# --------------------------------------------- cellwise indicators ----
DG0 = fem.functionspace(msh, ("DG", 0))
v0 = ufl.TestFunction(DG0)
dx = ufl.dx(metadata=QD)
h_K = ufl.CellDiameter(msh)


def cellwise(expr):
    return np.asarray(fem.assemble_vector(fem.form(expr * v0 * dx)).array)


r = sig_h + mu * ufl.grad(uh)
etaF = np.sqrt(cellwise((1.0 / mu) * ufl.inner(r, r)))       # eta_F
etaR = np.sqrt(cellwise((h_K / np.pi) ** 2 * (1.0 / mu)
                        * (f - ufl.div(sig_h)) ** 2))        # eta_R
eta = np.sqrt(((etaR + etaF) ** 2).sum())                    # bound

# ------------------------------------------------ check the bound ----
e = ufl.grad(uh - u_ex)
err = np.sqrt(fem.assemble_scalar(fem.form(ufl.inner(mu * e, e) * dx)))

# certificate: relative cellwise equilibration defect.  The scale is the
# largest of the terms that must cancel, so that the test stays meaningful
# when |f| is large -- and defined when f vanishes.
scale = np.maximum.reduce([cellwise(abs(f)), cellwise(abs(ufl.div(sig_h))),
                           cellwise(ufl.sqrt(ufl.inner(sig_h, sig_h)) / h_K)])
defect = np.abs(cellwise(ufl.div(sig_h) - f)) / scale

print(f"energy error   |||u-u_h|||   = {err:.6e}")
print(f"estimator      eta           = {eta:.6e}")
print(f"  flux part    ||etaF||      = {np.linalg.norm(etaF):.6e}")
print(f"  oscillation  ||etaR||      = {np.linalg.norm(etaR):.6e}")
print(f"effectivity    I = eta/err   = {eta / err:.4f}")
print(f"max rel. equilibration defect= {defect.max():.2e}")
assert eta >= err, "the guaranteed bound is violated -- check the code!"

# ENDLISTING
\end{lstlisting}

\noindent
Its output on the $64\times 64$ mesh is
\begin{verbatim}
energy error   |||u-u_h|||   = 5.451370e-02
estimator      eta           = 5.539722e-02
  flux part    ||etaF||      = 5.450766e-02
  oscillation  ||etaR||      = 1.135797e-03
effectivity    I = eta/err   = 1.0162
max rel. equilibration defect= 1.03e-12
\end{verbatim}
Three things are worth reading off these six lines. The bound holds with an
effectivity of $1.6\%$; the oscillation term \eqref{eq:osc} accounts for $2\%$
of the estimator, so it is small but \emph{not} negligible --- dropping it
would give $\eta = 5.4508\cdot 10^{-2} < \enorm{u-u_h} = 5.4514\cdot 10^{-2}$
and the assertion on the last line would fire, which is exactly the trap of
Remark~\ref{rem:dropR}; and the equilibration certificate is at round-off, so
\eqref{eq:EF} is verified to hold, and not merely assumed to.

The assertion on the last line is the central claim of the paper: it must never
fire, whatever the mesh. Over the $32$ runs of the campaign that are
covered by Theorem~\ref{thm:upper}, and the one real mesh with a known
solution, it never did --- for either reconstruction. The patchwise variant of
Section~\ref{sec:patchwise} replaces the global solve by $\#\mathcal{V}_h$
local ones; it is implemented in \texttt{patchwise.py} and reported in
Section~\ref{sec:numerics_pw}.

\subsection{Reference case: convergence and asymptotic behaviour}
\label{sec:numerics_ref}

Table~\ref{tab:conv} reports the reference family \ref{it:F1} on the smooth
test case \ref{it:T1}.

\begin{table}[ht]
\centering
\begin{tabular}{lrrrrrrr}
\toprule
mesh & cells & $\enorm{u-u_h}$ & $\eta^{\mathrm{eq}}$ & $I^{\mathrm{eq}}$ & $I^{\mathrm{BW}}$ & $I^{\mathrm{res}}$ & $I^{\mathrm{ZZ}}$ \\
\midrule
F1 $N=8$ & 128 & 4.318e-01 & 4.878e-01 & 1.130 & 1.082 & 5.526 & 0.960 \\
F1 $N=16$ & 512 & 2.175e-01 & 2.316e-01 & 1.065 & 1.037 & 5.601 & 0.984 \\
F1 $N=32$ & 2048 & 1.090e-01 & 1.125e-01 & 1.032 & 1.017 & 5.632 & 0.993 \\
F1 $N=64$ & 8192 & 5.451e-02 & 5.540e-02 & 1.016 & 1.008 & 5.645 & 0.997 \\
F1 $N=128$ & 32768 & 2.726e-02 & 2.748e-02 & 1.008 & 1.004 & 5.651 & 0.998 \\
\bottomrule
\end{tabular}
\caption{Test case \ref{it:T1} on the structured family \ref{it:F1}, $k=1$.
The equilibrated flux estimator is \emph{asymptotically exact}
($I^{\mathrm{eq}}\to 1$), the residual estimator with $C=1$ overestimates by a
factor close to $5.65$, and the recovery estimator converges to $1$
\emph{from below} --- it underestimates the error on every mesh of this
family. Bank--Weiser is the sharpest of the four here, and stays above one.}
\label{tab:conv}
\end{table}

Four observations, all of which persist throughout the campaign \michel{on
the unit square}.
\begin{enumerate}[nosep]
  \item $I^{\mathrm{eq}}$ decreases monotonically from $1.130$ to $1.008$: the
        guaranteed bound is not merely valid, it becomes asymptotically exact.
        The residual indicator $\eta^{\mathrm{R}}$ contributes only
        $\norm{\eta^{\mathrm{R}}}/\norm{\eta^{\mathrm{F}}} \approx 2\%$ of the
        estimator at $N=64$ and decreases at the higher rate predicted by
        Proposition~\ref{prop:oscillation}.
  \item $I^{\mathrm{res}} \approx 5.6$: the explicit residual estimator with
        the naive calibration $C=1$ is off by more than half an order of
        magnitude. This is not a defect of the estimator but a reminder that
        its constant is unknown.
  \item $I^{\mathrm{ZZ}} < 1$ on every mesh. The recovery estimator
        \emph{underestimates} the error --- benignly here ($0.96$ to $0.998$),
        much less benignly below.
  \item $I^{\mathrm{BW}}$ is the sharpest of the four, $1.082$ down to
        $1.004$: on a structured mesh Bank--Weiser overestimates by about half
        as much as the equilibrated flux. Section~\ref{sec:numerics_bw} follows
        what becomes of that advantage.
\end{enumerate}

\subsection{Distorted meshes: the main experiment}
\label{sec:numerics_main}

Table~\ref{tab:main} is the core of the paper: the same test case \ref{it:T1}
computed on meshes of steadily degrading quality.

\begin{table}[ht]
\centering
\small
\begin{tabular}{lrrrrrrr}
\toprule
mesh & $s_{\min}$ & $\theta_{\max}$ (deg) & $I^{\mathrm{eq}}$ & $I^{\mathrm{BW}}$ & $I^{\mathrm{res}}$ & $I^{\mathrm{ZZ}}$ & $I_K^{\mathrm{bad}}$ \\
\midrule
F2 $\delta=0.10$ & 5.6e-01 & 106.9 & 1.033 & 1.022 & 5.619 & 0.989 & --- \\
F2 $\delta=0.20$ & 4.0e-01 & 126.1 & 1.033 & 1.036 & 5.583 & 0.978 & --- \\
F2 $\delta=0.30$ & 2.3e-01 & 147.3 & 1.034 & 1.062 & 5.543 & 0.961 & --- \\
F3 needle $t=10^{-1}$ & 5.3e-02 & 132.0 & 1.032 & 1.021 & 5.637 & 0.992 & 1.00 \\
F3 needle $t=10^{-2}$ & 5.0e-03 & 134.7 & 1.032 & 1.048 & 5.639 & 0.991 & 1.00 \\
F3 needle $t=10^{-3}$ & 5.0e-04 & 135.0 & 1.032 & 1.282 & 5.640 & 0.991 & 1.00 \\
F3 needle $t=10^{-4}$ & 5.0e-05 & 135.0 & 1.032 & 2.664 & 5.640 & 0.991 & 1.00 \\
F3 cap $t=10^{-1}$ & 1.0e-01 & 168.6 & 1.032 & 1.023 & 5.631 & 0.992 & 1.00 \\
F3 cap $t=10^{-2}$ & 1.0e-02 & 178.9 & 1.032 & 1.028 & 5.636 & 0.992 & 1.00 \\
F3 cap $t=10^{-3}$ & 1.0e-03 & 179.9 & 1.032 & 1.028 & 5.637 & 0.992 & 1.00 \\
F3 cap $t=10^{-4}$ & 1.0e-04 & 180.0 & 1.032 & 1.028 & 5.637 & 0.992 & 1.00 \\
F4 scattered $m=1$ & 1.0e-03 & 179.9 & 1.032 & 1.019 & 5.639 & 0.993 & 1.00 \\
F4 block $m=1$ & 1.0e-03 & 179.9 & 1.032 & 1.028 & 5.637 & 0.992 & 1.00 \\
F4 scattered $m=4$ & 1.0e-03 & 179.9 & 1.032 & 1.032 & 5.639 & 0.991 & 1.06 \\
F4 block $m=4$ & 1.0e-03 & 179.9 & 1.031 & 1.058 & 5.651 & 0.988 & 1.00 \\
F4 scattered $m=16$ & 1.0e-03 & 179.9 & 1.031 & 1.095 & 5.685 & 0.983 & 1.06 \\
F4 block $m=16$ & 1.0e-03 & 179.9 & 1.030 & 1.152 & 5.705 & 0.980 & 1.00 \\
F4 scattered $m=64$ & 1.0e-03 & 179.9 & 1.031 & 1.198 & 5.750 & 0.964 & 1.14 \\
F4 block $m=64$ & 1.0e-03 & 179.9 & 1.028 & 1.321 & 5.814 & 0.964 & 1.02 \\
F5 $32\times32$ & 7.1e-01 & 90.0 & 1.032 & 1.017 & 5.632 & 0.993 & --- \\
F5 $32\times8$ & 2.4e-01 & 90.0 & 1.092 & 1.163 & 6.962 & 0.985 & --- \\
F5 $32\times4$ & 1.2e-01 & 90.0 & 1.182 & 1.529 & 8.648 & 0.965 & --- \\
F5 $32\times2$ & 6.2e-02 & 90.0 & 1.401 & 2.297 & 10.429 & 0.880 & 1.87 \\
\bottomrule
\end{tabular}
\caption{Test case \ref{it:T1}, $N=32$, $k=1$, on the mesh families
\ref{it:F2}--\ref{it:F5}. The scaled Jacobian $s_{\min}$ degrades over four
orders of magnitude and $\theta_{\max}$ reaches $180$ degrees, yet
$I^{\mathrm{eq}}$ never leaves the interval $[1.028, 1.401]$ and is never below
one. $I^{\mathrm{ZZ}}$, by contrast, is below one in every single row, and
$I^{\mathrm{BW}}$ --- the sharpest of the four on the structured mesh ---
reaches $2.664$ on the thinnest needle.}
\label{tab:main}
\end{table}

Figure~\ref{fig:effectivity} plots the same information for four of
the families. The reading is immediate: the two equilibrated curves
stay just above the guarantee line whatever is done to the mesh, the
global reconstruction being lifted only by the anisotropy of panel~(c) and the
patchwise one only by the accumulated damage of panel~(d); the recovery curve
lives in the shaded region, where an estimator reports less error than has been
committed.

\begin{figure}[htbp]
\centering
\includegraphics[width=\textwidth]{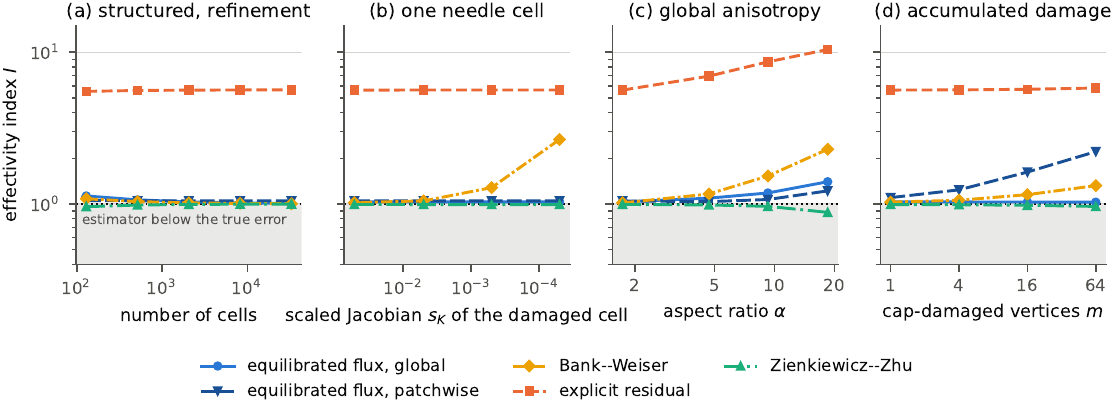}
\caption{Effectivity indices, test case \ref{it:T1}, for the two
reconstructions of the guaranteed bound and the three competitors.
(a) uniform refinement of the structured mesh; (b) a single needle cell whose
scaled Jacobian is swept over four decades, the rest of the mesh being
untouched; (c) increasing global anisotropy; (d) an increasing number
$m$ of cap-damaged vertices grouped in a contiguous block, at fixed local
quality $s_K = 10^{-3}$. In the shaded band the estimator is \emph{below} the
true error: neither reconstruction of the guaranteed bound ever enters
it, the Zienkiewicz--Zhu estimator never leaves it. Panel~(b) is the sharpest
contrast of the campaign: one degenerate cell out of $2048$ leaves the
equilibrated index at $1.032$ and multiplies the Bank--Weiser index by $2.6$.
Panels~(c) and (d) are mirror images of one another: anisotropy
degrades the global reconstruction and leaves the patchwise one flat,
accumulated damage does the reverse (Section~\ref{sec:numerics_pw}).}
\label{fig:effectivity}
\end{figure}

\subsection{Randomly perturbed meshes}
\label{sec:numerics_perturbed}

The first three rows of Table~\ref{tab:main} perturb every interior vertex. As
$\delta$ grows from $0.10$ to $0.30$, the worst scaled Jacobian falls from
$0.56$ to $0.23$ and the largest angle climbs from $107$ to $147$ degrees, i.e.\
the mesh becomes one that most quality-control tools would reject. The
equilibrated effectivity index moves from $1.033$ to $1.034$: it is, for
practical purposes, \emph{constant}. The recovery estimator, on the other hand,
drifts from $0.989$ to $0.961$: the perturbation destroys the superconvergence
on which its accuracy rests, and it does so in the unsafe direction.
Bank--Weiser degrades mildly and safely, $1.022$ to $1.062$, losing its
advantage over the equilibrated flux by $\delta = 0.30$ without ever falling
below one.

\subsection{Locally damaged meshes, and the extent of the damage}
\label{sec:numerics_damaged}

The next eight rows of Table~\ref{tab:main} displace a \emph{single} vertex.
Between $t=10^{-1}$ and $t=10^{-4}$ the scaled Jacobian of the damaged cell
drops by three orders of magnitude, down to $s_K = 5\cdot 10^{-5}$ for the
needle and $10^{-4}$ for the cap, and $\theta_{\max}$ reaches $180.0$ degrees
to the printed precision. The response of the estimator is nil:
$I^{\mathrm{eq}} = 1.032$ throughout, to four digits, and the \emph{cellwise}
sharpness on the damaged cell itself, $I_K^{\mathrm{bad}}$, stays at $1.00$.
An isolated degenerate cell --- of either type, needle or cap --- is simply
invisible to the equilibrated flux estimator. This is the numerical
counterpart of Theorem~\ref{thm:upper}: nothing in its proof could have
noticed. It is \emph{not} invisible to Bank--Weiser, whose index reaches
$2.664$ on the thinnest needle; we come back to this in
Section~\ref{sec:numerics_bw}, since it is the clearest difference the two
local-problem estimators exhibit.

Table~\ref{tab:damaged} isolates the effect of the \emph{extent} of the damaged
region, family \ref{it:F4}: the number $m$ of degenerate cells and their local
quality are held fixed, only their spatial arrangement changes.

\begin{table}[ht]
\centering
\begin{tabular}{lrrrrr}
\toprule
mesh & $m$ & $\operatorname{diam}/h$ & $I^{\mathrm{eq}}$ & $I^{\mathrm{BW}}$ & $I_K^{\mathrm{bad}}$ \\
\midrule
F4 scattered $m=1$ & 1 & 0.0 & 1.0323 & 1.019 & 1.00 \\
F4 block $m=1$ & 1 & 0.0 & 1.0321 & 1.028 & 1.00 \\
F4 scattered $m=4$ & 4 & 27.9 & 1.0321 & 1.032 & 1.06 \\
F4 block $m=4$ & 4 & 2.8 & 1.0314 & 1.058 & 1.00 \\
F4 scattered $m=16$ & 16 & 34.4 & 1.0311 & 1.095 & 1.06 \\
F4 block $m=16$ & 16 & 8.9 & 1.0297 & 1.152 & 1.00 \\
F4 scattered $m=64$ & 64 & 36.9 & 1.0307 & 1.198 & 1.14 \\
F4 block $m=64$ & 64 & 19.8 & 1.0278 & 1.321 & 1.02 \\
\bottomrule
\end{tabular}
\caption{Family \ref{it:F4}: $m$ cap-damaged vertices at fixed $t=10^{-3}$
(hence fixed $s_K=10^{-3}$ and $\theta_{\max}=179.9$ degrees), test case
\ref{it:T1}, $N=32$. Only the diameter of the damaged region changes between
the \emph{scattered} and the \emph{block} rows at equal $m$. Note that
$I^{\mathrm{BW}}$ responds to the accumulation of damage where
$I^{\mathrm{eq}}$ does not.}
\label{tab:damaged}
\end{table}

Two conclusions, one of which is negative and worth stating as such.
\begin{enumerate}[nosep]
  \item The global index is remarkably stable, $1.0278 \le I^{\mathrm{eq}} \le
        1.0323$, and if anything \emph{decreases} as damage accumulates: a
        damaged mesh is a slightly finer mesh, and the estimator tracks the
        error it actually commits.
  \item With the global reconstruction, the experiment does \emph{not}
        discriminate the conjecture of Section~\ref{sec:efficiency}. At equal
        $m$, the contiguous block (diameter $\michel{19.8h}$ at $m=64$) behaves
        marginally \emph{better} than the scattered layout (diameter $\michel{36.9h}$),
        $I_K^{\mathrm{bad}} = 1.02$ against $1.14$, and the differences are
        within the variation caused by where in $\Om$ the damaged cells happen
        to fall, since $\enorm{u-u_h}_K$ itself varies by an order of magnitude
        across the domain. This is a limitation of the reconstruction, not of
        the mesh family: the conjecture is about the local efficiency constant
        $C_{\mathrm{st}}(\gamma_{\omega_K})$, and the global reconstruction
        exposes no local constant. Section~\ref{sec:numerics_pw} redoes
        the experiment with the patchwise reconstruction, which does, and
        settles the question --- against the conjecture as we stated it.
\end{enumerate}

\subsection{Anisotropic meshes}
\label{sec:numerics_aniso}

The last four rows of Table~\ref{tab:main} are the only ones on which the
equilibrated estimator visibly degrades: $I^{\mathrm{eq}}$ grows from $1.032$
to $1.401$ as the cells are stretched from aspect ratio $1$ to about $16$, and
the cellwise sharpness on the worst cells reaches $I_K^{\mathrm{bad}} = 1.87$.
This is consistent with Theorem~\ref{thm:efficiency} and with the distinction
we have insisted on: anisotropy is a \emph{global} loss of shape regularity ---
every cell is bad --- whereas the damage of family \ref{it:F3} is local. The
guarantee is of course untouched; what is lost is sharpness, and it is lost
gradually. For comparison, $I^{\mathrm{res}}$ nearly doubles over the same
range, from $5.63$ to $10.43$, which is the practical statement that no fixed
calibration constant $C$ can serve both a structured and an anisotropic mesh.
Bank--Weiser degrades faster than either, $1.017$ to $2.297$, crossing the
equilibrated flux between $\alpha = 1$ and $\alpha = 4$.

\subsection{The two reconstructions of the same bound}
\label{sec:numerics_pw}

Everything reported so far uses the global mixed reconstruction of
Section~\ref{sec:mixed}. Table~\ref{tab:patchwise} adds the patchwise one of
Section~\ref{sec:patchwise}, at the degree $p=k=1$ prescribed by
\cite{ernvohralik2015} and at the cheapest admissible degree $p=0$. The two
reconstructions certify the same quantity with the same constant; they differ
only in sharpness, and the comparison is instructive because they do \emph{not}
lose sharpness under the same defects.

\begin{table}[ht]
\centering
\begin{tabular}{lrrrr}
\toprule
mesh & $I^{\mathrm{eq}}$ & $I^{\mathrm{pw}}$ ($p=1$) & $I^{\mathrm{pw}}$ ($p=0$) & $I^{\mathrm{BW}}$ \\
\midrule
F1 $N=8$ & 1.130 & 1.053 & 1.295 & 1.082 \\
F1 $N=16$ & 1.065 & 1.049 & 1.240 & 1.037 \\
F1 $N=32$ & 1.032 & 1.048 & 1.226 & 1.017 \\
F1 $N=64$ & 1.016 & 1.047 & 1.247 & 1.008 \\
F1 $N=128$ & 1.008 & 1.047 & 1.309 & 1.004 \\
F2 $\delta=0.30$ & 1.034 & 1.055 & 1.790 & 1.062 \\
F3 needle $t=10^{-4}$ & 1.032 & 1.048 & 1.226 & 2.664 \\
F3 cap $t=10^{-4}$ & 1.032 & 1.112 & 1.238 & 1.028 \\
F4 scattered $m=64$ & 1.031 & 1.812 & 3.373 & 1.198 \\
F4 block $m=64$ & 1.028 & 2.215 & 2.864 & 1.321 \\
F5 $32\times32$ & 1.032 & 1.048 & 1.226 & 1.017 \\
F5 $32\times8$ & 1.092 & 1.036 & 1.253 & 1.163 \\
F5 $32\times4$ & 1.182 & 1.070 & 1.357 & 1.529 \\
F5 $32\times2$ & 1.401 & 1.220 & 1.638 & 2.297 \\
\bottomrule
\end{tabular}
\caption{Effectivity indices of the two reconstructions of the
guaranteed bound, test case \ref{it:T1}. $I^{\mathrm{eq}}$ is the global mixed
reconstruction of Section~\ref{sec:mixed}, $I^{\mathrm{pw}}$ the patchwise one
of Section~\ref{sec:patchwise}. The global one degrades under global anisotropy
and ignores local damage; the patchwise one does the opposite. The $p=0$ column
shows that the degree of the local space is not a free parameter.}
\label{tab:patchwise}
\end{table}

Three observations.
\begin{enumerate}[nosep]
  \item \textbf{The degree $p$ is not free.} At $p=1$ the patchwise
        index is flat at $1.047$--$1.053$ over the whole structured family. At
        $p=0$ it is not merely larger ($1.23$ to $1.31$) but \emph{non-monotone}
        under refinement, so that the reconstruction no longer converges at the
        rate of the error. The bound of Theorem~\ref{thm:upper} holds for either
        --- any admissible $\sig_h$ gives a valid certificate --- but only
        $p=k$ delivers a sharp one. This is the numerical face of the
        $p$-robustness theory of \cite{braess2009,ernvohralik2020}, and it is
        worth stating explicitly, since $p=0$ is the natural choice for a
        practitioner who reads only the reliability statement.
  \item \textbf{Anisotropy costs the global reconstruction, not the
        patchwise one.} Over the family \ref{it:F5}, $I^{\mathrm{eq}}$ grows
        from $1.032$ to $1.401$ while $I^{\mathrm{pw}}$ stays between $1.036$
        and $1.220$; at aspect ratio $4$ the patchwise index is actually
        \emph{smaller} than on the isotropic mesh. The same holds on the
        under-resolved boundary layer, where $I^{\mathrm{pw}} = 1.583$ against
        $I^{\mathrm{eq}} = 2.402$ (Table~\ref{tab:cases}).
  \item \textbf{Accumulated damage costs the patchwise reconstruction,
        not the global one.} The two indices exchange roles on
        family~\ref{it:F4}: at $m=64$ cap-damaged vertices in a contiguous
        block, $I^{\mathrm{eq}} = 1.028$ while $I^{\mathrm{pw}} = 2.215$.
        Notably, a \emph{needle} leaves the patchwise index untouched
        ($1.048$) while a \emph{cap} of the same scaled Jacobian moves it to
        $1.112$ --- the reverse of the Bank--Weiser behaviour of
        Section~\ref{sec:numerics_bw}, which is blind to caps and sensitive to
        needles.
\end{enumerate}

Panels~(c) and (d) of Figure~\ref{fig:effectivity} display the exchange
directly: the two curves cross over from one panel to the other, while the
three competitors behave the same way in both.

The practical reading is that the guarantee is a property of the
\emph{framework} and the sharpness a property of the \emph{reconstruction}, and
that no single reconstruction is uniformly sharpest. Since both certificates
are valid, the sharper of the two may simply be retained: the minimum of two
guaranteed bounds is a guaranteed bound. Over the whole campaign that minimum
never exceeds $\michel{1.22}$ on the smooth test case.

\paragraph{The damage-extent conjecture.}
Section~\ref{sec:numerics_damaged} could not discriminate the
conjecture of Section~\ref{sec:efficiency} because the global reconstruction
exposes no local efficiency constant. The patchwise one does, and it answers
the question --- in the negative, as stated. At equal $m$ and equal local
quality, the \emph{block} layout is systematically worse than the
\emph{scattered} one: $1.239$ against $1.123$ at $m=4$, $1.624$ against $1.407$
at $m=16$, $2.215$ against $1.812$ at $m=64$. Yet the block has the
\emph{smaller} diameter ($\michel{19.8}\,h$ against $\michel{36.9}\,h$ at $m=64$,
Table~\ref{tab:damaged}). The diameter of the damaged region is therefore not
the relevant parameter; what degrades $C_{\mathrm{st}}$ is the number of
flattened cells sharing a single vertex patch, which is exactly the quantity
that enters the local efficiency proof and which the block layout maximises at
fixed $m$. We record this as the corrected form of the conjecture.

\subsection{Does any quality measure predict the effectivity?}
\label{sec:numerics_corr}

Section~\ref{sec:quality} introduced the quality vector $\mathbf{q}_K$
of \eqref{eq:qualityvector} and announced that it would be correlated with the
local effectivity index. Table~\ref{tab:corr} reports the empirical Pearson
correlation $\rho(I_K,\cdot)$ between each component and the local index
\michel{$I_K$ of \eqref{eq:localeff},}
computed with the patchwise reconstruction --- the denominator being
the patch error, which is what Theorem~\ref{thm:efficiency} actually bounds.

\begin{table}[ht]
\centering
\small
\begin{tabular}{lrrrrrr}
\toprule
mesh & $\rho(I_K, \gamma_K)$ & $\rho(I_K, \alpha_K)$ & $\rho(I_K, s_K)$ & $\rho(I_K, \kappa_K)$ & $\rho(I_K, \theta_{\min})$ & $\rho(I_K, \theta_{\max})$ \\
\midrule
F2 $\delta=0.10$ & -0.03 & -0.04 & 0.08 & -0.04 & 0.08 & -0.01 \\
F2 $\delta=0.20$ & -0.07 & -0.08 & 0.11 & -0.09 & 0.10 & -0.02 \\
F2 $\delta=0.30$ & -0.11 & -0.13 & 0.14 & -0.14 & 0.12 & -0.03 \\
F3 needle $t=10^{-4}$ & -0.25 & -0.25 & 0.16 & -0.25 & 0.15 & -0.04 \\
F3 cap $t=10^{-4}$ & 0.01 & 0.01 & -0.32 & 0.01 & -0.33 & 0.27 \\
F4 scattered $m=64$ & 0.12 & 0.12 & -0.36 & 0.12 & -0.36 & 0.30 \\
F4 block $m=64$ & 0.19 & 0.19 & -0.48 & 0.19 & -0.47 & 0.33 \\
\bottomrule
\end{tabular}
\caption{Correlation between the local effectivity index
\michel{\eqref{eq:localeff}} and each component of the quality vector
\eqref{eq:qualityvector}, test case \ref{it:T1}, $N=32$. No component reaches
$|\rho| = \michel{0.5}$, and the sign of $\rho(I_K,\gamma_K)$ flips between the needle
and the cap families.}
\label{tab:corr}
\end{table}

The outcome is a negative result, and we regard it as the useful
one.
\begin{itemize}[nosep]
  \item \emph{No measure predicts the effectivity.} The largest
        correlation over the whole table is $|\rho| = \michel{0.48}$, for $s_K$ on the
        block-damaged mesh. A quality report is not a proxy for an error bound,
        which is the practical form of the message of
        Table~\ref{tab:sensitivity}.
  \item \emph{The sign is not even stable.} $\rho(I_K,\gamma_K)$ is
        $-0.25$ on the needle family and $+0.19$ on the block-damaged one. A
        shape-regularity ratio that grows can accompany either an increase or a
        decrease of the local effectivity, depending on \emph{how} the cell
        degenerates. This is Babu\v{s}ka and Aziz \cite{babuskaaziz1976} seen
        from the a posteriori side.
  \item \emph{Three of the six measures carry the same information.}
        The columns $\gamma_K$, $\alpha_K$ and $\kappa_K$ agree to two decimals
        \michel{on the damaged families, and to within $0.03$ on the perturbed
        ones}. Two reasons combine: for a triangle
        $\kappa_K = (\alpha_K + \alpha_K^{-1})/2$ is an exact function of
        $\alpha_K$; and on a mesh in which all cells but a few carry the same
        quality value, the correlation coefficient is determined by \emph{which}
        cells are outliers and not by how large the outlying values are. The
        measures that do separate the families are $s_K$ and the extremal
        angles.
\end{itemize}

\subsection{Bank--Weiser: sharper on good meshes, more fragile on bad ones}
\label{sec:numerics_bw}

The $I^{\mathrm{BW}}$ column of Tables~\ref{tab:conv}--\ref{tab:cases} deserves
to be read on its own, because it is the only competitor that is a serious one.

\paragraph{On good meshes it wins.}
On the structured family, $I^{\mathrm{BW}}$ falls from $1.082$ to $1.004$ while
$I^{\mathrm{eq}}$ falls from $1.130$ to $1.008$: Bank--Weiser overestimates by
about half as much, at every refinement level, and both are asymptotically
exact. On the under-resolved boundary layer \ref{it:T3} the gap is wider still,
$1.743$ against $2.402$. If sharpness on a reasonable mesh were the only
criterion, Bank--Weiser would be the estimator to use --- which is presumably
why it is.

\paragraph{A single needle cell costs it a factor of $2.6$.}
The needle rows of Table~\ref{tab:main} are the sharpest contrast of the whole
campaign. As $t$ goes from $10^{-1}$ to $10^{-4}$, $I^{\mathrm{eq}}$ stays at
$1.032$ to four digits, while $I^{\mathrm{BW}}$ climbs
$1.021 \to 1.048 \to 1.282 \to 2.664$. One cell out of $2048$, and the
Bank--Weiser estimator has lost a factor of $2.6$; the equilibrated one has not
moved. Figure~\ref{fig:effectivity}(b) plots exactly this.

The \emph{cap} rows show nothing of the sort:
$I^{\mathrm{BW}}$ moves from $1.023$ to $1.028$ and then stops. The two damages have \michel{comparable}
scaled Jacobian\michel{s (within a factor of two at equal $t$)} and the same shape-regularity ratio $\gamma_K \approx 2000$, so
$\gamma_K$ does not explain the difference. What distinguishes them is that the
needle collapses an \emph{edge} --- two vertices come to within $t\,h$ of each
other --- whereas the cap collapses only the \emph{area}, all three edges
keeping length $O(h)$. The Bank--Weiser space \eqref{eq:bwspace} is spanned by
the edge bubbles, so an edge of length $t\,h$ carries a basis function whose
gradient blows up like $(t h)^{-1}$, and the local problem \eqref{eq:bw} is
solved in a space that is badly scaled against the residual it must represent.
We offer this as the plausible mechanism rather than as a proof; what the
experiment establishes is the fact, and the fact is that the relevant geometric
parameter for Bank--Weiser is not the one the quality-control tool reports.

\paragraph{Anisotropy and accumulated damage cost it too.}
Over the anisotropic family, $I^{\mathrm{BW}}$ grows from $1.017$ to $2.297$
against $1.032$ to $1.401$ for the equilibrated flux: the two estimators start
with Bank--Weiser ahead and end with it $60\%$ behind
(Figure~\ref{fig:effectivity}(c)). The same crossing happens as damage
accumulates in family \ref{it:F4}, Table~\ref{tab:damaged}: at $m=64$ in a
contiguous block, $I^{\mathrm{BW}} = 1.321$ against $I^{\mathrm{eq}} = 1.028$.

\paragraph{The diffusion contrast breaks the unweighted variant.}
The largest entry of the campaign is the last row of Table~\ref{tab:cases}:
with $\mu$ jumping by $10^4$ across a mesh-aligned interface,
$I^{\mathrm{BW}} = 12.514$ where $I^{\mathrm{eq}} = 1.032$. This is remark~(c)
of Section~\ref{sec:comparison} made quantitative. The facet term of
\eqref{eq:bw} carries $\mu = 10^4$ from the high-coefficient side, and on the
low-coefficient cell it is weighed against $\mu = 1$ in the local bilinear
form; the estimator therefore attributes to that cell an error four orders of
magnitude too large.

Coefficient-weighted averages are the known cure, and we have measured
them, so as not to charge the estimator with a defect of one particular
variant. Replacing the $1/2$ split of \eqref{eq:bw} by the weights
$\omega_K = \mu_K/(\mu_+ + \mu_-)$ --- which give the low-coefficient side a
share of the jump proportional to its own $\mu$, and restore the
$\mu_{\max}^{-1/2}$ scaling of the residual estimator \eqref{eq:residual} ---
brings the index from $12.514$ down to $1.014$, and changes nothing at all on
the runs with constant $\mu$, where the two variants agree to machine
precision. The column $I^{\mathrm{BW}_\mu}$ of Table~\ref{tab:cases} reports
it. The $12.5$ is therefore entirely an artefact of the unweighted split, and
the statement that survives is the one about \emph{who} has to know: the
contrast-robustness of the equilibrated flux estimator is a theorem
\michel{\cite{caizhang2012}} that requires nothing of the user, whereas that of
Bank--Weiser is a modelling choice that has to be got right, silently, before
the computation.

\paragraph{It does underestimate, and nothing signals it.}
Across the $32$ synthetic runs on the unit square and the one real mesh with a
known solution, $I^{\mathrm{BW}}$ stayed above one, with a minimum of $1.004$;
Bank--Weiser should therefore \emph{not} be lumped together with
Zienkiewicz--Zhu, which was below one every single time. On the
L-shaped domain, however, it is below one on all four meshes, down to
$I^{\mathrm{BW}} = 0.9935$ (Table~\ref{tab:lshape}). The deficit is small,
under a percent, and a practitioner would not notice it. The
saturation assumption on which reliability rests is not satisfied there,
nothing in the computed output says so, and the estimator returns a number that
is confidently below the truth. The difference with the equilibrated flux is
not that the latter is sharper, but that its failure mode is a certificate one
can check (step~4 of Algorithm~\ref{alg:main}) rather than an assumption one
must believe.

\paragraph{Cost.}
On the finest structured mesh ($32\,768$ cells), the primal solve takes
$\michel{0.13}$~s, the Bank--Weiser estimator $\michel{0.27}$~s and the
\emph{global} mixed reconstruction $\michel{1.10}$~s: Bank--Weiser costs about
twice the primal solve, the global equilibrated reconstruction about eight
times. We deliberately do not quote a wall-clock figure for the
patchwise reconstruction. Its $16\,641$ local systems have at most $43$
unknowns each, so its arithmetic content is well below one second of dense
linear algebra; but our driver loops over the patches in \texttt{Python} and
spends $\michel{5.5}$~s doing so, which measures the interpreter and not the method. The
meaningful statement is the algebraic one: per vertex, one dense symmetric
system of size at most $43$, all of them independent, against one global
saddle-point solve for the reconstruction of Section~\ref{sec:mixed} and one
$3\times 3$ system per cell for Bank--Weiser.

\subsection{Boundary layers and heterogeneous coefficients}
\label{sec:numerics_other}

Table~\ref{tab:cases} collects the other two test cases on the structured
$32\times 32$ mesh.

\begin{table}[ht]
\centering
\begin{tabular}{llrrrrrr}
\toprule
case & & $\enorm{u-u_h}$ & $I^{\mathrm{eq}}$ & $I^{\mathrm{pw}}$ & $I^{\mathrm{BW}}$ & $I^{\mathrm{BW}_\mu}$ & $I^{\mathrm{ZZ}}$ \\
\midrule
\ref{it:T1} & smooth, $\mu=1$ & 1.090e-01 & 1.032 & 1.048 & 1.017 & 1.017 & 0.993 \\
\ref{it:T3} & layer, $\varepsilon=10^{-2}$ & 1.178e+00 & 2.402 & 1.583 & 1.743 & 1.743 & 0.414 \\
\ref{it:T4} & $\mu$ contrast $10^{4}$ & 7.705e+00 & 1.032 & 1.048 & 12.514 & 1.014 & 0.993 \\
\bottomrule
\end{tabular}
\caption{Test cases \ref{it:T1}, \ref{it:T3}, \ref{it:T4} on the structured
mesh \ref{it:F1} with $N=32$, $k=1$.}
\label{tab:cases}
\end{table}

\paragraph{Boundary layer.}
With $\varepsilon = 10^{-2}$ and $h = 1/32$, the layers are heavily
under-resolved and the discrete solution carries an error of the order of
the solution itself. The equilibrated estimator reports it: $I^{\mathrm{eq}} = 2.40$, i.e.\ it announces an
error at most $2.4$ times the true one --- pessimistic, but the announcement is
of the right order and it is a \emph{guarantee}. The recovery estimator
announces $I^{\mathrm{ZZ}} = 0.41$, i.e.\ it reports \emph{two and a half times
less error than there is}. On a pre-asymptotic mesh, where the user most needs
to be warned, the popular estimator is the one that gives false confidence.
This is, in our view, the most consequential single figure of the
campaign. Bank--Weiser, for its part, is the sharpest of the four here,
$I^{\mathrm{BW}} = 1.74$, and stays above one --- on this test case the
saturation assumption evidently holds.

\paragraph{Heterogeneous coefficient.}
With a contrast of $10^4$ across a mesh-aligned interface, the effectivity
indices of the equilibrated flux, of the recovery and of the residual estimator
are unchanged from the homogeneous case ($1.032$ against $1.032$, $0.993$
against $0.993$, and $5.632$ against $5.643$) --- but \emph{not} \michel{that of} Bank--Weiser, which
jumps from $1.017$ to $12.514$; its coefficient-weighted variant
returns $1.014$. See Section~\ref{sec:numerics_bw}. This confirms numerically
the $\mu$-robustness of Theorem~\ref{thm:upper}: the coefficient enters only
through the weights $\mu^{\pm 1/2}$, and no constant sees the contrast.
The patchwise reconstruction behaves identically,
$I^{\mathrm{pw}} = 1.048$ against $1.048$.

\michel{Assumption \ref{it:M3} is not decorative either, but its failure is
not one that the certificate can see. Repeating the run with the damaged
vertex placed \emph{on} the interface, so that it is dragged across it and
$\mu$ ceases to be constant on the three cells it drags along
(\texttt{interface\_check.py} in the repository), the equilibration defect
stays at $3.7\cdot 10^{-11}$: the mixed problem \eqref{eq:mixedpb}
equilibrates $\sig_h$ against whatever $f$ it is given, and \eqref{eq:EF}
holds whatever $\mu$ does inside a cell. What \ref{it:M3} controls is the
residual indicator. Its proof inserts $\mu_K^{-1/2}$, which on a crossed cell
has to be replaced by $(\min_K \mu)^{-1/2}$ --- the bound survives --- and,
more importantly, $f - \Pi_0 f$ is no longer of higher order there, since $f$
jumps by a factor $10^4$ inside the cell. The discrete solution itself is
barely affected, $\enorm{u-u_h} = 7.73$ against $7.71$, but the ratio
$\norm{\eta^{\mathrm{R}}}/\norm{\eta^{\mathrm{F}}}$ rises from $0.04$ to
above $1$, reaching $4.4$ on the crossed cells, and the guaranteed index
becomes $I^{\mathrm{eq}} = 9.2$. The bound is still a bound, and it still
holds by virtue of Theorem~\ref{thm:upper} once the weight is corrected, but
it has lost its sharpness because of three cells out of $2048$; the signal is
in the oscillation column of the output, not in the certificate. Whether
$\mu$ is constant on every cell is, in any case, a property of the data that
can be checked by inspection before the computation, which is why we do not
count \ref{it:M3} among the hypotheses the certificate is responsible for.}

\subsection{The L-shaped domain: dropping a hypothesis on purpose}
\label{sec:numerics_lshape}

Remark~\ref{rem:nonhomog} identifies the one hypothesis of
Theorem~\ref{thm:upper} that we cannot enforce in practice: that the discrete
boundary values coincide with $g$. Test case \ref{it:T2} on the meshes
\ref{it:F7} violates it --- $g_h$ is the Lagrange interpolant of
$r^{2/3}\sin(2\theta/3)$, so $u - u_h \notin H^1_0(\Om)$ and the theorem does
not apply. Table~\ref{tab:lshape} reports what happens.

\begin{table}[ht]
\centering
\begin{tabular}{lrrrrrrr}
\toprule
mesh & cells & $\enorm{u-u_h}$ & rate & $I^{\mathrm{eq}}$ & $I^{\mathrm{pw}}$ & $I^{\mathrm{BW}}$ & $I^{\mathrm{ZZ}}$ \\
\midrule
F7 L-shape $N=4$ & 96 & 1.898e-01 & --- & 1.110 & 1.187 & 0.9992 & 1.062 \\
F7 L-shape $N=8$ & 384 & 1.221e-01 & 0.64 & 1.105 & 1.177 & 0.9951 & 1.056 \\
F7 L-shape $N=16$ & 1536 & 7.799e-02 & 0.65 & 1.103 & 1.172 & 0.9939 & 1.053 \\
F7 L-shape $N=32$ & 6144 & 4.957e-02 & 0.65 & 1.101 & 1.169 & 0.9935 & 1.052 \\
\bottomrule
\end{tabular}
\caption{Test case \ref{it:T2} on the L-shaped meshes \ref{it:F7},
$k=1$. The observed convergence rate $0.65$ matches the expected $h^{2/3}$.
Theorem~\ref{thm:upper} does not cover these runs, the Dirichlet datum being
interpolated rather than exact; both reconstructions nevertheless stay above the
true error, whereas Bank--Weiser is below it on every mesh of the family.}
\label{tab:lshape}
\end{table}

Three observations, the first two of which qualify the guarantee
rather than support it.
\begin{enumerate}[nosep]
  \item \emph{The bound survives, but we cannot claim it.} Both
        $I^{\mathrm{eq}} = 1.101$--$1.110$ and
        $I^{\mathrm{pw}} = 1.169$--$1.187$ stay comfortably above one, and
        remarkably stable across four refinements of a singular solution. This
        is evidence that the boundary term neglected here is small on this
        family, not that it is small in general; on a coarser mesh or with a
        rougher datum it need not be. We report the numbers as an indication,
        and repeat that the certificate of step~4 of Algorithm~\ref{alg:main}
        does \emph{not} detect this failure --- it tests \eqref{eq:EF}, which
        still holds exactly here (the defect is at $10^{-13}$), and is silent
        about the boundary condition. This is the one gap in the guarantee that
        the computation cannot check.
  \item \emph{Bank--Weiser underestimates.} $I^{\mathrm{BW}}$ is
        $0.9992$, $0.9951$, $0.9939$, $0.9935$ on the four meshes: below one
        every time, and drifting further below under refinement. This is the
        only family of the campaign on which it does so, and it is a
        shape-regular one --- the difficulty is the corner singularity, not the
        geometry of the cells. It confirms, on a textbook test case, that the
        empirical safety observed in Section~\ref{sec:numerics_bw} is a property
        of the problems tested and not of the estimator.
  \item \emph{Zienkiewicz--Zhu is above one here.}
        $I^{\mathrm{ZZ}} = 1.05$, the only place in the campaign where it does
        not underestimate. Recovery estimators are not systematically unsafe;
        they are unsafe where superconvergence holds and their calibration is
        tuned to it, which is the smooth, structured regime. On a singular
        solution the recovered gradient is simply too rough, and the estimator
        errs the other way. Neither behaviour is under control.
\end{enumerate}

\subsection{A certified quantity of interest}
\label{sec:numerics_goal}

Table~\ref{tab:goal} applies Proposition~\ref{prop:goal} to the mean of
$u$ over the box $D = (1/4,1/2)^2$, on test case \ref{it:T1} where $J(u)$ is
known in closed form. The dual problem
$-\dive(\mu\grad z) = |D|^{-1}\mathbf{1}_D$ is solved with the same factorised
matrix as the primal one, and the same reconstruction is applied to it.

\begin{table}[ht]
\centering
\begin{tabular}{lrrrrrr}
\toprule
mesh & cells & $|J(u)-J(u_h)|$ & $\eta^{\mathrm{prim}}$ & $\eta^{\mathrm{dual}}$ & bound & $I^{\mathrm{goal}}$ \\
\midrule
F1 $N=8$ & 128 & 2.839e-02 & 4.878e-01 & 1.536e-01 & 7.493e-02 & 2.64 \\
F1 $N=16$ & 512 & 7.167e-03 & 2.316e-01 & 7.974e-02 & 1.847e-02 & 2.58 \\
F1 $N=32$ & 2048 & 1.796e-03 & 1.125e-01 & 4.036e-02 & 4.541e-03 & 2.53 \\
F1 $N=64$ & 8192 & 4.493e-04 & 5.540e-02 & 2.026e-02 & 1.122e-03 & 2.50 \\
F2 $\delta=0.30$ & 2048 & 2.083e-03 & 1.208e-01 & 4.951e-02 & 5.981e-03 & 2.87 \\
F3 needle $t=10^{-4}$ & 2048 & 1.788e-03 & 1.128e-01 & 4.149e-02 & 4.680e-03 & 2.62 \\
F3 cap $t=10^{-4}$ & 2048 & 1.808e-03 & 1.130e-01 & 4.109e-02 & 4.642e-03 & 2.57 \\
F5 $32\times2$ & 128 & 2.037e-01 & 1.633e+00 & 5.842e-01 & 9.539e-01 & 4.68 \\
\bottomrule
\end{tabular}
\caption{Guaranteed bound \eqref{eq:goalbound} on the quantity of
interest $J(v) = |D|^{-1}\int_D v$, $D = (1/4,1/2)^2$, test case \ref{it:T1}.
The goal error converges at $h^2$ and so does the product bound, so the
effectivity index is asymptotically constant. As for the energy bound, an
isolated degenerate cell leaves it essentially unchanged and global anisotropy
degrades it.}
\label{tab:goal}
\end{table}

The bound held in all eight runs. The effectivity index sits at
$2.50$--$2.64$ on the structured family and decreases monotonically under
refinement, which is the expected behaviour: $|J(u)-J(u_h)|$ and the product
$\eta^{\mathrm{prim}}\eta^{\mathrm{dual}}$ both converge at $h^2$, and what the
index measures is the cancellation between the primal and the dual error
distributions that the Cauchy--Schwarz step of \eqref{eq:goalbound} throws away.
A factor of $2.5$ is the price of a product bound; it is not the price of the
mesh. The mesh enters exactly as it does in the energy norm: a needle or a cap
with $s_K = 10^{-4}$ moves the index from $2.53$ to $\michel{2.62}$ and $2.57$
respectively, while the aspect-ratio-$16$ mesh takes it to $\michel{4.68}$ --- the
compounding of the degradation seen on each factor separately, which is what a
product of two bounds must do. A quantity of interest computed on a mesh
containing a degenerate cell can therefore be certified at essentially the same
price, and with the same constant, as the energy norm.

\subsection{Summary of the campaign}
\label{sec:numerics_summary}

The campaign comprises $49$ computations: $32$ runs on the unit square
covered by Theorem~\ref{thm:upper}, $4$ runs on the L-shaped domain that are not
(Section~\ref{sec:numerics_lshape}), $8$ goal-oriented runs
(Section~\ref{sec:numerics_goal}) and the $5$ real meshes of
Section~\ref{sec:numerics_real}. Over the runs covered by the theorem:
\begin{itemize}[nosep]
  \item the guaranteed bound $\eta \geq \enorm{u-u_h}$ held in \emph{every} run
        where the error is computable, the assertion of
        Listing~\ref{lst:fenicsx} never firing, for either
        reconstruction, and the goal-oriented bound \eqref{eq:goalbound} held in
        all $8$ of its runs;
  \item $I^{\mathrm{eq}} \in [1.008, 2.402]$, the upper end being reached only
        on the deliberately under-resolved boundary layer; on the smooth test
        case it never exceeded $1.401$, whatever the distortion;
  \item $I^{\mathrm{pw}} \in [1.036, 2.215]$ for the patchwise
        reconstruction at $p=k$, the upper end being reached on accumulated cap
        damage rather than on anisotropy: the two reconstructions of the same
        bound fail to be sharp on \emph{disjoint} families of meshes
        (Section~\ref{sec:numerics_pw});
  \item $I^{\mathrm{ZZ}} < 1$ in \emph{all} $32$ synthetic runs, with a worst
        case of $0.414$;
  \item $I^{\mathrm{BW}} \in [1.004, 12.514]$: the sharpest estimator on good
        meshes and the most fragile on bad ones. It stayed above one on
        the unit square, but fell to $0.9935$ on the L-shaped domain; its
        coefficient-weighted variant removes the $12.514$ --- it ranges over
        $[1.004, 2.664]$ --- but not the underestimation;
  \item $I^{\mathrm{res}}$ ranged over $[5.53, 10.43]$, a factor of almost two
        between the extreme meshes, so that no universal calibration of $C$
        exists;
  \item the relative equilibration defect never exceeded
        $\michel{7.2\cdot 10^{-10}}$ for the global reconstruction and
        $\michel{4.4\cdot 10^{-10}}$ for the patchwise one\michel{, boundary
        layer included}.
\end{itemize}
\michel{Two hypotheses were violated on purpose, and the outcome sorts the
hypotheses of Theorem~\ref{thm:upper} into three classes. Assumption
\ref{it:M3} --- a coefficient interface crossing a cell --- is a property of
the data, checkable by inspection before any computation; its violation
leaves the bound valid but raises the effectivity index from $1.03$ to
$9.2$, and the certificate does not see it. The equilibration \eqref{eq:EF}
is a property of the reconstruction, and the certificate of step~4 of
Algorithm~\ref{alg:main} checks it (the $H(\dive)$-conformity of $\sig_h$
holds by construction of the space). The exactness of the discrete Dirichlet
datum (Remark~\ref{rem:nonhomog}) is checkable by neither, and this is worth
stating plainly: it is the one hypothesis for which the computed output
cannot vouch.}

\subsection{Real geometries}
\label{sec:numerics_real}

We close with the meshes that reached us from the applications this
work originates in, family \ref{it:F6}. They come as \texttt{OFF} files and
illustrate the two situations of engineering practice.

\paragraph{A mesh that no solver will accept.}
The file \texttt{square\_good.1.off} triangulates $(-1,1)^2$ with $801$ right
triangles. Its quality figures are those of a perfectly structured mesh,
$s_K \equiv 0.707$ and $\theta_{\max} = 90$ degrees --- and it contains one cell
of exactly zero area, because two of its vertices are the same point to machine
precision. This is not an exotic pathology: the file is named
``good'', nothing in its quality report signals the defect, and every finite
element code will produce a singular system from it. Our reader (\texttt{offio.py}) reports the defect,
merges the coincident vertices and drops the degenerate cell, leaving $800$
cells. Since the domain is then exactly the square, the manufactured solution
$u = \cos(\pi x_1/2)\cos(\pi x_2/2)$ applies and the effectivity index can be
measured: $I^{\mathrm{eq}} = 1.054$ (Table~\ref{tab:real}). The
guarantee holds on a mesh that was not prepared for the experiment.

\paragraph{A mesh whose exact solution nobody knows.}
The four remaining meshes triangulate the letter ``A'' --- a non-convex domain
with thin features, sharp corners and re-entrant corners --- and come from two
different meshers (\texttt{as}, unstructured and graded; \texttt{aa}, uniform)
at two refinement levels each. No closed-form solution is available, which is
the normal state of affairs. We solve $-\Delta u = 1$ with homogeneous
Dirichlet data and report the \emph{certified} quantity that a practitioner
would actually obtain: the bound $\eta$, and its size relative to the energy of
the computed solution.

\begin{table}[htbp]
\centering
\small
\begin{tabular}{lrrrrrrrrr}
\toprule
mesh & cells & $s_{\min}$ & $\theta_{\max}$ & $\eta$ & $\eta/\enorm{u_h}$ & $I^{\mathrm{eq}}$ & $\eta^{\mathrm{BW}}/\eta$ & $\eta^{\mathrm{pw}}/\eta$ & defect \\
\midrule
as6 & 1327 & 0.23 & 143.4 & 1.551e-03 & 28.1\% & --- & 1.103 & 1.065 & $1\cdot 10^{-14}$ \\
as7 & 3262 & 0.26 & 145.3 & 1.166e-03 & 20.8\% & --- & 1.130 & 1.054 & $3\cdot 10^{-14}$ \\
aa6 & 1983 & 0.25 & 143.4 & 1.111e-03 & 19.8\% & --- & 1.035 & 1.062 & $7\cdot 10^{-14}$ \\
aa7 & 7924 & 0.26 & 145.3 & 5.979e-04 & 10.5\% & --- & 0.986 & 1.067 & $3\cdot 10^{-13}$ \\
square\_good & 800 & 0.71 & 90.0 & 1.797e-01 & 8.1\% & 1.054 & 1.047 & 0.997 & $7\cdot 10^{-14}$ \\
\bottomrule
\end{tabular}
\caption{Family \ref{it:F6}, the real meshes. The four glyph meshes solve
$-\Delta u = 1$ and no exact solution is available, so only the certified bound
is reported; \texttt{square\_good} carries a manufactured solution and its
effectivity index is measured. The penultimate column compares the
Bank--Weiser bound to the guaranteed one: on these meshes the two agree within
$13\%$. The next column does the same for the patchwise
reconstruction, which agrees with the global one within $7\%$ on all five
meshes. The last column is the relative equilibration defect of step~4 of
Algorithm~\ref{alg:main}.}
\label{tab:real}
\end{table}


Three things come out of Table~\ref{tab:real} and Figure~\ref{fig:real}.
\begin{enumerate}[nosep]
  \item \textbf{The bound is informative, not vacuous.} On \texttt{aa7} the
        energy-norm error is certified to be at most $10.5\%$ of the energy of
        the computed solution; on \texttt{as6} at most $28.1\%$. These are
        numbers an engineer can act on, obtained without knowing $u$.
  \item \textbf{It ranks the \claudio{meshers}.} At $1983$ cells, \texttt{aa6} certifies
        $19.8\%$, whereas \texttt{as7} needs $3262$ cells --- $64\%$ more --- to
        certify a slightly worse $20.8\%$. For this problem the uniform \claudio{mesher}
        is the more efficient of the two, and the guaranteed bound says so
        without any reference computation. Comparing two \claudio{meshers} objectively on
        a geometry where the solution is unknown is, to our knowledge,
        not otherwise possible without a reference computation.
  \item \textbf{It localises the error where one expects it.}
        Figure~\ref{fig:real}(a,b) maps $\eta_K$: the indicator concentrates in
        the crossbar of the glyph, where the cells are elongated and where the
        two re-entrant corners sit. Panel~(c) confirms that the two \claudio{meshers}
        differ in quality --- the \texttt{aa} family is uniformly better in
        scaled Jacobian --- but that both stay in the comfortable range
        $s_K \geq 0.23$: these real meshes are far less distorted than
        the synthetic ones of Table~\ref{tab:main}, so that what they test is
        the geometry of the domain rather than the geometry of the cells.
\end{enumerate}

\begin{figure}[htbp]
\centering
\includegraphics[width=\textwidth]{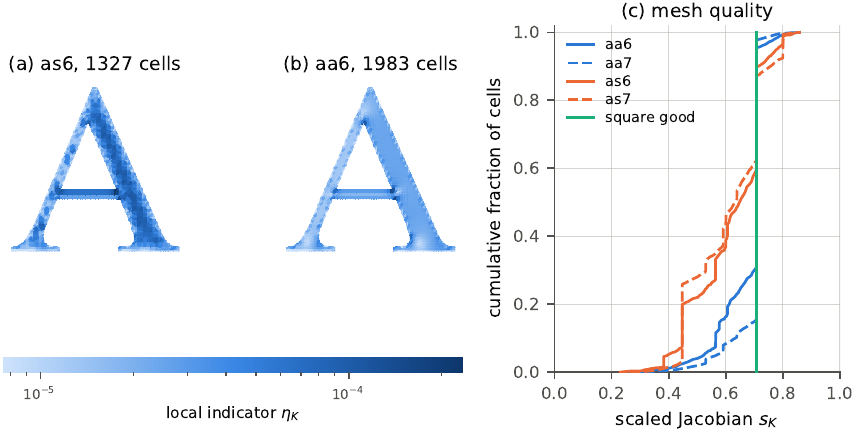}
\caption{The real meshes of family \ref{it:F6}. (a), (b) the local indicator
$\eta_K$ on the two coarse glyph meshes, on a common logarithmic colour scale;
the error concentrates in the crossbar and at the re-entrant corners. (c)
cumulative distribution of the scaled Jacobian: hue distinguishes the mesher,
dashing the refinement level.}
\label{fig:real}
\end{figure}

The patchwise reconstruction behaves on these meshes as it does on the
synthetic ones, returning a bound within $7\%$ of the global one
(Table~\ref{tab:real}) --- unsurprisingly, since by Figure~\ref{fig:real}(c)
these meshes contain no flattened cell of the kind that
Section~\ref{sec:numerics_pw} identified as its weak point. A quantity of
interest on this geometry --- say the mean of $u$ over the crossbar --- would be
certified by \eqref{eq:goalbound} at the price of one extra solve, exactly as in
Section~\ref{sec:numerics_goal}; we have not run it, since no reference value of
$J(u)$ exists here against which the resulting bound could be calibrated.

\section{Conclusion and perspectives}
\label{sec:conclusion}

We have argued, and demonstrated on $49$ numerical experiments, that
equilibrated flux estimators answer a question that practitioners actually ask
--- \emph{how wrong is my computation on this mesh?} --- in a regime where the
standard tools are silent: coarse meshes, distorted cells, no asymptotics. The
guarantee comes with a constant equal to one and, remarkably, with no geometric
assumption; the price is a possible loss of sharpness, which is the benign
failure mode.

The campaign of Section~\ref{sec:numerics} supports this rather sharply. An
isolated degenerate cell, needle or cap, with a scaled Jacobian of $5\cdot
10^{-5}$ and a maximum angle of $180$ degrees to printed precision, changes the
effectivity index of the equilibrated estimator by less than $10^{-3}$; global
anisotropy degrades it, but only from $1.03$ to $1.40$; and it never fell below
one. Over the same runs the Zienkiewicz--Zhu estimator was below one every
single time, reaching $0.41$ on an under-resolved boundary layer --- reporting
less than half of the error actually committed, in the very situation where a
warning is needed.

The comparison with Bank--Weiser is the most informative one, that
estimator being both accurate and widely used, so that the trade-off is visible
in the numbers. It is sharper than the equilibrated flux on every well-shaped mesh, by
roughly a factor of two in the overestimation, and sharper still on the
under-resolved layer. It is also the estimator that distortion punishes most: a
single needle cell out of $2048$ multiplies its index by $2.6$, global
anisotropy by $2.3$, a coefficient contrast of $10^4$ by $12$ --- each time
while the guaranteed index stays at $1.03$. Of these, the last is
removable: a coefficient-weighted facet split brings the $12$ back to $1.01$
(Section~\ref{sec:numerics_bw}). The first two are not, and the L-shaped domain
adds a failure that no calibration repairs, the estimator falling below the true
error on every mesh of that family while returning no indication that it has.
Sharpness and robustness are traded against each other here, and which one a
practitioner should buy depends on whether the mesh is one they chose.

Two findings were not anticipated when the campaign was designed, and
they are the ones we would single out. First, the two reconstructions of the
same guaranteed bound --- the global mixed one and the patchwise one --- lose
sharpness on disjoint families of meshes: the global one under global
anisotropy, the patchwise one under an accumulation of flattened cells inside a
single vertex patch. Since both are certificates, the sharper of the two may
simply be kept, and their minimum is again guaranteed. Second, no component of
the quality vector \eqref{eq:qualityvector} predicts the local effectivity
index: the largest correlation over the campaign is $\michel{0.48}$, and the sign of the
correlation with the shape-regularity ratio reverses between a needle and a cap
(Section~\ref{sec:numerics_corr}). A mesh-quality report and an error bound
answer different questions, and the second cannot be inferred from the first.

Open directions:
\begin{enumerate}[nosep]
  \item \textbf{Sharp characterisation of admissible distortion.} Following
        \cite{duprez2019damage}, find a necessary and sufficient condition on
        the damaged region for the effectivity index to stay bounded.
        Section~\ref{sec:numerics_pw} rules out the diameter of the
        damaged region as the relevant parameter and points instead to the
        number of flattened cells sharing a vertex patch; turning that
        observation into a bound on $C_{\mathrm{st}}$ is open.
  \item \michel{\textbf{Three dimensions.} Theorem~\ref{thm:upper} is written
        for $d \in \{2,3\}$ and the Payne--Weinberger constant does not depend
        on the dimension, so the guarantee transfers verbatim to tetrahedral
        meshes. The sharpness does not: in the experience of one of us with
        three-dimensional eddy-current problems, the lowest-order
        reconstruction in $\RT_0$ leaves the effectivity index well above one,
        and the $\RT_1$ reconstruction that brings it back is expensive.
        Quantifying this trade-off on distorted tetrahedra --- slivers, for
        which no two-dimensional analogue exists --- is the natural
        continuation of the present campaign, and the anisotropic
        interpolation theory of \cite{acosta2011} suggests it is not
        hopeless.}
  \item \textbf{Adaptivity on bad meshes.} The convergence and optimality theory
        of AFEM \cite{dorfler1996,cascon2008,nochetto2009} requires shape
        regularity; does D\"orfler marking still behave well in practice when
        the initial mesh is distorted and cannot be improved?
  \item \textbf{Beyond Poisson.} Elasticity (with the locking issue),
        Helmholtz \cite{chaumontfrelet2021}, eddy currents and magnetodynamics
        \cite{tang2013,creuse2017maxwell}; also the linearisation and algebraic
        error components in the spirit of \cite{ernvohralik2013}.
  \item \textbf{Extreme deformations.} Estimators in the X-MESH setting
        \cite{moes2023}, where elements are deliberately degenerated.
  \item \textbf{Meshless and immersed methods.} Since no mesh enters the proof
        of Theorem~\ref{thm:upper}, the framework should transfer.
  \item \textbf{Superconvergent postprocessing} \cite{muga2023} as a cheaper
        route to an equilibrated flux.
  \item \textbf{Non-homogeneous Dirichlet data.} The one hypothesis of
        Theorem~\ref{thm:upper} that \michel{neither the data nor the
        certificate can vouch for}
        (Remark~\ref{rem:nonhomog}). A computable bound on the boundary term,
        even a pessimistic one, would close the last gap between what is proved
        and what is run.
\end{enumerate}

\paragraph{Code and data availability.}
All the scripts, the input meshes and the raw result files needed to
regenerate every table and every figure of this paper are openly available at
\begin{center}
\url{https://github.com/michelduprez/A-posteriori-error-estimation-for-coarse-or-distorted-meshes}
\end{center}
The campaign runs in serial with \texttt{FEniCSx} (\texttt{dolfinx}~0.9.0); the
repository \texttt{README} documents the role of each file and the exact
commands used.

\paragraph{Acknowledgements.}
This work was partially supported by the regional programme
MATH-AmSud through the project NeuralFEM (2026--2027), which brings together
teams from France, Uruguay, Chile and Colombia. This research is partly funded by la Fédération de Recherche d'Animation Bézout CNRS.
We thank 
Roland Becker,
Marek Bucki,
St\'ephane Bordas,
Daniela Capatina,
Alexei Lozinski,
Robert Luce,
Pascal Omnes,
Yohan Payan,
Martin Vohralik
for highly stimulating discussions around this topic. 

\bibliographystyle{alpha}
\bibliography{biblio}

@article {muga2023,
    AUTHOR = {Muga, Ignacio and Rojas, Sergio and Vega, Patrick},
     TITLE = {An adaptive superconvergent mixed finite element method based
              on local residual minimization},
   JOURNAL = {SIAM J. Numer. Anal.},
  FJOURNAL = {SIAM Journal on Numerical Analysis},
    VOLUME = {61},
      YEAR = {2023},
    NUMBER = {5},
     PAGES = {2084--2105},
      ISSN = {0036-1429,1095-7170},
   MRCLASS = {65N30},
  MRNUMBER = {4641648},
       DOI = {10.1137/22M1526307},
       URL = {https://doi.org/10.1137/22M1526307},
}

@InCollection{carstensen2013aspects,
 Author = {Carstensen, C. and Merdon, C. and Neumann, Johannes},
 Title = {Aspects of guaranteed error control in {CPDEs}},
 BookTitle = {Numerical solution of partial differential equations: Theory, algorithms, and their applications. Papers based on the international symposium, Sozopol, Bulgaria, June 7--8, 2013. In Honor of Professor Raytcho Lazarov's 40 years of research in computational methods and applied mathematics and on the occasion of his 70th birthday},
 ISBN = {978-1-4614-7171-4; 978-1-4614-7172-1},
 Pages = {103--119},
 Year = {2013},
 Publisher = {New York, NY: Springer},
 Language = {English},
 DOI = {10.1007/978-1-4614-7172-1_6},
 zbMATH = {6207567},
 Zbl = {1275.65075}
}

@Article{carstensen2013,
 Author = {Carstensen, C. and Merdon, C.},
 Title = {Effective postprocessing for equilibration a posteriori error estimators},
 FJournal = {Numerische Mathematik},
 Journal = {Numer. Math.},
 ISSN = {0029-599X},
 Volume = {123},
 Number = {3},
 Pages = {425--459},
 Year = {2013},
 Language = {English},
 DOI = {10.1007/s00211-012-0494-4},
 zbMATH = {6141194},
 Zbl = {1267.65162}
}

@Article{veeser2009,
 Author = {Veeser, Andreas and Verf{\"u}rth, R{\"u}diger},
 Title = {Explicit upper bounds for dual norms of residuals},
 FJournal = {SIAM Journal on Numerical Analysis},
 Journal = {SIAM J. Numer. Anal.},
 ISSN = {0036-1429},
 Volume = {47},
 Number = {3},
 Pages = {2387--2405},
 Year = {2009},
 Language = {English},
 DOI = {10.1137/080738283},
 zbMATH = {5736099},
 Zbl = {1195.65158}
}

@Article{carstensen2014,
 Author = {Carstensen, C. and Merdon, C.},
 Title = {Refined fully explicit a posteriori residual-based error control},
 FJournal = {SIAM Journal on Numerical Analysis},
 Journal = {SIAM J. Numer. Anal.},
 ISSN = {0036-1429},
 Volume = {52},
 Number = {4},
 Pages = {1709--1728},
 Year = {2014},
 Language = {English},
 DOI = {10.1137/120896517},
 URL = {semanticscholar.org/paper/9460e8373bbeb8a08b3ecae49548ba4a105cdf24},
 zbMATH = {6365777},
 Zbl = {1309.65124}
}

@article {duprez2019damage,
    AUTHOR = {Duprez, Michel and Lleras, Vanessa and Lozinski, Alexei},
     TITLE = {Finite element method with local damage of the mesh},
   JOURNAL = {ESAIM Math. Model. Numer. Anal.},
  FJOURNAL = {ESAIM. Mathematical Modelling and Numerical Analysis},
    VOLUME = {53},
      YEAR = {2019},
    NUMBER = {6},
     PAGES = {1871--1891},
      ISSN = {2822-7840,2804-7214},
   MRCLASS = {65N30 (65N12 65N15)},
  MRNUMBER = {4019760},
MRREVIEWER = {Petr\ Knobloch},
       DOI = {10.1051/m2an/2019023},
       URL = {https://doi.org/10.1051/m2an/2019023},
}

@article {moes2023,
    AUTHOR = {Mo\"{e}s, Nicolas and Remacle, Jean-Fran\c{c}ois and
              Lambrechts, Jonathan and L\'{e}, Beno\^{i}t and Chevaugeon,
              Nicolas},
     TITLE = {The e{X}treme mesh deformation approach ({X}-{MESH}) for the
              {S}tefan phase change model},
   JOURNAL = {J. Comput. Phys.},
  FJOURNAL = {Journal of Computational Physics},
    VOLUME = {477},
      YEAR = {2023},
     PAGES = {Paper No. 111878, 20},
      ISSN = {0021-9991,1090-2716},
   MRCLASS = {65M50 (80A22)},
  MRNUMBER = {4541860},
       DOI = {10.1016/j.jcp.2022.111878},
       URL = {https://doi.org/10.1016/j.jcp.2022.111878},
}

@misc{vohralik_notes,
  author       = {Vohral{\'\i}k, Martin},
  title        = {A Posteriori Error Estimates for Efficiency and Error Control
                  in Numerical Simulations},
  howpublished = {Lecture notes, Course NPMA041, Charles University Prague /
                  Sorbonne Universit{\'e}},
  year         = {2020},
  url          = {https://who.rocq.inria.fr/Martin.Vohralik/Enseig/APost/a_posteriori.pdf},
}

@article{pragersynge1947,
  author  = {Prager, W. and Synge, J. L.},
  title   = {Approximations in elasticity based on the concept of function space},
  journal = {Quart. Appl. Math.},
  volume  = {5},
  year    = {1947},
  pages   = {241--269},
  doi     = {10.1090/qam/25902},
}

@article{braess2008,
  author  = {Braess, Dietrich and Sch{\"o}berl, Joachim},
  title   = {Equilibrated residual error estimator for edge elements},
  journal = {Math. Comp.},
  volume  = {77},
  number  = {262},
  year    = {2008},
  pages   = {651--672},
  doi     = {10.1090/S0025-5718-07-02080-7},
}

@article{braess2009,
  author  = {Braess, Dietrich and Pillwein, Veronika and Sch{\"o}berl, Joachim},
  title   = {Equilibrated residual error estimates are p-robust},
  journal = {Comput. Methods Appl. Mech. Engrg.},
  volume  = {198},
  number  = {13--14},
  year    = {2009},
  pages   = {1189--1197},
  doi     = {10.1016/j.cma.2008.12.010},
}

@article{ernvohralik2015,
  author  = {Ern, Alexandre and Vohral{\'\i}k, Martin},
  title   = {Polynomial-degree-robust a posteriori estimates in a unified setting
             for conforming, nonconforming, discontinuous {G}alerkin, and mixed
             discretizations},
  journal = {SIAM J. Numer. Anal.},
  volume  = {53},
  number  = {2},
  year    = {2015},
  pages   = {1058--1081},
  doi     = {10.1137/130950100},
}

@article{ernvohralik2020,
  author  = {Ern, Alexandre and Vohral{\'\i}k, Martin},
  title   = {Stable broken {$H^1$} and {$H(\mathrm{div})$} polynomial extensions for
             polynomial-degree-robust potential and flux reconstruction in three
             space dimensions},
  journal = {Math. Comp.},
  volume  = {89},
  number  = {322},
  year    = {2020},
  pages   = {551--594},
  doi     = {10.1090/mcom/3482},
}

@article{destuynder1999,
  author  = {Destuynder, Philippe and M{\'e}tivet, Brigitte},
  title   = {Explicit error bounds in a conforming finite element method},
  journal = {Math. Comp.},
  volume  = {68},
  number  = {228},
  year    = {1999},
  pages   = {1379--1396},
  doi     = {10.1090/S0025-5718-99-01093-5},
}

@book{ladeveze2004,
  author    = {Lad{\`e}v{\`e}ze, Pierre and Pelle, Jean-Pierre},
  title     = {Mastering Calculations in Linear and Nonlinear Mechanics},
  publisher = {Springer},
  year      = {2004},
}

@article{luce2004,
  author  = {Luce, Robert and Wohlmuth, Barbara I.},
  title   = {A local a posteriori error estimator based on equilibrated fluxes},
  journal = {SIAM J. Numer. Anal.},
  volume  = {42},
  number  = {4},
  year    = {2004},
  pages   = {1394--1414},
  doi     = {10.1137/S0036142903433790},
}

@article{becker2016,
  author  = {Becker, Roland and Capatina, Daniela and Luce, Robert},
  title   = {Local flux reconstructions for standard finite element methods on
             triangular meshes},
  journal = {SIAM J. Numer. Anal.},
  volume  = {54},
  number  = {4},
  year    = {2016},
  pages   = {2684--2706},
  doi     = {10.1137/16M1064817},
}

@article{creuse2017,
  author  = {Creus{\'e}, Emmanuel and Nicaise, Serge and Tang, Zuqi},
  title   = {Goal-oriented error estimation based on equilibrated flux and potential
             reconstruction for the approximation of elliptic and parabolic problems},
  journal = {Comput. Math. Appl.},
  volume  = {146},
  year    = {2023},
  pages   = {323--338},
  doi     = {10.1016/j.camwa.2023.07.008},
}

@article{repin2000,
  author  = {Repin, Sergey I.},
  title   = {A posteriori error estimation for variational problems with uniformly
             convex functionals},
  journal = {Math. Comp.},
  volume  = {69},
  number  = {230},
  year    = {2000},
  pages   = {481--500},
  doi     = {10.1090/S0025-5718-99-01190-4},
}

@book{han2005,
  author    = {Han, Weimin},
  title     = {A Posteriori Error Analysis via Duality Theory. With Applications
               in Modeling and Numerical Approximations},
  series    = {Advances in Mechanics and Mathematics},
  volume    = {8},
  publisher = {Springer},
  address   = {New York},
  year      = {2005},
  doi       = {10.1007/b101775},
}

@article{bartels2023,
  author  = {Bartels, S{\"o}ren and Kaltenbach, Alex},
  title   = {Explicit and efficient error estimation for convex minimization
             problems},
  journal = {Math. Comp.},
  volume  = {92},
  number  = {343},
  year    = {2023},
  pages   = {2247--2279},
  doi     = {10.1090/mcom/3821},
}

@book{repin2008,
  author    = {Repin, Sergey I.},
  title     = {A Posteriori Estimates for Partial Differential Equations},
  publisher = {Walter de Gruyter},
  address   = {Berlin},
  year      = {2008},
}

@book{verfurth2013,
  author    = {Verf{\"u}rth, R{\"u}diger},
  title     = {A Posteriori Error Estimation Techniques for Finite Element Methods},
  publisher = {Oxford University Press},
  year      = {2013},
}

@book{ainsworthoden2000,
  author    = {Ainsworth, Mark and Oden, J. Tinsley},
  title     = {A Posteriori Error Estimation in Finite Element Analysis},
  publisher = {Wiley},
  year      = {2000},
}

@article{zz1987,
  author  = {Zienkiewicz, O. C. and Zhu, J. Z.},
  title   = {A simple error estimator and adaptive procedure for practical
             engineering analysis},
  journal = {Internat. J. Numer. Methods Engrg.},
  volume  = {24},
  number  = {2},
  year    = {1987},
  pages   = {337--357},
  doi     = {10.1002/nme.1620240206},
}

@article{bankweiser1985,
  author  = {Bank, Randolph E. and Weiser, Alan},
  title   = {Some a posteriori error estimators for elliptic partial differential
             equations},
  journal = {Math. Comp.},
  volume  = {44},
  number  = {170},
  year    = {1985},
  pages   = {283--301},
  doi     = {10.1090/S0025-5718-1985-0777265-X},
}

@article{bulle2023,
  author  = {Bulle, Rapha{\"e}l and Hale, Jack S. and Lozinski, Alexei and
             Bordas, St{\'e}phane P. A. and Chouly, Franz},
  title   = {Hierarchical a posteriori error estimation of {B}ank--{W}eiser type in
             the {FEniCS} {P}roject},
  journal = {Comput. Math. Appl.},
  volume  = {131},
  year    = {2023},
  pages   = {103--123},
  doi     = {10.1016/j.camwa.2022.12.003},
}

@article{payneweinberger1960,
  author  = {Payne, L. E. and Weinberger, H. F.},
  title   = {An optimal {P}oincar{\'e} inequality for convex domains},
  journal = {Arch. Rational Mech. Anal.},
  volume  = {5},
  year    = {1960},
  pages   = {286--292},
  doi     = {10.1007/BF00252910},
}

@article{knupp2001,
  author  = {Knupp, Patrick M.},
  title   = {Algebraic mesh quality metrics},
  journal = {SIAM J. Sci. Comput.},
  volume  = {23},
  number  = {1},
  year    = {2001},
  pages   = {193--218},
  doi     = {10.1137/S1064827500371499},
}

@article{babuskaaziz1976,
  author  = {Babu{\v{s}}ka, Ivo and Aziz, A. K.},
  title   = {On the angle condition in the finite element method},
  journal = {SIAM J. Numer. Anal.},
  volume  = {13},
  number  = {2},
  year    = {1976},
  pages   = {214--226},
  doi     = {10.1137/0713021},
}

@article{krizek1992,
  author  = {K{\v{r}}{\'\i}{\v{z}}ek, Michal},
  title   = {On the maximum angle condition for linear tetrahedral elements},
  journal = {SIAM J. Numer. Anal.},
  volume  = {29},
  number  = {2},
  year    = {1992},
  pages   = {513--520},
  doi     = {10.1137/0729031},
}

@article{apel1999,
  author    = {Apel, Thomas},
  title     = {Anisotropic Finite Elements: Local Estimates and Applications},
  publisher = {Teubner},
  year      = {1999},
  journal   = {Advances in Numerical Mathematics},
}

@article{formaggia2001,
  author  = {Formaggia, Luca and Perotto, Simona},
  title   = {New anisotropic a priori error estimates},
  journal = {Numer. Math.},
  volume  = {89},
  number  = {4},
  year    = {2001},
  pages   = {641--667},
  doi     = {10.1007/s002110100273},
}

@article{picasso2003,
  author  = {Picasso, Marco},
  title   = {An anisotropic error indicator based on {Z}ienkiewicz--{Z}hu error
             estimator: application to elliptic and parabolic problems},
  journal = {SIAM J. Sci. Comput.},
  volume  = {24},
  number  = {4},
  year    = {2003},
  pages   = {1328--1355},
  doi     = {10.1137/S1064827501398578},
}

@article{shewchuk2002,
  author  = {Shewchuk, Jonathan Richard},
  title   = {What is a good linear finite element? {I}nterpolation, conditioning,
             anisotropy, and quality measures},
  journal = {Preprint, University of California at Berkeley},
  year    = {2002},
}

@article{bucki2010,
  author  = {Bucki, Marek and Lobos, Claudio and Payan, Yohan},
  title   = {A fast and robust patient specific finite element mesh registration
             technique: Application to 60 clinical cases},
  journal = {Med. Image Anal.},
  volume  = {14},
  number  = {3},
  year    = {2010},
  pages   = {303--317},
  doi     = {10.1016/j.media.2010.02.003},
}

@article{lobos2013,
  author  = {Lobos, Claudio},
  title   = {A set of mixed-elements patterns for domain boundary approximation in
             hexahedral meshes},
  journal = {Stud. Health Technol. Inform.},
  volume  = {184},
  year    = {2013},
  pages   = {268--274},
}

@article{chaumontfrelet2021,
  author  = {Chaumont-Frelet, Th{\'e}ophile and Ern, Alexandre and Vohral{\'\i}k, Martin},
  title   = {On the derivation of guaranteed and $p$-robust a posteriori error
             estimates for the {H}elmholtz equation},
  journal = {Numer. Math.},
  volume  = {148},
  number  = {3},
  year    = {2021},
  pages   = {525--573},
  doi     = {10.1007/s00211-021-01192-w},
}

@book{ernguermond2021,
  author    = {Ern, Alexandre and Guermond, Jean-Luc},
  title     = {Finite Elements {I}: Approximation and Interpolation},
  publisher = {Springer},
  year      = {2021},
}

@misc{blechta_dolfintape,
  author       = {Blechta, Jan},
  title        = {{dolfin-tape}: {DOLFIN} tools for a posteriori error
                  estimation},
  howpublished = {Software repository},
  year         = {2016},
  url          = {https://github.com/blechta/dolfin-tape},
}

@article{babuskarheinboldt1978,
  author  = {Babu{\v{s}}ka, Ivo and Rheinboldt, Werner C.},
  title   = {Error estimates for adaptive finite element computations},
  journal = {SIAM J. Numer. Anal.},
  volume  = {15},
  number  = {4},
  year    = {1978},
  pages   = {736--754},
  doi     = {10.1137/0715049},
}

@article{ladeveze1983,
  author  = {Lad{\`e}v{\`e}ze, Pierre and Leguillon, Dominique},
  title   = {Error estimate procedure in the finite element method and
             applications},
  journal = {SIAM J. Numer. Anal.},
  volume  = {20},
  number  = {3},
  year    = {1983},
  pages   = {485--509},
  doi     = {10.1137/0720033},
}

@book{braess2007,
  author    = {Braess, Dietrich},
  title     = {Finite Elements: Theory, Fast Solvers and Applications in Solid
               Mechanics},
  edition   = {3},
  publisher = {Cambridge University Press},
  year      = {2007},
}

@incollection{raviartthomas1977,
  author    = {Raviart, P.-A. and Thomas, J. M.},
  title     = {A mixed finite element method for 2nd order elliptic problems},
  booktitle = {Mathematical Aspects of Finite Element Methods},
  series    = {Lecture Notes in Math.},
  volume    = {606},
  publisher = {Springer},
  year      = {1977},
  pages     = {292--315},
  doi       = {10.1007/BFb0064470},
}

@article{bdm1985,
  author  = {Brezzi, Franco and Douglas, Jim and Marini, L. Donatella},
  title   = {Two families of mixed finite elements for second order elliptic
             problems},
  journal = {Numer. Math.},
  volume  = {47},
  number  = {2},
  year    = {1985},
  pages   = {217--235},
  doi     = {10.1007/BF01389710},
}

@book{boffibrezzifortin2013,
  author    = {Boffi, Daniele and Brezzi, Franco and Fortin, Michel},
  title     = {Mixed Finite Element Methods and Applications},
  series    = {Springer Series in Computational Mathematics},
  volume    = {44},
  publisher = {Springer},
  year      = {2013},
  doi       = {10.1007/978-3-642-36519-5},
}

@article{clement1975,
  author  = {Cl{\'e}ment, Philippe},
  title   = {Approximation by finite element functions using local
             regularization},
  journal = {Rev. Fran{\c{c}}aise Automat. Informat. Recherche Op{\'e}rationnelle
             S{\'e}r. Rouge Anal. Num{\'e}r.},
  volume  = {9},
  number  = {R-2},
  year    = {1975},
  pages   = {77--84},
}

@article{scottzhang1990,
  author  = {Scott, L. Ridgway and Zhang, Shangyou},
  title   = {Finite element interpolation of nonsmooth functions satisfying
             boundary conditions},
  journal = {Math. Comp.},
  volume  = {54},
  number  = {190},
  year    = {1990},
  pages   = {483--493},
  doi     = {10.1090/S0025-5718-1990-1011446-7},
}

@article{zz1992a,
  author  = {Zienkiewicz, O. C. and Zhu, J. Z.},
  title   = {The superconvergent patch recovery and a posteriori error estimates.
             {P}art 1: {T}he recovery technique},
  journal = {Internat. J. Numer. Methods Engrg.},
  volume  = {33},
  number  = {7},
  year    = {1992},
  pages   = {1331--1364},
  doi     = {10.1002/nme.1620330702},
}

@article{zz1992b,
  author  = {Zienkiewicz, O. C. and Zhu, J. Z.},
  title   = {The superconvergent patch recovery and a posteriori error estimates.
             {P}art 2: {E}rror estimates and adaptivity},
  journal = {Internat. J. Numer. Methods Engrg.},
  volume  = {33},
  number  = {7},
  year    = {1992},
  pages   = {1365--1382},
  doi     = {10.1002/nme.1620330703},
}

@article{dorfler1996,
  author  = {D{\"o}rfler, Willy},
  title   = {A convergent adaptive algorithm for {P}oisson's equation},
  journal = {SIAM J. Numer. Anal.},
  volume  = {33},
  number  = {3},
  year    = {1996},
  pages   = {1106--1124},
  doi     = {10.1137/0733054},
}

@article{cascon2008,
  author  = {Casc{\'o}n, J. Manuel and Kreuzer, Christian and
             Nochetto, Ricardo H. and Siebert, Kunibert G.},
  title   = {Quasi-optimal convergence rate for an adaptive finite element
             method},
  journal = {SIAM J. Numer. Anal.},
  volume  = {46},
  number  = {5},
  year    = {2008},
  pages   = {2524--2550},
  doi     = {10.1137/07069047X},
}

@incollection{nochetto2009,
  author    = {Nochetto, Ricardo H. and Siebert, Kunibert G. and Veeser, Andreas},
  title     = {Theory of adaptive finite element methods: an introduction},
  booktitle = {Multiscale, Nonlinear and Adaptive Approximation},
  publisher = {Springer},
  year      = {2009},
  pages     = {409--542},
  doi       = {10.1007/978-3-642-03413-8_12},
}

@article{ernvohralik2013,
  author  = {Ern, Alexandre and Vohral{\'\i}k, Martin},
  title   = {Adaptive inexact {N}ewton methods with a posteriori stopping
             criteria for nonlinear diffusion {PDE}s},
  journal = {SIAM J. Sci. Comput.},
  volume  = {35},
  number  = {4},
  year    = {2013},
  pages   = {A1761--A1791},
  doi     = {10.1137/120896918},
}

@book{freygeorge2008,
  author    = {Frey, Pascal J. and George, Paul-Louis},
  title     = {Mesh Generation: Application to Finite Elements},
  edition   = {2},
  publisher = {ISTE/Wiley},
  year      = {2008},
}

@article{acosta2011,
  author  = {Acosta, Gabriel and Apel, Thomas and Dur{\'a}n, Ricardo G. and
             Lombardi, Ariel L.},
  title   = {Error estimates for {R}aviart--{T}homas interpolation of any order
             on anisotropic tetrahedra},
  journal = {Math. Comp.},
  volume  = {80},
  number  = {273},
  year    = {2011},
  pages   = {141--163},
  doi     = {10.1090/S0025-5718-2010-02406-8},
}

@article{tang2013,
  author  = {Tang, Zuqi and Le Menach, Yvonnick and Creus{\'e}, Emmanuel and
             Nicaise, Serge and Piriou, Francis and Siauve, Noel},
  title   = {Residual based a posteriori error estimators for harmonic
             {$A/\varphi$} and {$T/\Omega$} formulations in eddy current
             problems},
  journal = {IEEE Trans. Magn.},
  volume  = {49},
  number  = {5},
  year    = {2013},
  pages   = {1721--1724},
  doi     = {10.1109/TMAG.2013.2241412},
}

@article{creuse2017maxwell,
  author  = {Creus{\'e}, Emmanuel and Nicaise, Serge and Tittarelli, Roberta},
  title   = {A guaranteed equilibrated error estimator for the
             {$\mathbf{A}$--$\varphi$} and {$\mathbf{T}$--$\Omega$}
             magnetodynamic harmonic formulations of the {M}axwell system},
  journal = {IMA J. Numer. Anal.},
  volume  = {37},
  number  = {2},
  year    = {2017},
  pages   = {750--773},
  doi     = {10.1093/imanum/drw026},
}

@incollection{bernardi2001,
  author    = {Bernardi, Christine and M{\'e}tivet, Brigitte and
               Verf{\"u}rth, R{\"u}diger},
  title     = {Analyse num{\'e}rique d'indicateurs d'erreur},
  booktitle = {Maillage et adaptation},
  editor    = {George, Paul-Louis},
  publisher = {Herm{\`e}s},
  address   = {Paris},
  year      = {2001},
  note      = {TO BE CHECKED},
}

@article{caizhang2012,
  author  = {Cai, Zhiqiang and Zhang, Shun},
  title   = {Robust equilibrated residual error estimator for diffusion
             problems: conforming elements},
  journal = {SIAM J. Numer. Anal.},
  volume  = {50},
  number  = {1},
  year    = {2012},
  pages   = {151--170},
  doi     = {10.1137/100803857},
}



\end{document}